\newcommand\R{\mathbb{R}}
\newcommand\C{\mathbb{C}}
\newcommand\Z{\mathbb{Z}}
\newcommand\A{{\bf A}}
\newcommand\LL{\mathcal{L}}
\numberwithin{equation}{section}
\newtheorem{proposition}{Proposition}[section]
\newtheorem{lemma}{Lemma}[section]
\newtheorem{theorem}{Theorem}[section]
\newtheorem{remark}{Remark}[section]
\keywords{Bochner-Riesz means, Magnetic Schr\"odinger operators, Aharonov-Bohm potential}
\subjclass[2020]{42B99; 42C10; 58C40}
\date{\today}
\begin{document}
\title[Bochner-Riesz means for critical magnetic Schr\"odinger operators]{Bochner-Riesz means  for critical magnetic Schr\"odinger operators in ${\mathbb R^2}$}

\author{Changxing Miao}
\address{Institute of Applied Physics and Computational Mathematics, Beijing 100088, P.R.
CHINA}
\email{miao\_changxing@iapcm.ac.cn}

\author{Lixin Yan}
\address{Department of Mathematics, Sun Yat-Sen University, Guangzhou, 510275, P.R.
CHINA}
\email{mcsylx@mail.sysu.edu.cn}

\author{Junyong Zhang}
\address{Department of Mathematics, Beijing Institute of Technology, Beijing 100081, P.R.
CHINA}
\email{zhang\_junyong@bit.edu.cn}

\begin{abstract}
We study $L^p$-boundedness of the  Bochner–Riesz means
for critical magnetic Schr\"odinger operators $\LL_{\A}$ in ${\mathbb R^2}$, which involve the physcial Aharonov-Bohm potential.   We show that
for $1\leq p\leq +\infty$ and $p\not= 2$, 
the Bochner–Riesz operator   $S_{\lambda}^\delta(\LL_{\A})$ of order $\delta$ is bounded on $L^p(\R^2)$ if and only if  $\delta>\max\big\{0, 2\big|1/2-1/p\big|-1/2\big\}$. The new ingredient  of the proof is
to obtain the localized $L^4(\R^2)$ estimate of $S_{\lambda}^\delta(\LL_{\A})$, whose kernel  is heavily affected by the physical magnetic diffraction,   and more singular than  the classical Bochner-Riesz  means $S_{\lambda}^\delta(\Delta)$ for the   Laplacian $\Delta$ in ${\mathbb R}^2$.

\end{abstract}


\maketitle 

\section{Introduction}
Consider a scaling invariant magnetic Schr\"odinger operator of the form
\begin{equation}\label{LA}
	\mathcal{L}_{{\A}}=-\left(\nabla+i\frac{{\A}(\hat{x})}{|x|}\right)^2,\quad x\in \R^2\setminus\{0\},
\end{equation}
where $\hat{x}=\tfrac{x}{|x|}\in\mathbb{S}^1$ and ${\A}\in W^{1,\infty}(\mathbb{S}^1;\R^2)$  satisfies the transversality condition
\begin{equation}\label{eq:transversal}
	{\A}(\hat{x})\cdot\hat{x}=0,
	\qquad
	\text{for all }x\in\R^2.
\end{equation}
One of main examples is
the {\it Aharonov-Bohm} potential
\begin{equation}\label{ab-potential}
	{\A}(\hat{x})=\alpha\left(-\frac{x_2}{|x|},\frac{x_1}{|x|}\right),\quad \alpha\in\R,
\end{equation}
introduced in \cite{AB}, in the context of Schr\"odinger dynamics, to show that scattering effects can even occur in regions in which the electromagnetic field is absent.
It is well known that the Aharonov-Bohm (AB) effect  plays an important role in quantum theory refining the status of electromagnetic potentials (see \cite{AB}). 
This effect describes the interaction between a non-relativistic charged particle and an infinitely long and infinitesimally thin magnetic solenoid field (further AB field). We refer to Avron-Herbest-Simon \cite{AHS1,AHS2, AHS3} and Reed-Simon \cite{RS} for electromagnetic phenomena playing role in quantum mechanics,
in which many important physical potentials (e.g. the constant magnetic field and the Coulomb electric potential) are discussed. The magnetic potential is scaling critical so that the perturbation is completely non-trivial. From the points of mathematics, on one hand, the singularity of the Aharonov-Bohm potential is not locally in $L^2$, which makes the analysis of this operator difficult, compared to the usual theory for magnetic Schr\"odinger Hamiltonians (see \cite{RS04}). On the other hand, the scaling invariance helps from the point of view of Fourier analysis and permits to perform explicit calculations for generalized eigenfunctions, as we shall see in the sequel.\vspace{0.1cm}

We recently started programs about harmonic analysis involving the above conical singular operators. In \cite{FZZ, FZZ1}, 
  Fanelli, Zheng and the   third named author  proved Strichartz estimates for the wave equation and 
the $L^p-L^q$ resolvent estimates; and later in \cite{GYZZ}, together with Gao, Yin and Zheng,  the   third named author   showed the Strichartz estimates for the Klein-Gordon equation by constructing the spectral measure of the operator $\mathcal{L}_{{\A}}$. We 
studied $L^2$-weighted resolvent estimates in \cite{GWZZ}. Some of interesting physical phenomenons such as diffraction and resonance are found to  play roles in establishing harmonic analysis theory associated with the above conical singular operators.  \vspace{0.1cm}

In the present paper, we focus on purely magnetic potentials that are critical with respect to the scaling of the free operator. Many studies on the harmonic analysis problems (e.g. resolvent estimates, Strichartz estimates) of the Schr\"odinger operators with critical potentials become harder because the usual perturbation argument break down. We refer to 
\cite{BVZ, CK16} for the resolvent estimates, in which they needed that the potential decays faster than the critical one at infinity. We refer to \cite{CS,DF, DFVV, EGS1, EGS2, S} and the references therein for time-decay or Strichartz estimates with subcritical magnetic potentials. However, the problems with critical magnetic potentials are more delicate and even remain open in high dimension.  The space dimension $n=2$ is very peculiar for this kind of problems, since the associated spherical problem is one-dimensional, several explicit expansions are available, leading to quite complete results.  For examples, the authors in\cite{FFFP1,FFFP} proved the time-decay estimate
for the Schr\"odinger equation; in \cite{FZZ, GYZZ} proved the Strichartz estimates for the wave and Klein-Gordon equation. \vspace{0.2cm}

The purpose of this paper is to investigate 
  the Bochner-Riesz operator  $S_{\lambda}^\delta(\LL_{\A}): = (1- {\LL_{\A}}/{\lambda^2} )^\delta_{+}$ of order $\delta$ associated with a scaling critical magnetic Schr\"odinger operator $\mathcal L_{\A}$ in ${\mathbb R^2}$, which is defined by, for $\delta\geq 0$ and $\lambda>0$,
\begin{eqnarray}\label{oper:BR}
S_{\lambda}^\delta(\LL_{\A})
=\int_0^{\lambda} \Big(1-\frac{\rho^2}{\lambda^2}\Big)^\delta  dE_{\sqrt{\LL_{{\A}}}} (\rho), 
\end{eqnarray}
where $E_{\sqrt{\LL_{{\A}}}} (\rho) $ is a spectral resolution of $\sqrt{\LL_{{\A}}}$. By the spectral theorem,  the operator $S_{\lambda}^\delta(\LL_{\A})$ is bounded on $L^2({\mathbb R^2})$.
The problem known as the Bochner-Riesz problem to determine the optimal index $\delta$ for $1\leq p\leq +\infty$ such that 
$S_{\lambda}^\delta(\LL_{\A})f$ converges to $f$ in $L^p({\mathbb R^2})$ for every $f\in L^p({\mathbb R^2})$.
This kind of problem was initially considered for the Laplacian $ \Delta=-\sum_{i=1}^n \partial^2_{x_i}$
on the Euclidean space $\R^n$ with $n\geq 2$, and the problem has been extensively studied by numerous authors.
It has been  conjectured    that the classical Bochner–Riesz mean $S_{\lambda}^\delta(\Delta)f$ converges
in $L^p(\R^n)$ if and only if
\begin{equation}\label{delta}
\delta>\delta_c(p,n):=\max\left\{0, n\big|\frac12-\frac1p\big|-\frac12\right\}
\end{equation}
for $ p\in [1,+\infty]\setminus\{2\}$.
When $p = 2$, the convergence holds true if and only if $\delta\geq 0$ by
Plancherel’s theorem. In two dimensions, the conjecture has been proved by L. Carleson and P. Sj\"olin \cite{CS} by means of a more general theorem on oscillatory integral operators.  For a variant of their proof, we refer to L. H\"ormander\cite{Hor},  C. Fefferman \cite{Fef2}  and A. Cordoba \cite{Cor}. In higher dimensions, only partial results are known, see e.g. \cite{Bour, BG, GOWWZ, Lee, Stein, Ta3}  and references therein.\vspace{0.1cm}

The Bochner-Riesz   means  for elliptic operators   (e.g. Schr\"odinger operator with potentials, the Laplacian on manifolds) have attracted a lot of attention and have been studied extensively by many authors, see 
 \cite{COSY, CSo, DOS, GHS, KST, H2, JLR1, JLR, LR, Sogge87, T2} and reference therein. In particular, we refer to Sogge \cite{Sogge87}, Christ-Sogge \cite{CSo} for the Bochner-Riesz means on compact manifolds and
 Guillarmou-Hassell-Sikora \cite{GHS} for the Bochner-Riesz means on the non-trapping asymptotically conic manifold. Recently, 
Lee and Ryu \cite{LR} studied the Bochner-Riesz means problem for Schr\"odinger operator with Hermite operator, and later Jeong, Lee and Ryu \cite{JLR} for the twisted Laplacian in $\R^2$. 
However, the picture of the Bochner-Riesz  problem associated with elliptic operators is far to be completed. Motivated by this, we 
study the Bochner-Riesz problem of  variable coefficient Schr\"odinger operators with a scaling critical magnetic potential. \vspace{0.1cm}

Our main result is the following.

\begin{theorem}\label{thm:LA0} Let $1\leq p\leq +\infty$ and let 
$S_{\lambda}^\delta(\LL_{\A})$ be the Bochner-Riesz means defined by
\eqref{LA} and \eqref{oper:BR}.
Then, there exists  a constant $C$ independent of $\lambda>0$ such that 
\begin{equation}\label{est:BR}
\|S_{\lambda}^\delta(\LL_{\A})f\|_{L^p(\R^2)}\leq C\|f\|_{L^p(\R^2)},
\end{equation}
if and only if  $\delta>\delta_c(p,2)=\max\big\{0, 2\big|1/2-1/p\big|-1/2\big\}$ when $p\not=2$.
\end{theorem}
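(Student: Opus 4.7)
The plan is to first exploit the scaling invariance of $\LL_{\A}$: since $\LL_{\A}$ is homogeneous of degree $-2$, the change of variable $x\mapsto \lambda x$ identifies $S_\lambda^\delta(\LL_{\A})$ with $S_1^\delta(\LL_{\A})$ modulo an $L^p$-isometric dilation, reducing Theorem \ref{thm:LA0} to proving \eqref{est:BR} at $\lambda=1$ with a constant independent of $\lambda$. Combined with the $L^2$-bound from the spectral theorem and duality, Riesz--Thorin interpolation further reduces the sufficiency half to two endpoint estimates: an $L^1\to L^1$ bound for $S_1^\delta(\LL_{\A})$ valid for every $\delta>1/2$, and an $L^4\to L^4$ bound valid for every $\delta>0$. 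Together with duality and interpolation with $L^2$, these two yield the whole range $\delta>\delta_c(p,2)$.

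To obtain both endpoint bounds I would analyze the integral kernel $K_\delta(x,y)$ of $S_1^\delta(\LL_{\A})$ via the explicit spectral resolution of $\sqrt{\LL_{\A}}$ constructed in \cite{FZZ,GYZZ}. In polar coordinates, the generalized eigenfunctions decompose into angular modes indexed by $k\in\Z$ whose radial parts are Bessel functions $J_{\nu_k}$ of orders $\nu_k=|k+\alpha|$ shifted by the Aharonov--Bohm flux. Summing the resulting series against a Schl\"afli--Sommerfeld integral representation yields a decomposition $K_\delta=K_\delta^{\mathrm{geom}}+K_\delta^{\mathrm{diff}}$, where $K_\delta^{\mathrm{geom}}$ is a Carleson--Sj\"olin-type kernel analogous to that of $S_1^\delta(\Delta)$, while $K_\delta^{\mathrm{diff}}$ is the diffracted correction produced by the singular interaction with the solenoid and has no counterpart in the Laplacian case.

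The $L^1$ endpoint then follows from pointwise estimates and Schur's test: stationary phase gives $K_\delta^{\mathrm{geom}}(x,y)=O((1+|x-y|)^{-3/2-\delta})$, while the vanishing of the factor $(1-\rho^2)^\delta$ at the critical sphere, combined with oscillation in the angular variable, provides enough integrability of $K_\delta^{\mathrm{diff}}$ once $\delta>1/2$. The necessity of $\delta>\delta_c(p,2)$ can be obtained from a Knapp-type example adapted to the spectrum of $\LL_{\A}$: test $S_1^\delta$ on a function whose generalized Fourier data concentrates in a $\delta_1\times \delta_1^2$ cap on the spectral circle $\{\rho=1\}$, compute the resulting $L^p$-norms using Bessel asymptotics, and optimize in $\delta_1$ to recover the threshold.

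The hard part is the $L^4$ bound for $\delta>0$, which is the critical two-dimensional endpoint. For the flat Laplacian this is precisely the Carleson--Sj\"olin oscillatory-integral theorem, and the same argument applies directly to $K_\delta^{\mathrm{geom}}$. The difficulty is $K_\delta^{\mathrm{diff}}$: it encodes a diffractive wave emanating from the origin, is more singular than the geometric part, and its phase does not verify the Carleson--Sj\"olin non-degeneracy hypothesis uniformly. My plan is to localize dyadically in $|x-y|$, exploit orthogonality of the angular modes to obtain an $L^2\to L^2$ bound on $K_\delta^{\mathrm{diff}}$ with good constants, and combine it via complex interpolation with an improved $L^\infty$-bound obtained by integration by parts in the angular variable that uses the vanishing of $(1-\rho^2)^\delta$ on the critical sphere. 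Following the strategies developed for related diffractive settings in \cite{JLR,GHS}, this should produce the localized $L^4$ estimate for every $\delta>0$ and complete the proof.
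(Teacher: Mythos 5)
Your overall architecture (scaling to $\lambda=1$, kernel splitting into a geometric Carleson--Sj\"olin-type piece and a diffracted piece coming from the Aharonov--Bohm interaction, dyadic localization, interpolation/duality) matches the paper, and your necessity argument via a Knapp example is a plausible, though more laborious, alternative to the paper's one-line appeal to the Kenig--Stanton--Tomas transplantation theorem. The genuine gap is in your treatment of the crucial $L^4$ endpoint for the diffracted kernel. Your plan is to localize at scale $|x-y|\sim 2^{j}$, get an $L^2\to L^2$ bound from angular-mode orthogonality, and combine it ``via complex interpolation with an improved $L^\infty$-bound.'' Quantitatively this cannot reach the sharp threshold: at scale $2^j$ the localized pieces satisfy $\|T^j\|_{L^2\to L^2}\lesssim 2^{-j\delta}$ and (at best) $\|T^j\|_{L^\infty\to L^\infty}\lesssim 2^{j(\frac12-\delta)}$, and interpolating these gives $\|T^j\|_{L^4\to L^4}\lesssim 2^{j(\frac14-\delta)}$, which sums only for $\delta>1/4$, not for all $\delta>0$. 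This is exactly the classical obstruction in two dimensions: linear $L^2$--$L^\infty$ interpolation is sharp only for $p\ge 6$ (the line $q=3p'$), and reaching $p=4$ requires a genuinely bilinear/square-function mechanism. The paper does this by a Cordoba-style $TT^*$ argument for the geometric part (estimating the kernel of $T^j\overline{T^j}$ with a Van der Corput lower bound on a $2\times2$ Hessian-type determinant, Lemma \ref{lem:key1} and Lemma \ref{lem:keyker}), and for the diffracted part by an angular Littlewood--Paley analysis of the most singular piece $H$ (Proposition \ref{prop:ker-est3H}), which is sharp for $p\ge 6$ and carries an $\epsilon'$-loss for $4\le p<6$ that is still summable since $\delta>\delta_c(p,2)$ there. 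Without some substitute for this bilinear step, your outline does not prove the theorem in the range $4\le p<6$ (equivalently near the critical $L^4$ endpoint).

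A second, smaller gap: you assert that the Carleson--Sj\"olin argument ``applies directly'' to the geometric kernel, but that kernel carries the factor $A_\alpha(\theta_1,\theta_2)$, which contains the Heaviside jump $\mathbbm{1}_{[0,\pi]}(|\theta_1-\theta_2|)$ produced by the long-range Aharonov--Bohm phase; the kernel is therefore not a smooth oscillatory-integral kernel and the standard theorem does not apply off the shelf. The paper spends most of Section \ref{sec:TGj} (Case 3, $\theta_1-\theta_2$ near $\pi$) removing this jump inside the $TT^*$ kernel before Van der Corput can be used; your proposal should acknowledge and handle this discontinuity, since it is precisely where the magnetic diffraction makes the problem harder than the flat case.
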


The necessary part of \eqref{est:BR} follows by a transplantation theorem due to Kenig-Stanton-Tomas \cite{KST} and the necessary condition for $L^p$ bound on the classcial Bochner-Riesz means for $S_{\lambda}^\delta(\Delta)$.
If $p=2$, one can use the spectral theorem to prove that \eqref{est:BR} holds for $\delta\geq 0$. 

To show the sufficiency part of \eqref{est:BR}, 
we note that the magnetic potential considered here is singular at original and decays like the Coulomb potential at infinity, which is quite different from the harmonic oscillator potential in \cite{LR} and the constant magnetic field in \cite{JLR}. The potentials considered in their papers belong to $L^2_{\text{loc}}$ and but unbounded at infinity, so that the Schr\"odinger operators only have pure point spectrum and have a nice 
representation of the kernel due to the Mehler formula.  Our magnetic potential is too singular at original so that it is not in $L^2_{\text{loc}}$, while it decays at infinity not too fast which leads to a difficult treated long-range perturbation.
The proof of the sufficiency part of \eqref{est:BR} can be proved by making use the ideas of the celebrated oscillatory integral theory of Stein \cite{Stein} and H\"ormander \cite{Hor} to adapt to the singular kernel, which is so-called variable coefficient Plancherel theorem in T. H. Wolff \cite{Wolff}. But we have to overcome the difficulties caused by the singularity of the potential and the Aharonov-Bohm effect of the operator $\LL_{\A}$.
Now let us figure out some key points in our proof.\vspace{0.2cm}

\begin{itemize}

\item The magnetic Aharonov-Bohm effect is a quantum mechanical phenomenon in which
an electrically charged particle is affected by an electromagnetic potential $\frac{{\A}(\hat{x})}{|x|}$ in \eqref{ab-potential}, despite being
confined to a region in which the magnetic field $${\bf B}(x)=\nabla \times \frac{{\A}(\hat{x})}{|x|}=\alpha\delta_0(x)$$ vanishes. In physics, the Hamiltonian \eqref{LA} describes that the model an infinitely long solenoid with radius $r\to 0$.  Assume the total magnetic flux $\alpha\notin \Z$. A classical particle motion outside the solenoid
will not be affected by the vector potential due to the lack of magnetic field. However, in
quantum mechanics, consider a charged particle traveling around the solenoid; it picks up a
non-trivial phase shift if it travels around the solenoid once. This phenomenon shows the quantum significance of
the vector potential.\vspace{0.1cm}

\item To understand the Hamiltonian $\LL_{\A}$ on $\R^2\setminus\{0\}$, one needs diffractive geometry of conical singular operator in Ford-Wunsch \cite{FW}. By
standard propagation of singularities, away from the solenoid set $\{0\}$, singularities propagate along the straight lines in $\R^2\setminus\{0\}$. However, near the solenoid there
are two other types of generalized geodesics, along which the singularities propagate, passing
through the solenoids, which correspond to the diffractive and geometric waves emanating
from the solenoid after the diffraction. Therefore, in our proof, the difference between the incoming and outgoing directions plays important role. In particular, when the difference $\theta_1-\theta_2$ is near $\pi$, the proof becomes more delicate.  \vspace{0.1cm}

\item In contrast to Yang \cite{Yang21,Yang22, Yang23}, in which diffractive geometry and the estimates in weighted $L^2$ space are studied, the $L^p$-framework estimates on Bochner-Riesz means need
the elaborate properties of the kernel which captures  the decay and the oscillation behavior.  \vspace{0.1cm}

\item In mathematics, the incident wave is different from the usual plane wave $e^{ix\cdot\xi}$, which leads to that the modified factor $e^{\pm i\alpha(\theta_1-\theta_2)}$ appears in the kernel of Bochner-Riesz means because of the long-range property of the potential $\frac{{\A}(\hat{x})}{|x|}$. In addition, the Heaviside step function $\mathbbm{1}_{I}(\theta_1-\theta_2)$ appears in the kernel of Bochner-Riesz means,
which is another obstacle to efficiently exploit the oscillation behavior of the kernel.  \vspace{0.1cm}

\item In contrast to our previous resolvent estimates in \cite{FZZ1}, we have to study a more difficult kernel 
and prove more localized estimates. To prove the sharp result of Theorem \ref{thm:LA0}, we first prove the localized estimates on $L^p(\R^2)\to L^{p}(\R^2)$ with $p\geq 6$;
and then prove the localized estimate on $L^4(\R^2)\to L^{4}(\R^2)$, which is the key estimate on the line of $q=3p'$, see \cite{Sogge1, Stein}.
\vspace{0.1cm}

\end{itemize}

The paper is organized as follows. In Section \ref{sec:pre}, we sketch the construction of the resolvent and spectral measure kernel of the operator $\LL_{{\A}}$  and we further construct the kernel of Bochner-Riesz means operator. In Section \ref{sec:thmmain}, we prove  the sufficiency part of \eqref{est:BR}  by using two key localized propositions, i.e, Proposition \ref{prop:TGj} and Proposition \ref{prop:TDj} below, which will be shown in  Section \ref{sec:TGj} and Section \ref{sec:TDj} respectively. Finally, in the appendix, we prove the technique lemmas used  in Section \ref{sec:TGj} and Section \ref{sec:TDj}.
\vspace{0.2cm}

{\bf Acknowledgement.} 
This project was supported by National key R\&D program of
China: 2022YFA1005700, National Natural
Science Foundation of China (12171031) and Beijing Natural Science Foundation (1242011).




\section{the kernel of Bochner-Riesz means}\label{sec:pre}

In this section, we first sketch the construction of
the resolvent and the spectral measure kernels of the 
Schr\"odinger operator $\LL_{{\A}}$, which is more convenient to study the Bochner-Riesz means.
We finally construct the kernel of Bochner-Riesz means $S_{\lambda}^\delta(\LL_{\A})$, which captures the properties of decay and oscillation.

\subsection{The kernels of resolvent and spectral measure}
In this subsection, we sketch the construction of
the resolvent and the spectral measure kernels of the 
Schr\"odinger operator $\LL_{{\A}}$, which have been studied to serve the wave and Klein-Gordon equations in \cite{FZZ, GYZZ}.
For self-contained, we sketch the main steps of the proof and provide some physic diffractive interpretations about the representation of the kernels. \vspace{0.2cm}

By using the conic structure of $\mathcal{L}_{{\A}}$ in \eqref{LA}, in the polar coordinate, we can write
\begin{equation}\label{LA-r}
\begin{split}
\mathcal{L}_{{\A}}=-\partial_r^2-\frac{1}r\partial_r+\frac{L_{{\A}}}{r^2},
\end{split}
\end{equation}
where the operator
\begin{equation}\label{L-angle}
\begin{split}
L_{{\A}}&=(i\nabla_{\mathbb{S}^{1}}+{\A}(\hat{x}))^2,\qquad \hat{x}\in \mathbb{S}^1
\\&=-\Delta_{\mathbb{S}^{1}}+\big(|{\A}(\hat{x})|^2+i\,\mathrm{div}_{\mathbb{S}^{1}}{\A}(\hat{x})\big)+2i {\A}(\hat{x})\cdot\nabla_{\mathbb{S}^{1}}.
\end{split}
\end{equation}
Let $\hat{x}=(\cos\theta,\sin\theta)$, then
\begin{equation*}
\partial_\theta=-\hat{x}_2\partial_{\hat{x}_1}+\hat{x}_1\partial_{\hat{x}_2},\quad \partial_\theta^2=\Delta_{\mathbb{S}^{1}}.
\end{equation*}
Define $A(\theta):[0,2\pi)\to \R$ such that
\begin{equation}\label{equ:alpha}
A(\theta)={\bf A}(\cos\theta,\sin\theta)\cdot (-\sin\theta,\cos\theta),
\end{equation}
then by using the transversality condition \eqref{eq:transversal}, we can write
\begin{equation*}
{\bf A}(\cos\theta,\sin\theta)=A(\theta)(-\sin\theta,\cos\theta),\quad \theta\in[0,2\pi).
\end{equation*}
Thus, we obtain
\begin{equation}\label{LAa-s}
\begin{split}
L_{{\A}}&=-\Delta_{\mathbb{S}^{1}}+\big(|{\A}(\hat{x})|^2+i\,\mathrm{div}_{\mathbb{S}^{1}}{\A}(\hat{x})\big)+2i {\A}(\hat{x})\cdot\nabla_{\mathbb{S}^{1}}\\
&=-\partial_\theta^2+\big(|A(\theta)|^2+i\,{A'}(\theta)\big)+2i A(\theta)\partial_\theta\\
&=(i\partial_\theta+A(\theta))^2.
\end{split}
\end{equation}
For simplicity, we define the constant $\alpha$ to be
$$\alpha=\Phi_{\A}=\frac1{2\pi}\int_0^{2\pi} A(\theta) \, d \theta,$$
which is regarded as the total magnetic flux of the magnetic field.\vspace{0.2cm}

Let $\lambda>0$, we define the resolvent of the self-adjoint operator $\LL_{\A}$ by
\begin{equation}\label{def:res}
\big(\LL_{{\A}}-(\lambda^2\pm i0)\big)^{-1}
=\lim_{\epsilon\searrow0}\big(\LL_{{\A}}-(\lambda^2\pm i\epsilon)\big)^{-1},
\end{equation}
where we use the same notation $\LL_{\A}$ to denote its Friedrichs self-adjoint extension of the Hamiltonian \eqref{LA}.\vspace{0.2cm}

In \cite{FZZ, GYZZ}, we observe the conic property of this operator so that we can construct the resolvent kernel 
inspired by Cheeger-Taylor \cite{CT1,CT2}, in which the authors studied the diffraction wave on flat cones.

\begin{proposition}[Resolvent kernel,  \cite{GYZZ}]\label{prop:res-ker}
Let $x=r_1(\cos\theta_1,\sin\theta_1)$ and $y=r_2(\cos\theta_2,\sin\theta_2)$,
then we have the expression of resolvent kernel
\begin{align}\label{equ:res-ker-out}
     \big(\LL_{{\A}}-(\lambda^2\pm i0)\big)^{-1}(x,y)=&\frac1{\pi}\int_{\R^2}\frac{e^{-i(x-y)\cdot\xi}}{|\xi|^2-(\lambda^2\pm i0)}\;d\xi\, A_\alpha(\theta_1,\theta_2)\\\nonumber
  &+\frac1{\pi}\int_0^\infty \int_{\R^2}\frac{e^{-i{\bf n}_s\cdot\xi}}{|\xi|^2-(\lambda^2\pm i0)}\;d\xi \, B_\alpha(s,\theta_1,\theta_2)\;ds,
\end{align}
where  ${\bf n}_s=(r_1+r_2, \sqrt{2r_1r_2(\cosh s-1)})$
and where
\begin{equation}\label{A-al}
\begin{split}
&A_{\alpha}(\theta_1,\theta_2)= \frac{e^{i\int_{\theta_1}^{\theta_2}\,A(\theta')d\theta'}}{4\pi^2}\\
&\times \big(\mathbbm{1}_{[0,\pi]}(|\theta_1-\theta_2|)
  +e^{-i2\pi\alpha}\mathbbm{1}_{[\pi,2\pi)}(\theta_1-\theta_2)+e^{i2\pi\alpha}\mathbbm{1}_{(-2\pi,-\pi]}(\theta_1-\theta_2)\big),
  \end{split}
  \end{equation}
 and
\begin{equation}\label{B-al}
\begin{split}
&B_{\alpha}(s,\theta_1,\theta_2)= -\frac{1}{4\pi^2}e^{-i\alpha(\theta_1-\theta_2)+i\int_{\theta_2}^{\theta_{1}} A(\theta') d\theta'}  \Big(\sin(|\alpha|\pi)e^{-|\alpha|s}\\
&\qquad +\sin(\alpha\pi)\frac{(e^{-s}-\cos(\theta_1-\theta_2+\pi))\sinh(\alpha s)-i\sin(\theta_1-\theta_2+\pi)\cosh(\alpha s)}{\cosh(s)-\cos(\theta_1-\theta_2+\pi)}\Big).
 \end{split}
\end{equation}

\end{proposition}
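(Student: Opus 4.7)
The plan is to reduce the two-dimensional problem to a one-dimensional Bessel-type calculation via separation of variables, and then sum the resulting eigenfunction expansion using the Cheeger--Taylor/Sommerfeld technique for flat cones.

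First I would diagonalize the angular operator $L_\A=(i\partial_\theta+A(\theta))^2$ by the gauge transformation $\phi_m(\theta)=\frac{1}{\sqrt{2\pi}}\exp\bigl(i\int_0^\theta A(\theta')\,d\theta'-i(\alpha-m)\theta\bigr)$. A direct check shows that these are $2\pi$-periodic (using $\alpha=\frac{1}{2\pi}\int_0^{2\pi}A$) and that $L_\A\phi_m=(m-\alpha)^2\phi_m$; together $\{\phi_m\}_{m\in\Z}$ is an orthonormal basis of $L^2(\mathbb{S}^1)$. Combined with \eqref{LA-r}, this yields the orthogonal decomposition $\LL_\A=\bigoplus_{m\in\Z}(-\partial_r^2-\tfrac1r\partial_r+\tfrac{(m-\alpha)^2}{r^2})$, each summand being a Bessel operator of order $\nu_m:=|m-\alpha|$.

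Next I would write the resolvent through the Hankel functional calculus. Using the standard Lipschitz--Hankel integral
\[
\int_0^\infty\frac{J_{\nu}(\rho r_1)J_{\nu}(\rho r_2)}{\rho^2-(\lambda^2\pm i0)}\rho\,d\rho=\frac{\pm i}{2}J_\nu(\lambda r_<)H_\nu^{(1)/(2)}(\lambda r_>),
\]
one obtains the series representation
\[
(\LL_\A-(\lambda^2\pm i0))^{-1}(x,y)=\tfrac{\pm i}{2}\sum_{m\in\Z}\phi_m(\theta_1)\overline{\phi_m(\theta_2)}\,J_{\nu_m}(\lambda r_<)H^{(1)/(2)}_{\nu_m}(\lambda r_>).
\]
After substituting the explicit $\phi_m$, all the $\theta$-dependence factors out as $e^{i\int_{\theta_2}^{\theta_1}A}e^{-i\alpha(\theta_1-\theta_2)}\sum_m e^{im(\theta_1-\theta_2)}J_{\nu_m}H_{\nu_m}^{(1)/(2)}$.

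The core step is to evaluate this generalized Graf-type sum. Following Cheeger--Taylor, one writes each Bessel function via its Schl\"afli contour integral and exchanges summation with integration; the discrete sum in $m$ produces a periodic delta-comb on the contour, which after Poisson/contour deformation splits into two contributions: (i) residues at the real poles lying in the strip $|\mathrm{Im}(\zeta)|<\pi$, giving the geometric term in which $\nu_m$ disappears and one recovers the free resolvent kernel $\frac{1}{\pi}\int_{\R^2}\frac{e^{-i(x-y)\cdot\xi}}{|\xi|^2-(\lambda^2\pm i0)}d\xi$, modulated by the monodromy factor $A_\alpha(\theta_1,\theta_2)$ in \eqref{A-al}; and (ii) integrals along the shifted vertical contours $\mathrm{Re}(\zeta)=\pm\pi$, which are parametrized by $s\in(0,\infty)$ through $\zeta=\pm\pi+is$, and produce the diffractive term with ${\bf n}_s=(r_1+r_2,\sqrt{2r_1r_2(\cosh s-1)})$ since $|{\bf n}_s|^2=r_1^2+r_2^2+2r_1r_2\cosh s$. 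Careful bookkeeping of the hyperbolic-$\sinh$ and trigonometric factors, using $\sum_m e^{im\psi}e^{-|m-\alpha|s}$-type closed forms, then delivers the explicit density $B_\alpha(s,\theta_1,\theta_2)$ in \eqref{B-al}.

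The main obstacle is the final step: controlling the sum over $m$ uniformly as one moves the contour across the branch points and identifying exactly the indicator functions in $A_\alpha$ (which encode whether the straight geodesic from $y$ to $x$ crosses the cut used to define the angular coordinate) and the $e^{\pm 2\pi i\alpha}$ phases (which are precisely the Aharonov--Bohm monodromies around the solenoid). This is the point where the scaling criticality of $\A(\hat x)/|x|$ intervenes: unlike in subcritical settings, the diffractive integral does not decouple into a small perturbation, and its exact form must be extracted from the Sommerfeld integral. Once these identifications are made, matching with \eqref{equ:res-ker-out} is a bookkeeping exercise.
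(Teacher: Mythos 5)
Your outline is correct in substance but follows a genuinely different route from the paper. Both arguments start identically — separation of variables with the gauge-transformed angular eigenfunctions (your $\phi_m$ are, up to relabeling $m\mapsto -k$, exactly the paper's $\varphi_k$ with eigenvalues $\nu_k^2=|k+\alpha|^2$) — but you then compute the resolvent directly, mode by mode, through the Lipschitz--Hankel/Green's-function identity and sum the resulting $J_{\nu}H^{(1)/(2)}_{\nu}$ series by a Schl\"afli--Sommerfeld contour deformation, reading off the geometric term from the poles and the diffractive term from the side contours $\mathrm{Re}\,\zeta=\pm\pi$. The paper instead never touches the Hankel series: it writes the resolvent as a time integral of the Schr\"odinger propagator, computes each radial mode by the Weber identity (yielding $\tfrac{e^{-(r_1^2+r_2^2)/4it}}{2it}I_{\nu_k}(\tfrac{r_1r_2}{2it})$), sums in $k$ via the integral representation \eqref{m-bessel} of $I_\nu$ together with Poisson summation to get the propagator decomposition \eqref{equ:Skernel} into $A_\alpha$ and $B_\alpha$, then integrates in $t$ using \eqref{equ:intr1r223}, which directly produces the Fourier-integral form $\frac1\pi\int_{\R^2}\frac{e^{-i{\bf n}_s\cdot\xi}}{|\xi|^2-z}\,d\xi$ stated in \eqref{equ:res-ker-out}; the incoming resolvent is then obtained by the conjugation symmetry \eqref{out-inc} rather than by carrying $H^{(2)}$ throughout. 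What each route buys: yours is more direct (no time integral, both signs handled at once) and is the classical Cheeger--Taylor/Carslaw computation, but the conditionally convergent Bessel--Hankel sum, the interchange of sum and contour integral, and the ``delta-comb'' step all need justification, and at the end you must still re-identify $H_0^{(1)}(\lambda|x-y|)$ and $H_0^{(1)}(\lambda|{\bf n}_s|)$ with the plane-wave integrals appearing in the statement; the paper's propagator route has the advantage that the $\epsilon$-regularization in \eqref{equ:knukdef12sch} makes the $k$-sum and the Poisson summation absolutely convergent before limits are taken, at the cost of the extra time integration. Two small slips to fix in your sketch: the mode-wise Green's function is $\frac{\pm i\pi}{2}J_\nu(\lambda r_<)H^{(1)/(2)}_\nu(\lambda r_>)$ (you dropped the factor $\pi$), and if the deformed contours sit at $\mathrm{Re}\,\zeta=\pm\pi$ the residues you collect are those with $|\mathrm{Re}\,\zeta|<\pi$, not $|\mathrm{Im}\,\zeta|<\pi$; neither affects the structure of the argument.
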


\begin{remark} From \eqref{equ:res-ker-out}, we can see that the kernel of resolvent has two terms.
The first one is close to the resolvent kernel of $-\Delta$ but with $A_{\alpha}(\theta_1,\theta_2)$ due to the long-range perturbation of the potential $\frac{{\A}(\hat{x})}{|x|}$.
The second one is from the diffractive effect and $B_{\alpha}(s,\theta_1,\theta_2)$ is singular when $\theta_1-\theta_2\to \pi$ and $s\to 0$. As explained above, both of them
will appear in the kernel of Bochner-Riesz means.
\end{remark}

\begin{remark} In particular, the total magnetic flux $\alpha\in\Z$, $B_{\alpha}(s,\theta_1,\theta_2)$ vanishes and $A_{\alpha}(\theta_1,\theta_2)$ becomes a constant. 
The resolvent is consist with the resolvent kernel of Euclidean Laplacian without magnetic potential. This is corresponding to a physical explanation, 
saying the unitarily equivalent of magnetic Schr\"odinger operators.
\end{remark}

\begin{remark} We here refine the $A_{\alpha}(\theta_1,\theta_2)$ of \cite{GYZZ} by
\begin{equation*}
A_{\alpha}(\theta_1,\theta_2)= \frac{e^{i\int_{\theta_1}^{\theta_2} A(\theta')d\theta'}}{4\pi^2}\big(\mathbbm{1}_{[0,\pi]}(|\theta_1-\theta_2|)
  +e^{-i2\pi\alpha}\mathbbm{1}_{[\pi,2\pi)}(\theta_1-\theta_2)+e^{i2\pi\alpha}\mathbbm{1}_{(-2\pi,-\pi]}(\theta_1-\theta_2)\big).
  \end{equation*}
Since $\theta_1,\theta_2\in [0, 2\pi)$, hence $\theta_1-\theta_2\in (-2\pi, 2\pi)$, thus we obtain
\begin{align*}
&\sum_{\{j\in\Z: 0\leq |\theta_1-\theta_2+2j\pi|\leq \pi\}} e^{z\cos(\theta_1-\theta_2+2j\pi)}e^{i(\theta_1-\theta_2+2j\pi)\alpha}
\\
 =&\frac1{2\pi}\times\begin{cases}
e^{z\cos(\theta_1-\theta_2)} e^{i(\theta_1-\theta_2)\alpha}\quad&\text{if}\quad |\theta_1-\theta_2|<\pi\\
e^{z\cos(\theta_1-\theta_2)}e^{i(\theta_1-\theta_2-2\pi)\alpha}\quad&\text{if}\quad \pi<\theta_1-\theta_2<2\pi\\
e^{z\cos(\theta_1-\theta_2)}e^{i(\theta_1-\theta_2+2\pi)\alpha}\quad&\text{if}\quad -2\pi<\theta_1-\theta_2<-\pi\\
e^{-z}\big(e^{i\pi\alpha}+e^{-i\pi\alpha}\big)\quad&\text{if}\quad |\theta_1-\theta_2|=\pi.
 \end{cases}
\end{align*}

\end{remark}

\begin{proof} We sketch the proof here and we refer to \cite{GYZZ} and \cite{FZZ} for details.
The outgoing resolvent can be written as
\begin{equation}\label{equ:resfor}
  \big(\LL_{{\A},0}-(\lambda^2+i0)\big)^{-1}=-\frac1i\lim_{\epsilon\searrow0}\int_0^\infty e^{-it\LL_{{\A},0}}e^{it(\lambda^2+i\epsilon)}\;dt,
\end{equation}
by observing that for $z=\lambda^2+i\epsilon\in\{z\in\C:\;{\rm Im}(z)>0\}$
\begin{equation*}
  (s-z)^{-1}=-\frac1i\int_0^\infty e^{-ist}e^{izt}\;dt,\quad \forall\;s\in\R.
\end{equation*}
We construct the Schr\"odinger propagator 
  \begin{align}\label{equ:ktxyschr}
    e^{-it\LL_{{\A}}}(x,y)=&\sum_{k\in\Z}\varphi_{k}(\theta_1)\overline{\varphi_{k}(\theta_2)}K_{\nu_k}(t; r_1,r_2)\\\nonumber
    =&\frac1{2\pi}e^{-i\alpha(\theta_1-\theta_2)+i\int_{\theta_2}^{\theta_{1}}A(\theta') d\theta'} \sum_{k\in\Z} e^{-ik(\theta_1-\theta_2)}  K_{\nu_k}(t; r_1,r_2),
  \end{align}
  by making use of spectral argument and the functional calculus. Here
  $\nu_k=|k+\alpha|, k\in\Z$ is the square root of the eigenvalue of the operator $L_{{\A}}$ in \eqref{LAa-s} and 
\begin{equation}\label{eigf}
\varphi_k(\theta)=\frac1{\sqrt{2\pi}}e^{-i\big(\theta(k+\alpha)-\int_0^{\theta}A(\theta') d\theta'\big)}
\end{equation}
is the corresponding eigenfunction.
The radial kernel $K_{\nu}$ is given by
\begin{align}\label{equ:knukdef12sch}
  K_{\nu}(t,r_1,r_2)=&\int_0^\infty e^{-it\rho^2}J_{\nu}(r_1\rho)J_{\nu}(r_2\rho) \,\rho d\rho\\\nonumber
  =&\lim_{\epsilon\searrow0}\int_0^\infty e^{-(\epsilon+it)\rho^2}J_{\nu}(r_1\rho)J_{\nu}(r_2\rho) \,\rho d\rho\\\nonumber
  =&\lim_{\epsilon\searrow0}\frac{e^{-\frac{r_1^2+r_2^2}{4(\epsilon+it)}}}{2(\epsilon+it)} I_\nu\Big(\frac{r_1r_2}{2(\epsilon+it)}\Big),
\end{align}
where we use the Weber identity \cite[Proposition 8.7]{Taylor} and $I_\nu$ is the modified Bessel function
\begin{equation}\label{m-bessel}
I_\nu(z)=\frac1{\pi}\int_0^\pi e^{z\cos s} \cos(\nu s) ds-\frac{\sin(\nu\pi)}{\pi}\int_0^\infty e^{-z\cosh s} e^{-s\nu} ds.
\end{equation}
Using the Poisson summation formula, we can sum in $k$ to obtain
the kernel of Schr\"odinger propagator
\begin{equation}\label{equ:Skernel}
\begin{split}
  &e^{-it\LL_{{\A}}}(x,y)
  =\frac{e^{-\frac{|x-y|^2}{4it}} }{it}A_{\alpha}(\theta_1,\theta_2)+\frac{e^{-\frac{r_1^2+r_2^2}{4it}} }{it}  \int_0^\infty e^{-\frac{r_1r_2}{2it}\cosh s} B_{\alpha}(s,\theta_1,\theta_2) ds.
\end{split}
\end{equation}
For $z=\lambda^2+i\epsilon$ with $\epsilon>0$, we observe that
\begin{align*}
 & \int_0^\infty \frac{e^{-\frac{|x-y|^2}{4it}} }{it}e^{itz}\;dt
  =\frac{1}{i\pi}\int_{\R^2}\frac{e^{-i(x-y)\cdot\xi}}{|\xi|^2-z}\;d\xi,
\end{align*}
and
\begin{equation}\label{equ:intr1r223}
  \int_0^\infty  \frac{e^{-\frac{r_1^2+r_2^2}{4it}} }{it} e^{-\frac{r_1r_2}{2it}\cosh s} e^{itz}\;dt=\frac{1}{i\pi}\int_{\R^2}\frac{e^{-i{\bf n}_s\cdot\xi}}{|\xi|^2-z}\;d\xi,
\end{equation}
where ${\bf n}_s=(r_1+r_2, \sqrt{2r_1r_2(\cosh s-1)})$. We plug \eqref{equ:Skernel} into \eqref{equ:resfor} to obtain the outgoing resolvent of \eqref{equ:res-ker-out}.
The incoming resolvent of \eqref{equ:res-ker-out} can be proved 
by noticing 
\begin{equation}\label{out-inc}
\begin{split}
(\mathcal L_{{\A}}-(\lambda^2-i0))^{-1}=\overline{(\overline{\mathcal L_{{\A}}}-(\lambda^2+i0))^{-1}}.
\end{split}
\end{equation}
Indeed, from \eqref{LA-r} and \eqref{LAa-s}, 
 $\overline{\mathcal L_{{\A}}}$ is the same as $\mathcal L_{{\A}}$ with replacing $A(\theta)$ by $-A(\theta)$.
This together with the facts that $\overline{A_{-\alpha}}=A_\alpha$ and $\overline{B_{-\alpha}}=B_\alpha$ shows the incoming resolvent in \eqref{equ:res-ker-out}.

\end{proof}

According to Stone's formula, the spectral measure is related to the resolvent
\begin{equation}\label{equ:spemes}
   dE_{\sqrt{\LL_{\A}}}(\lambda)=\frac{d}{d\lambda}E_{\sqrt{\LL_{\A}}}(\lambda)\;d\lambda
   =\frac{\lambda}{i\pi}\big(R(\lambda+i0)-R(\lambda-i0)\big)\;d\lambda
\end{equation}
where the resolvent is given by
$$R(\lambda\pm i0)=\big(\LL_{{\A}}-(\lambda^2\pm i0)\big)^{-1}=\lim_{\epsilon\searrow0}\big(\LL_{{\A}}-(\lambda^2\pm i\epsilon)\big)^{-1}.$$
\begin{proposition}\label{prop:spect}
Let $x=r_1(\cos\theta_1, \sin\theta_1)$ and $y=r_2(\cos\theta_2, \sin\theta_2)$ in $\R^2\setminus\{0\}$. Then the Schwartz kernel of the spectral measure satisfies
 \begin{equation}\label{ker:spect}
 \begin{split}
 dE_{\sqrt{\LL_{{\A}}}}(\lambda;x,y) =&
\frac{\lambda}{\pi} \Big(
 \int_{\mathbb{S}^1} e^{-i\lambda (x-y)\cdot\omega} d\sigma_\omega A_{\alpha}(\theta_1,\theta_2)
 \\&\quad
+\int_0^\infty \int_{\mathbb{S}^1} e^{-i\lambda {\bf n}_s\cdot\omega} d\sigma_\omega
B_{\alpha}(s,\theta_1,\theta_2) ds\Big),
\end{split}
\end{equation}
where $A_{\alpha}(\theta_1,\theta_2)$ and $B_{\alpha}(s,\theta_1,\theta_2)$ are given in \eqref{A-al} and \eqref{B-al}.

\end{proposition}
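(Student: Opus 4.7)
The plan is to apply Stone's formula \eqref{equ:spemes} directly to the resolvent kernel representation \eqref{equ:res-ker-out} obtained in Proposition~\ref{prop:res-ker}. Since the angular factors $A_\alpha(\theta_1,\theta_2)$ and $B_\alpha(s,\theta_1,\theta_2)$ are independent of $\lambda$ (they depend only on the geometry and on the flux $\alpha$) and identical in the outgoing and incoming resolvents, they simply pull out of the difference $R(\lambda+i0)-R(\lambda-i0)$. Hence the whole computation reduces to evaluating the jump of the Fourier-type integrals in $\xi$ at the characteristic variety $|\xi|^2=\lambda^2$, then reassembling with the factors $A_\alpha$, $B_\alpha$, and $ds$.

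For the first term, the key step is the Plemelj--Sokhotski identity
\[
\frac{1}{|\xi|^2-(\lambda^2+i0)}-\frac{1}{|\xi|^2-(\lambda^2-i0)}=2\pi i\,\delta(|\xi|^2-\lambda^2),
\]
which, combined with the prefactor $\lambda/(i\pi)$ from Stone's formula and the $1/\pi$ from \eqref{equ:res-ker-out}, yields
\[
\frac{\lambda}{i\pi}\cdot\frac{1}{\pi}\int_{\R^2}e^{-i(x-y)\cdot\xi}\bigl[\,\cdots\bigr]\,d\xi=\frac{2\lambda}{\pi}\int_{\R^2}e^{-i(x-y)\cdot\xi}\,\delta(|\xi|^2-\lambda^2)\,d\xi.
\]
Writing $\xi=\rho\omega$ with $\omega\in\mathbb{S}^1$ and $d\xi=\rho\,d\rho\,d\sigma_\omega$, and using $\delta(\rho^2-\lambda^2)=\frac{1}{2\lambda}\delta(\rho-\lambda)$ on $\rho>0$, this collapses to $\frac{\lambda}{\pi}\int_{\mathbb{S}^1}e^{-i\lambda(x-y)\cdot\omega}\,d\sigma_\omega$, multiplied by $A_\alpha(\theta_1,\theta_2)$, which is exactly the first term in \eqref{ker:spect}.

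The second term is handled in the same way: the phase $e^{-i{\bf n}_s\cdot\xi}$ depends on $\xi$ only through the oscillatory exponent and the coefficient $B_\alpha(s,\theta_1,\theta_2)\,ds$ is $\xi$-independent and $\lambda$-independent, so the $\xi$-integral produces the jump $2\pi i\,\delta(|\xi|^2-\lambda^2)$ as before. After applying the same polar-coordinate reduction one obtains $\frac{\lambda}{\pi}\int_{\mathbb{S}^1}e^{-i\lambda{\bf n}_s\cdot\omega}\,d\sigma_\omega\,B_\alpha(s,\theta_1,\theta_2)$, and integrating in $s\in(0,\infty)$ completes \eqref{ker:spect}.

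There is essentially no real obstacle beyond justifying the formal distributional manipulation: one must check that the boundary values $R(\lambda\pm i0)$ exist as kernels on $(\R^2\setminus\{0\})\times(\R^2\setminus\{0\})$ and that the limit $\epsilon\searrow 0$ commutes with the $\xi$-integral and with the $s$-integral in the diffractive term. For the first term this is standard since it coincides with the free-resolvent symbol; for the diffractive term it follows from the integrability properties of $B_\alpha$ established in \cite{GYZZ}. Once these points are granted, the proof is a direct bookkeeping exercise matching the constants, and no new ingredient is required beyond Proposition~\ref{prop:res-ker} and Stone's formula.
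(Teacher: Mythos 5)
Your proposal is correct and takes essentially the same route as the paper: apply Stone's formula to the resolvent kernel of Proposition \ref{prop:res-ker}, pull out the $\lambda$-independent factors $A_\alpha$ and $B_\alpha$, and evaluate the jump of $\big(|\xi|^2-(\lambda^2\pm i0)\big)^{-1}$ across the spectrum in polar coordinates. The only cosmetic difference is that you invoke the Sokhotski--Plemelj identity $2\pi i\,\delta(|\xi|^2-\lambda^2)$ directly, whereas the paper writes the difference as twice the imaginary part and uses the Poisson kernel as an approximation to the identity, which is the same computation and yields the same constants.
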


\begin{remark}
The spectral measure kernel is a bit different from the one stated in \cite{GYZZ}. Here we still keep formula about the Fourier transform of the measure on the ring $\mathbb{S}^1$,
which will be more convenient to compute the Bochner-Riesz kernel. 
\end{remark}

\begin{proof} We just modify the proof of \cite{GYZZ} to obtain \eqref{ker:spect} which is based on the resolvents in Proposition \ref{prop:res-ker}.
According to Stone's formula \eqref{equ:spemes}, and Proposition \ref{prop:res-ker}, we obtain
\begin{equation}\label{eq:spect}
\begin{split}
  &dE_{\sqrt{\LL_{\A}}}(\lambda; x,y)\\
  = &\frac1{\pi} \frac{\lambda}{i\pi} A_\alpha(\theta_1,\theta_2)\int_{\R^2} e^{-i(x-y)\cdot\xi}\Big(\frac{1}{|\xi|^2-(\lambda^2+i0)}-\frac{1}{|\xi|^2-(\lambda^2-i0)}\Big)\;d\xi\\
  &+\frac1{\pi} \frac{\lambda}{i\pi} \int_0^\infty \Big[ \int_{\R^2} e^{-i{\bf n}_s\cdot\xi}\Big(\frac{1}{|\xi|^2-(\lambda^2+i0)}-\frac{1}{|\xi|^2-(\lambda^2-i0)}\Big)\;d\xi \Big]B_\alpha(s,\theta_1,\theta_2)\;ds.
  \end{split}
\end{equation}
We observe that
\begin{equation}\label{id-spect}
\begin{split}
&\lim_{\epsilon\to 0^+}\frac{\lambda}{i\pi }\int_{\R^2} e^{-ix\cdot\xi}\Big(\frac{1}{|\xi|^2-(\lambda^2+i\epsilon)}-\frac{1}{|\xi|^2-(\lambda^2-i\epsilon)}\Big) d\xi\\
=&\lim_{\epsilon\to 0^+} \frac{\lambda}{\pi }\int_{\R^2} e^{-ix\cdot\xi}\Im\Big(\frac{1}{|\xi|^2-(\lambda^2+i\epsilon)}\Big)d\xi\\
=&\lim_{\epsilon\to 0^+} \frac{\lambda}{\pi }\int_{0}^\infty \frac{\epsilon}{(\rho^2-\lambda^2)^2+\epsilon^2} \int_{|\omega|=1} e^{-i\rho x\cdot\omega} d\sigma_\omega  \, \rho d\rho\\
=& \lambda \int_{|\omega|=1} e^{-i\lambda x\cdot\omega} d\sigma_\omega ,
\end{split}
\end{equation}
where we use the fact that
 the  Poisson kernel is an approximation to the identity which means that, for any reasonable function $m(x)$
\begin{equation}
\begin{split}
m(x)&=\lim_{\epsilon\to 0^+}\frac1\pi \int_{\R} {\rm Im}\Big(\frac{1}{x-(y+i\epsilon)}\Big) m(y)dy
\\&=\lim_{\epsilon\to 0^+}\frac1\pi \int_{\R} \frac{\epsilon}{(x-y)^2+\epsilon^2} m(y)dy.
\end{split}
\end{equation}
Therefore we plug \eqref{id-spect} into \eqref{eq:spect} to obtain that
 \begin{equation*}
 \begin{split}
 dE_{\sqrt{\LL_{{\A}}}}(\lambda;x,y) =&
\frac{\lambda}{\pi} \Big(
 \int_{\mathbb{S}^1} e^{-i\lambda (x-y)\cdot\omega} d\sigma_\omega A_{\alpha}(\theta_1,\theta_2)
 \\&\quad
+\int_0^\infty \int_{\mathbb{S}^1} e^{-i\lambda {\bf n}_s\cdot\omega} d\sigma_\omega
B_{\alpha}(s,\theta_1,\theta_2) ds\Big).
\end{split}
\end{equation*}
Hence, we prove Proposition \ref{prop:spect}.
\end{proof}

\subsection{The kernel of Bochner-Riesz means }

Let us recall that Bochner-Riesz means of order $\delta$  are defined by the formula
\begin{equation}
S_{\lambda}^\delta(\LL_{\A})=\Big(1-\frac{\LL_{\A}}{\lambda^2}\Big)^\delta_{+},\quad \lambda>0.
\end{equation}
Here we use the distribution notation $\chi_+^a(x)$, defined by $\chi_+^a=x_+^a/\Gamma(a+1)$, where $\Gamma$ is the gamma function and
\begin{equation}
x_+^a=
\begin{cases} x^a, \quad &\text{if}\, x\geq 0,\\
0, \quad &\text{if}\, x< 0.
\end{cases}
\end{equation}
By using Proposition \ref{prop:spect}, then the kernel can be represented as 
\begin{equation}\label{ker:BR}
\begin{split}
\Big(1-\frac{\LL_{\A}}{\lambda^2}\Big)^\delta_{+}(x,y)&=\int_0^\infty \Big(1-\frac{\rho^2}{\lambda^2}\Big)^\delta_{+} dE_{\sqrt{\LL_{{\A}}}}(\rho;x,y)  \, d\rho\\
&=\frac1{\pi}\int_0^\infty \rho \Big(1-\frac{\rho^2}{\lambda^2}\Big)^\delta_{+} \left(
 \int_{\mathbb{S}^1} e^{-i\rho (x-y)\cdot\omega} d\sigma_\omega A_{\alpha}(\theta_1,\theta_2)\right.
 \\&\quad \left.
+\int_0^\infty \int_{\mathbb{S}^1} e^{-i\rho {\bf n}_s\cdot\omega} d\sigma_\omega
\, B_{\alpha}(s,\theta_1,\theta_2) ds\right)\, d\rho. \end{split}
\end{equation}
More precisely, the kernel representation of Bochner-Riesz means is given by 
\begin{proposition}\label{prop: ker-BR} Let $x=r_1(\cos\theta_1,\sin\theta_1)$ and $y=r_2(\cos\theta_2,\sin\theta_2)$.
Define
\begin{equation}\label{d-j}
d(r_1,r_2,\theta_1,\theta_2)=\sqrt{r_1^2+r_2^2-2r_1r_2\cos(\theta_1-\theta_2)}=|x-y|,
\end{equation}
and
\begin{equation} \label{d-s}
d_s(r_1,r_2,\theta_1,\theta_2)=|{\bf n}_s|=\sqrt{r_1^2+r_2^2+2  r_1r_2\, \cosh s},\quad s\in [0,+\infty).
\end{equation}
Then the kernel of the Bochner-Riesz mean operator can be written as 
\begin{equation}\label{ker:BR}
\begin{split}
S_{\lambda}^\delta(\LL_{\A})=&\Big(1-\frac{\LL_{\A}}{\lambda^2}\Big)^\delta_{+}(x,y)\\
&=G_{\lambda}(\delta; r_1,\theta_1;r_2,\theta_2)+ D_{\lambda}(\delta; r_1,\theta_1;r_2,\theta_2).
 \end{split}
\end{equation}
Here
\begin{equation}\label{ker:BR-G}
\begin{split}
&G_{\lambda}(\delta; r_1,\theta_1;r_2,\theta_2)
\\&=\lambda^2\Big[\frac{a_1(\lambda d)e^{i\lambda d}}{(1+\lambda d)^{\frac{3}2+\delta}}+\frac{a_2(\lambda d)e^{-i\lambda d}}{(1+\lambda d)^{\frac{3}2+\delta}}+O((1+\lambda d)^{-3})\Big] \times
A_{\alpha}(\theta_1,\theta_2),
\end{split}
\end{equation}
and
\begin{equation}\label{ker:BR-D}
\begin{split}
&D_{\lambda}(\delta; r_1,\theta_1;r_2,\theta_2)\\
&=\lambda^2\int_0^\infty \Big[\frac{a_1(\lambda d_s)e^{i\lambda d_s}}{(1+\lambda d_s)^{\frac{3}2+\delta}}+\frac{a_2(\lambda d)e^{-i\lambda d_s}}{(1+\lambda d_s)^{\frac{3}2+\delta}}+O((1+\lambda d_s)^{-3})\Big]  \, B_{\alpha}(s,\theta_1,\theta_2) ds,
\end{split}
\end{equation}
where the $a_j (j=1,2)$ are bounded from below near infinity and satisfy
\begin{equation}\label{est:aj}
\Big|\big(\frac{\partial}{\partial r}\big)^N a_j(r)\Big|\leq C_N r^{-N}, \quad r>0,\, \forall N\geq 0.
\end{equation} 

\end{proposition}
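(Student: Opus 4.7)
The plan is to reduce the kernel formula (2.19) already established in the excerpt to the Bessel function representation, then to insert the large-argument asymptotic expansion of $J_{\delta+1}$, collecting the oscillatory terms and the remainder.

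First I would compute the inner angular integral. Since the Fourier transform of the surface measure on the unit circle is a zero-order Bessel function,
\begin{equation*}
\int_{\mathbb{S}^1} e^{-i\rho z\cdot\omega}\,d\sigma_\omega = 2\pi J_0(\rho|z|),
\end{equation*}
so that applying this with $z = x-y$ (giving $|z|=d$) and with $z = {\bf n}_s$ (giving $|z|=d_s$) turns (2.19) into
\begin{equation*}
S_\lambda^\delta(\LL_\A)(x,y) = 2 I_\lambda(d)\,A_\alpha(\theta_1,\theta_2) + 2\int_0^\infty I_\lambda(d_s)\,B_\alpha(s,\theta_1,\theta_2)\,ds,
\end{equation*}
where $I_\lambda(r) := \int_0^\lambda \rho(1-\rho^2/\lambda^2)^\delta J_0(\rho r)\,d\rho$. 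This separates cleanly into the pieces that will become $G_\lambda$ and $D_\lambda$.

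Next I would evaluate $I_\lambda(r)$ in closed form using the Sonine integral identity
\begin{equation*}
\int_0^1 (1-t^2)^\delta\, t\, J_0(xt)\,dt = \frac{2^\delta \Gamma(\delta+1)}{x^{\delta+1}} J_{\delta+1}(x),
\end{equation*}
which after the rescaling $t=\rho/\lambda$, $x = \lambda r$ yields
\begin{equation*}
I_\lambda(r) = \lambda^2 \cdot \frac{2^\delta \Gamma(\delta+1)}{(\lambda r)^{\delta+1}} J_{\delta+1}(\lambda r).
\end{equation*}
Thus the entire kernel is rewritten in terms of $J_{\delta+1}(\lambda d)$ and $J_{\delta+1}(\lambda d_s)$, multiplied by $A_\alpha$ and $B_\alpha$ respectively.

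The remaining step is the asymptotic analysis of $(\lambda r)^{-(1+\delta)}J_{\delta+1}(\lambda r)$. I would use Hankel's asymptotic expansion
\begin{equation*}
J_\nu(z) = \sqrt{\tfrac{2}{\pi z}}\left[e^{iz}\,g_+(z) + e^{-iz}\,g_-(z)\right],
\end{equation*}
where $g_\pm$ are classical symbols of order $0$ with $g_\pm(z)\to c_\pm\ne 0$ as $z\to\infty$ and $|\partial_z^N g_\pm(z)|\lesssim z^{-N}$. Applied with $\nu = \delta+1$, this gives
\begin{equation*}
\frac{J_{\delta+1}(\lambda r)}{(\lambda r)^{1+\delta}} = \frac{g_+(\lambda r)\, e^{i\lambda r} + g_-(\lambda r)\, e^{-i\lambda r}}{(\lambda r)^{3/2+\delta}}
\end{equation*}
for $\lambda r \ge 1$. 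For $\lambda r \le 1$ the left hand side is a smooth bounded function of $\lambda r$ by the analytic series expansion of $J_{\delta+1}$, which can be absorbed into an $O((1+\lambda r)^{-N})$ remainder for any $N$. Gluing the two regimes via a smooth cutoff produces symbols $a_1,a_2$ satisfying the derivative bound \eqref{est:aj} and an innocuous remainder $O((1+\lambda r)^{-3})$ collecting both the near-origin piece and the next term in Hankel's expansion (one gains an extra factor $(\lambda r)^{-1}$, which easily exceeds the requested cubic decay once combined with the $(1+\lambda r)^{-3/2-\delta}$ prefactor after absorbing $\delta\ge 0$). Multiplying through by $\lambda^2$ and by $A_\alpha(\theta_1,\theta_2)$ or $B_\alpha(s,\theta_1,\theta_2)\,ds$ yields \eqref{ker:BR-G} and \eqref{ker:BR-D}.

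The only genuinely delicate point is uniformity of the symbol bound across the matching between the Taylor regime $\lambda r \lesssim 1$ and the oscillatory regime $\lambda r \gtrsim 1$; this is standard once one writes $J_{\delta+1}$ via its contour integral representation, so I expect no real obstacle, and the proof is essentially a bookkeeping exercise. The only care needed is to verify that the $s$-integration in the $D_\lambda$ piece is not disturbed by the construction, which it is not since the asymptotic is pointwise in $s$ and the symbols $a_j$ depend only on the single scalar variable $\lambda d_s$.
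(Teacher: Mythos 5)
Your argument is correct and is essentially the paper's own proof: both reduce the kernel to the classical two-dimensional Bochner--Riesz kernel evaluated at $d$ and $d_s$, namely a constant multiple of $\lambda^{2}(\lambda r)^{-(1+\delta)}J_{1+\delta}(\lambda r)$ with $r=d$ or $r=d_s$ (you rederive this closed form via the circle integral $2\pi J_0(\rho|z|)$ and Sonine's identity, whereas the paper simply quotes the known Fourier transform of $(1-|\xi|^2)_+^{\delta}$ from Stein/Sogge together with the Bessel asymptotics), and then both invoke the Hankel expansion to produce symbols $a_1,a_2$ satisfying the stated derivative bounds. One small correction: you must take $a_1,a_2$ to be the full Hankel symbols $g_\pm$, so that the only genuine remainder is the near-origin piece, which decays faster than any power of $(1+\lambda r)^{-1}$; if instead you kept only the leading constants and pushed the next Hankel term into the remainder, as your parenthetical suggests, that term is merely $O\big((1+\lambda r)^{-5/2-\delta}\big)$, which is not $O\big((1+\lambda r)^{-3}\big)$ when $\delta<1/2$, so that justification does not work, although your construction as actually set up does not need it.
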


\begin{remark} The kernel of Bochner-Riesz means \eqref{ker:BR} has two terms $G_{\lambda}(\delta; r_1,\theta_1;r_2,\theta_2)$ and $D_{\lambda}(\delta; r_1,\theta_1;r_2,\theta_2)$,
which are corresponding to the geometry geodesic and diffractive geodesic \cite{FW}, respectively. 
It is clear to see that $A_{\alpha}(\theta_1,\theta_2)$  and $B_{\alpha}(s,\theta_1,\theta_2)$ appear in the kernel of Bochner-Riesz means, which are new obstacles to obtain $L^p$-boundedness.
\end{remark}

\begin{proof} Let us define
\begin{equation}
\begin{split}
G_{\lambda}(\delta; r_1,\theta_1;r_2,\theta_2)
&=\frac1{\pi}\int_0^\infty \rho \Big(1-\frac{\rho^2}{\lambda^2}\Big)^\delta_{+} 
 \int_{\mathbb{S}^1} e^{-i\rho (x-y)\cdot\omega} d\sigma_\omega d\rho\, A_{\alpha}(\theta_1,\theta_2) .
 \end{split}
\end{equation}
and
\begin{equation}
\begin{split}
D_{\lambda}(\delta; s; r_1,\theta_1;r_2,\theta_2)
&=\frac1{\pi}\int_0^\infty \rho \Big(1-\frac{\rho^2}{\lambda^2}\Big)^\delta_{+} \int_0^\infty \int_{\mathbb{S}^1} e^{-i\rho {\bf n}_s\cdot\omega} d\sigma_\omega
\, B_{\alpha}(s,\theta_1,\theta_2) ds\, d\rho.
 \end{split}
\end{equation}
To prove this proposition, from \eqref{ker:BR}, it suffices to prove \eqref{ker:BR-G} and \eqref{ker:BR-D}, which are straightforward consequences of the stationary phase method. 
Before doing this, we recall a known result about the kernel of the Bochner-Riesz mean for $S_{\lambda}^\delta(-\Delta)$ on $\R^n$
\begin{equation}\label{ker:BR-Rn}
K^\delta_\lambda(x)=\int_{\R^n} e^{ix\cdot \xi} \big(1-|\xi|^2/\lambda^2 \big)_+^\delta\, d\xi=\lambda^n K^\delta_1(\lambda x).
\end{equation}
On the one hand, we have
\begin{equation}
K^\delta_1( x)=\int_{\R^n} e^{ix\cdot \xi} \big(1-|\xi|^2 \big)_+^\delta\, d\xi=\pi^{-\delta}(2\pi)^{\frac n2+\delta}\Gamma(1+\delta)|x|^{-\frac n2-\delta} J_{\frac{n}2+\delta}(|x|),
\end{equation}
we refer to \cite[\S 6.19, P430]{Stein}.
On the other hand, one has the complete asymptotic expansion of Bessel function $J_{\nu}(r)$ (e.g. \cite[(15) Chapter 8, P338]{Stein}. )
$$J_{\nu}(r)\sim r^{-\frac12}e^{ir}\sum_{j=0}^\infty a_j r^{-j}+r^{-\frac12}e^{-ir}\sum_{j=0}^\infty b_j r^{-j}, \quad \text{as}\,\, r\to+\infty$$
for suitable coefficients $a_j$ and $b_j$. Therefore, these imply that $K^\delta_1( x)$ is bounded,  and that the asymptotic expansion of $K^\delta_1( x)$ holds
\begin{equation}
K^\delta_1( x)\sim |x|^{-(n+1)/2-\delta}\Big(e^{i|x|}\sum_{j=0}^\infty \alpha_j |x|^{-j}+e^{-i|x|}\sum_{j=0}^\infty \beta_j |x|^{-j}\Big)
\end{equation}
as $|x|\to+\infty$, for suitable coefficients $\alpha_j$ and $\beta_j$. Based on the above discussion, or by using the argument of \cite[Lemma 2.3.3]{Sogge},
we have
\begin{lemma}\label{lem:keyBR} The kernel of $S_{1}^\delta(-\Delta):=S^\delta(-\Delta)$ can be written as
\begin{equation}
K^\delta(x)=K^\delta_1(x)=\frac{a_1(|x|)e^{i|x|}}{(1+|x|)^{\frac{n+1}2+\delta}}+\frac{a_2(|x|)e^{-i|x|}}{(1+|x|)^{\frac{n+1}2+\delta}}+O((1+|x|)^{-n-1})
\end{equation}
where the $a_j (j=1,2)$ are bounded from below near infinity and satisfy
\begin{equation}\label{est:aj}
\Big|\big(\frac{\partial}{\partial r}\big)^N a_j(r)\Big|\leq C_N r^{-N}, \quad \forall N\geq 0.
\end{equation} 
\end{lemma}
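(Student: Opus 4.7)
The plan is to combine the explicit Bessel representation of $K^\delta_1$ recalled just above the lemma with the classical Hankel asymptotic for $J_\nu$, upgraded from a formal series to a symbolic statement, and then to patch together the short-range and long-range regimes by a smooth cutoff.

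First, I would invoke Hankel's asymptotic in the stronger symbolic form: for every $\nu\geq 0$ and $r\geq 1$,
\begin{equation*}
J_\nu(r)=\frac{1}{\sqrt{2\pi r}}\bigl[e^{ir}\tilde A(r)+e^{-ir}\tilde B(r)\bigr],
\end{equation*}
where $\tilde A,\tilde B\in C^\infty([1,\infty))$ satisfy the zero-order symbol estimates $|\partial_r^N\tilde A(r)|+|\partial_r^N\tilde B(r)|\leq C_N r^{-N}$ for all $N\geq 0$ and tend to nonzero constants as $r\to\infty$. This is standard and can be obtained from the integral representation of $J_\nu$ via stationary phase with remainders controlled in symbol norms (cf.\ Stein \cite{Stein}, Chapter VIII). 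Substituting into $K^\delta_1(x)=c_{n,\delta}|x|^{-n/2-\delta}J_{n/2+\delta}(|x|)$ yields, for $|x|\geq 1$,
\begin{equation*}
K^\delta_1(x)=|x|^{-(n+1)/2-\delta}\bigl[e^{i|x|}A(|x|)+e^{-i|x|}B(|x|)\bigr],
\end{equation*}
with $A,B$ inheriting both the symbol estimates and the nonvanishing at infinity.

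Next, I would fix a cutoff $\chi\in C^\infty(\R)$ with $\chi(r)=0$ for $r\leq 1/2$ and $\chi(r)=1$ for $r\geq 1$, and set
\begin{equation*}
a_1(r)=\chi(r)A(r)\bigl(\tfrac{1+r}{r}\bigr)^{(n+1)/2+\delta},\qquad a_2(r)=\chi(r)B(r)\bigl(\tfrac{1+r}{r}\bigr)^{(n+1)/2+\delta}.
\end{equation*}
By construction the remainder
\begin{equation*}
R(x):=K^\delta_1(x)-\frac{a_1(|x|)e^{i|x|}+a_2(|x|)e^{-i|x|}}{(1+|x|)^{(n+1)/2+\delta}}
\end{equation*}
vanishes for $|x|\geq 1$, and is bounded for $|x|\leq 1$ because $K^\delta_1$ is globally bounded (being the Fourier transform of a compactly supported $L^1$ function). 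Hence $R(x)=O((1+|x|)^{-n-1})$ uniformly in $x$. The derivative bounds $|\partial_r^N a_j(r)|\leq C_N r^{-N}$ follow from those of $A,B$ together with the smoothness of $\chi$ and the fact that $(1+r)/r$ is a zero-order symbol on $\mathrm{supp}\,\chi$; the lower bound for large $r$ is inherited from the nonvanishing of the leading Hankel coefficient.

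The only nontrivial step is the symbolic upgrade of the Hankel asymptotic: one must know not just that $J_\nu(r)$ admits a formal expansion in powers of $r^{-1}$ against $e^{\pm ir}$, but that the amplitudes $\tilde A,\tilde B$ are genuine classical symbols of order $0$ with full derivative control. Once this is in hand, every other step is bookkeeping. I expect this to be routine from the steepest-descent analysis of the standard contour integral for $J_\nu$, or by quoting the corresponding result in \cite{Sogge, Stein}; it is the one place where care is needed in order to transfer the estimates faithfully to $a_1,a_2$.
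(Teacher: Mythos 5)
Your argument is correct and follows essentially the same route as the paper: the explicit Bessel-function formula for $K^\delta_1$ together with the Hankel asymptotics for $J_{n/2+\delta}$ in symbol form (the paper simply records this and cites Stein and the argument of Sogge's Lemma 2.3.3 rather than writing out the cutoff bookkeeping). The only cosmetic slip is that you state the symbolic asymptotics for $r\ge 1$ but then use $A,B$ on $[1/2,1)$ through the cutoff $\chi$; invoking the (equally standard) asymptotics on $[1/2,\infty)$, or moving the cutoff outward, fixes this.
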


We first consider \eqref{ker:BR-G} by rewriting 
\begin{equation}
\begin{split}
G_{\lambda}(\delta; r_1,\theta_1;r_2,\theta_2)
&=\frac1{\pi}\int_{\R^2} \Big(1-\frac{|\xi|^2}{\lambda^2}\Big)^\delta_{+} 
 e^{-i (x-y)\cdot\xi} d\xi\, A_{\alpha}(\theta_1,\theta_2),
 \end{split}
\end{equation}
which is very close to the Bochner-Riesz means \eqref{ker:BR-Rn} on $\R^2$. Recall $d=|x-y|$ in \eqref{d-j}, then this lemma together with \eqref{ker:BR-Rn} gives \eqref{ker:BR-G}
\begin{equation}
\begin{split}
&G_{\lambda}(\delta; r_1,\theta_1;r_2,\theta_2)\\
&=\lambda^2\Big[\frac{a_1(\lambda d)e^{i\lambda d}}{(1+\lambda d)^{\frac{3}2+\delta}}+\frac{a_2(\lambda d)e^{-i\lambda d}}{(1+\lambda d)^{\frac{3}2+\delta}}+O((1+\lambda d)^{-3})\Big] \times
A_{\alpha}(\theta_1,\theta_2).
 \end{split}
\end{equation}

Next we prove \eqref{ker:BR-D} by rewriting 
\begin{equation}
\begin{split}
&D_{\lambda}(\delta; r_1,\theta_1;r_2,\theta_2)
=\frac1{\pi}\int_0^\infty \int_{\R^2} \Big(1-\frac{|\xi|^2}{\lambda^2}\Big)^\delta_{+} 
 e^{-i {\bf n}_s\cdot\xi} d\xi 
\, B_{\alpha}(s,\theta_1,\theta_2) ds.
 \end{split}
\end{equation}
Similarly, we use Lemma \ref{lem:keyBR} to obtain 
\begin{equation}
\begin{split}
&D_{\lambda}(\delta; r_1,\theta_1;r_2,\theta_2)\\
&=\lambda^2\int_0^\infty \Big[\frac{a_1(\lambda d_s)e^{i\lambda d_s}}{(1+\lambda d_s)^{\frac{3}2+\delta}}+\frac{a_2(\lambda d)e^{-i\lambda d_s}}{(1+\lambda d_s)^{\frac{3}2+\delta}}+O((1+\lambda d_s)^{-3})\Big]  \, B_{\alpha}(s,\theta_1,\theta_2) ds,
 \end{split}
\end{equation}
as desired.

\end{proof}

We conclude this section by recalling two basic lemmas about the decay estimates of oscillatory integrals, see Stein \cite{Stein}.

\begin{lemma}[The estimates on oscillatory integrals,\cite{Stein} ] \label{lem:absdec}
Let $\phi$ and $\psi$ be smooth functions so that $\psi$ has compact support in $(a,b)$, and $\phi'(x)\neq 0$ for all $x\in[a,b]$. Then,
\begin{equation}\label{equ:lamdec}
  \big|\int_a^b e^{i\lambda \phi(x)}\psi(x)\;dx\big|\lesssim (1+\lambda)^{-K},
\end{equation}
for all $K\geq0$.
\end{lemma}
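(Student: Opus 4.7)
The plan is the standard non-stationary phase argument: repeated integration by parts against the phase, combined with the trivial estimate in the low-frequency regime.

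First I would dispose of small $\lambda$. Since $\psi$ is compactly supported and smooth, the integral is bounded in absolute value by $\int_a^b |\psi(x)|\,dx$, a finite constant $M$. For $\lambda\leq 1$ this already gives $|\int_a^b e^{i\lambda\phi}\psi\,dx|\leq M\leq 2^K M (1+\lambda)^{-K}$, so only the case $\lambda\geq 1$ requires work.

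For $\lambda\geq 1$, the key observation is that the first-order differential operator
\begin{equation*}
L=\frac{1}{i\lambda\,\phi'(x)}\,\frac{d}{dx}
\end{equation*}
satisfies $L\bigl(e^{i\lambda\phi(x)}\bigr)=e^{i\lambda\phi(x)}$, so that $e^{i\lambda\phi(x)}=L^{K}\bigl(e^{i\lambda\phi(x)}\bigr)$ for every $K\geq 0$. Since $\psi$ has compact support inside the open interval $(a,b)$, all boundary contributions vanish when integrating by parts, and $K$ applications yield
\begin{equation*}
\int_a^b e^{i\lambda\phi(x)}\psi(x)\,dx=\int_a^b e^{i\lambda\phi(x)}\,(L^{t})^{K}\psi(x)\,dx,
\end{equation*}
where the formal transpose is $L^{t}\psi=-\tfrac{1}{i\lambda}\,\tfrac{d}{dx}\bigl(\psi/\phi'\bigr)$. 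Each application of $L^{t}$ brings out one factor of $\lambda^{-1}$ and distributes one derivative among $\psi$ and $1/\phi'$; therefore $(L^{t})^{K}\psi$ is a finite linear combination, with coefficient $\lambda^{-K}$, of products of derivatives of $\psi$ and of $1/\phi'$, all supported in a fixed compact subset of $(a,b)$.

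The only point to check is that these products are uniformly bounded. This is where the hypothesis $\phi'(x)\neq 0$ on the compact interval $[a,b]$ is used: by continuity, $|\phi'|$ attains a positive minimum on $[a,b]$, so $1/\phi'$ is smooth on $[a,b]$ with all derivatives bounded, and hence the integrand in absolute value is bounded by $C_K\lambda^{-K}\mathbbm{1}_{\mathrm{supp}\,\psi}(x)$. Integrating gives $|\int_a^b e^{i\lambda\phi}\psi\,dx|\leq C_K\lambda^{-K}\leq 2^K C_K(1+\lambda)^{-K}$ when $\lambda\geq 1$. Combining the two regimes yields the claimed bound for every $K\geq 0$. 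There is no real obstacle here; the only step that deserves attention is verifying the lower bound on $|\phi'|$ via compactness, which is precisely the content of the non-vanishing hypothesis.
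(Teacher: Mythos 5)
Your proof is correct and is exactly the standard non-stationary phase argument from Stein that the paper invokes by citation without reproducing a proof: split off $\lambda\le 1$ trivially, then integrate by parts $K$ times with $L=\frac{1}{i\lambda\phi'}\frac{d}{dx}$, using compact support of $\psi$ in $(a,b)$ to kill boundary terms and the positive lower bound of $|\phi'|$ on $[a,b]$ to control $(L^{t})^{K}\psi$. The only cosmetic remark is that your iteration treats integer $K$; the statement for arbitrary real $K\ge 0$ follows immediately since $(1+\lambda)^{-\lceil K\rceil}\le(1+\lambda)^{-K}$.
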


We also need the following Van der Corput lemma.
\begin{lemma}[Van der Corput,\cite{Stein} ] \label{lem:VCL} Let $\phi$ be real-valued and smooth in $(a,b)$, and that $|\phi^{(k)}(x)|\geq1$ for all $x\in (a,b)$. Then
\begin{equation}
\left|\int_a^b e^{i\lambda\phi(x)}\psi(x)dx\right|\leq c_k\lambda^{-1/k}\left(|\psi(b)|+\int_a^b|\psi'(x)|dx\right)
\end{equation}
holds when (i) $k\geq2$ or (ii) $k=1$ and $\phi'(x)$ is monotonic. Here $c_k$ is a constant depending only on $k$.
\end{lemma}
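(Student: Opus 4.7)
The plan is to prove Lemma \ref{lem:VCL} by induction on $k$, handling the base case $k=1$ directly via integration by parts and then bootstrapping the general case from it.

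For the base case $k=1$, I would write
$$\int_a^b e^{i\lambda\phi(x)}\psi(x)\,dx = \int_a^b \frac{1}{i\lambda\phi'(x)}\,\frac{d}{dx}\bigl(e^{i\lambda\phi(x)}\bigr)\psi(x)\,dx,$$
integrate by parts, and split the resulting interior term as
$$\int_a^b \frac{\psi'(x)}{i\lambda\phi'(x)} e^{i\lambda\phi(x)}\,dx \;-\; \int_a^b \frac{\psi(x)\phi''(x)}{i\lambda\phi'(x)^2}e^{i\lambda\phi(x)}\,dx.$$
The first integral is bounded by $\lambda^{-1}\int_a^b|\psi'(x)|\,dx$ using $|\phi'|\ge 1$; for the second, the hypothesis that $\phi'$ is monotonic forces $\phi''$ to have constant sign, so $\int_a^b|\phi''(x)/\phi'(x)^2|\,dx = \bigl|\bigl[-1/\phi'\bigr]_a^b\bigr|\le 2$, and combined with the pointwise estimate $|\psi(x)|\le|\psi(b)|+\int_x^b|\psi'|$ this yields the claim for $k=1$ with $c_1=O(1)$.

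For the inductive step, suppose the lemma is known at stage $k-1$ with constant $c_{k-1}$, and assume $|\phi^{(k)}|\ge 1$ on $(a,b)$ for some $k\ge 2$. Then $\phi^{(k-1)}$ is strictly monotonic, hence has at most one zero $c\in[a,b]$. Fix $\delta>0$ and partition $[a,b]$ into $I_\delta=\{x:|x-c|<\delta\}\cap[a,b]$ and its complement. On $I_\delta$ the oscillatory integral is trivially bounded by $2\delta\,\|\psi\|_\infty$, while on the complement $|\phi^{(k-1)}|\ge\delta$; rescaling $\phi\mapsto\phi/\delta$ and applying the inductive hypothesis on each of the (at most two) remaining subintervals produces a bound of order $c_{k-1}(\lambda\delta)^{-1/(k-1)}\bigl(|\psi(b)|+\int_a^b|\psi'|\bigr)$. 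Optimising in $\delta\sim\lambda^{-1/k}$ then yields the desired $\lambda^{-1/k}$ decay, after absorbing $\|\psi\|_\infty\le|\psi(b)|+\int_a^b|\psi'|$ into the stated normalisation.

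The main subtlety will be matching the asymmetric boundary form $|\psi(b)|+\int|\psi'|$ through the induction: the endpoint evaluation in the integration by parts naturally produces $\psi$ at both endpoints, and $\psi(a)$ must be absorbed into the total-variation term rather than kept as a separate supremum. Equally, careful bookkeeping of the sign of $[-1/\phi']_a^b$ and a separate, easier treatment of the case where $\phi^{(k-1)}$ has no zero in $[a,b]$ (so that $|\phi^{(k-1)}|$ is already bounded below throughout) are needed. Once these issues are resolved the constants $c_k$ can be tracked explicitly and depend only on $k$, as asserted.
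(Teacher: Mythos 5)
Your argument is correct: the base case $k=1$ by integration by parts (using monotonicity of $\phi'$ to control $\int|\phi''|/\phi'^2$ via $[-1/\phi']_a^b$), followed by induction on $k$, excising a $\delta$-neighbourhood of the single near-zero of the monotone function $\phi^{(k-1)}$ and optimising $\delta\sim\lambda^{-1/k}$, is precisely the classical proof of the Van der Corput lemma in Stein's book, which is what the paper cites; the paper itself gives no proof of this lemma. So your proposal matches the intended (cited) argument, and the bookkeeping issues you flag --- absorbing $|\psi(a)|$ and the interior endpoint values into $|\psi(b)|+\int_a^b|\psi'|$, and the case where $\phi^{(k-1)}$ has no zero so the small set is an interval adjacent to an endpoint --- are exactly the points one must (and can routinely) handle.
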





\section{The proof of Main Theorem }\label{sec:thmmain}

In this section, we prove the sufficiency part of \eqref{est:BR} in the Theorem \ref{thm:LA0} by using two key propositions, whose proofs are postponed to the next sections.
By the scaling invariance of the operator $\LL_{{\A}}$, it suffices to prove \eqref{est:BR} when $\lambda=1$, that is,
\begin{equation}\label{est:br1}
\|S_{1}^\delta(\LL_{\A}) f\|_{L^p(\R^2)}\leq C\|f\|_{L^p(\R^2)} 
\end{equation}
whenever $
\delta>\delta_c(p,2)=\max\big\{0, 2\big|1/2-1/p\big|-1/2\big\}$.
Indeed,  from \eqref {ker:BR}, one can see that
$S_{\lambda}^\delta(\LL_{\A})(x,y)=\lambda^2 S_{1}^\delta(\LL_{\A})(\lambda x, \lambda y),$
hence \eqref{est:br1} implies \eqref{est:BR}. We briefly write $S^\delta(\LL_{\A})=S_{1}^\delta(\LL_{\A})$.\vspace{0.1cm}

From Proposition \ref{prop: ker-BR} with $\lambda=1$, we obtain
\begin{align}\label{eq:log1}
S^\delta(\LL_{\A})(x,y)=&\sum_{\pm}\big(G^{\pm}_1+D^{\pm}_1\big)(r_1,\theta_1;r_2,\theta_2)+\big(G_2+D_2\big)(r_1,\theta_1;r_2,\theta_2)
\end{align}
where the kernels are defined by
\begin{equation}\label{ker-G}
\begin{split}
G^{\pm}_1(r_1,\theta_1;r_2,\theta_2)&=e^{\pm i|x-y|} (1+|x-y|)^{-\frac32-\delta} a_\pm (|x-y|)A_\alpha(\theta_1,\theta_2),\\
G_2(r_1,\theta_1;r_2,\theta_2)&=b(|x-y|)A_\alpha(\theta_1,\theta_2),
\end{split}
\end{equation}
and
\begin{equation}\label{ker-D}
\begin{split}
D^\pm_1(r_1,\theta_1;r_2,\theta_2)&= \int_0^\infty e^{\pm i|{\bf n}_s|} (1+|{\bf n}_s|)^{-\frac32-\delta} a_\pm(|{\bf n}_s|)\, B_\alpha(s,\theta_1,\theta_2)\;ds,\\
D_2(r_1,\theta_1;r_2,\theta_2)&= \int_0^\infty b(|{\bf n}_s|)\, B_\alpha(s,\theta_1,\theta_2)\;ds.
\end{split}
\end{equation}
Here $A_\alpha(\theta_1,\theta_2)$, $B_\alpha(s,\theta_1,\theta_2)$ are as in Proposition \ref{prop:res-ker} and the functions $a_\pm$ satisfy \eqref{est:aj} and $b$ obeys that
\begin{equation}\label{est:b}
|b(r)|\leq C(1+r)^{-3}.
\end{equation}
Hence, we write that
\begin{align*}
   S^\delta(\LL_{\A})f(x)
=\big(T_{G^+_1}f+T_{G^-_1}f+T_{G_2}f+T_{D^+_1}f+T_{D^-_1}f+T_{D_2}f\big)(x),
\end{align*}
where
$$T_{K}f(x)=\int_0^\infty \int_0^{2\pi }K(r_1,\theta_1;r_2,\theta_2) f(r_2,\theta_2)  d\theta_2 \,r_2 dr_2.$$
Thus, to prove \eqref{est:br1}, it suffices to prove that there exists a constant $C$ such that
\begin{equation}\label{equ:goalredu}
\|T_{K}\|_{L^p(\R^2)\to L^p(\R^2)}\leq C, \quad K\in\{G^\pm_1, G_2, D^\pm_1, D_2\}.
\end{equation}

\subsection{The estimates of $T_{G_2}$ and $T_{D_2}$.}

We first consider the two easy ones $T_{G_2}$ and $T_{D_2}$.  A direct computation yields two facts that
\begin{equation}\label{est:AB}
\begin{split}
|{\bf n}_s|&\geq |x-y|, \\
|A_{\alpha}(\theta_1,\theta_2)|&+\int_0^\infty \big|B_{\alpha}(s,\theta_1,\theta_2)\big| ds\leq C.
\end{split}
\end{equation}
Indeed, to estimate the integration $\int_0^\infty \big|B_{\alpha}(s,\theta_1,\theta_2)\big| ds$, from \eqref{B-al}, we have used the facts (proved in \cite{FZZ}) that for $\alpha\in(-1,1)\backslash\{0\}$
  \begin{align}\label{equ:ream1}
    \int_0^\infty e^{-|\alpha|s}\;ds\lesssim&1,\\\label{equ:ream2}
    \int_0^\infty  \Big|\frac{(e^{-s}-\cos(\theta_1-\theta_2+\pi))\sinh(\alpha s)}{\cosh(s)-\cos(\theta_1-\theta_2+\pi)}\Big|\;ds\lesssim&1,
    \\\label{equ:ream3}
     \int_0^\infty  \Big|\frac{\sin(\theta_1-\theta_2+\pi)\cosh(\alpha s)}{\cosh(s)-\cos(\theta_1-\theta_2+\pi)}\Big|\;ds\lesssim&1.
  \end{align}

By \eqref{est:b} and  Young’s inequality, we deduce that for $1\leq p\leq \infty$ 
\begin{equation}\label{equ:T2est}
\begin{split}
&\|T_{G_2}f\|_{L^p(\R^2)}+\|T_{D_2}f\|_{L^p(\R^2)}\\
&\leq \Big\|\int_{\R^2} (1+|x-y|)^{-3} |f(y)|dy\Big\|_{L^p(\R^2)}\\
&\qquad+\Big\|\int_{\R^2} \int_0^\infty (1+|{\bf n}_s|)^{-3} |B_{\alpha}(s,\theta_1,\theta_2)\big| ds |f(y)|dy\Big\|_{L^p(\R^2)}\\
&\leq \Big\|\int_{\R^2} (1+|x-y|)^{-3} |f(y)|dy\Big\|_{L^p(\R^2)}\leq C\|f\|_{L^p(\R^2)}.
\end{split}
\end{equation}

\subsection{The estimate of $ T_{G^\pm_1}$.} In this subsection, we prove
\begin{equation}\label{equ:tg1red}
  \|T_{G^\pm_1}\|_{L^{p}(\R^2)\to L^{p}(\R^2)}\leq C.
\end{equation}
To this end, recall $A_\alpha(\theta_1,\theta_2)$ in \eqref{A-al}, by dropping the factors $e^{\pm i\alpha\pi}$, it suffices to prove
\begin{equation}\label{equ:tg2red}
\begin{split}
\Big\|\int_0^\infty \int_0^{2\pi } K_{G_1^\pm}(r_1,r_2;\theta_1-\theta_2) f(r_2,\theta_2) r_2 dr_2 d\theta_2\Big\|_{L^p(\R^2)}\leq C\|f\|_{L^p(\R^2)}
\end{split}
\end{equation}
where
\begin{equation}
\begin{split}
K_{G_1^\pm}(r_1,r_2;\theta_1-\theta_2)&=e^{\pm i|x-y|} (1+|x-y|)^{-\frac32-\delta} a_\pm (|x-y|)
\mathbbm{1}_{I}(|\theta_1-\theta_2|),
\end{split}
\end{equation}
and $I=[0,\pi]$ and $[\pi,2\pi)$. We will only consider the case that $I=[0,\pi]$. Indeed, if $I=[\pi, 2\pi)$, by replacing $\theta_2$ by $\theta_2+\pi$, we note that $|x-y|=\sqrt{r_1^2+r_2^2-2r_1r_2\cos(\theta_1-\theta_2)}$, the same argument works. From now on, we fix $I=[0,\pi]$.
Using the standard partition of unity
$$\beta_0(r)=1-\sum_{j\geq1}\beta(2^{-j}r),\quad \beta\in\mathcal{C}_c^\infty\big(\big[\tfrac38,\tfrac43\big]\big), \quad \beta_0\in\mathcal{C}_c^\infty\big([0,1]\big),$$ we decompose
\begin{equation}\label{KGj}
\begin{split}
K_{G_1^\pm}(r_1,r_2;\theta_1-\theta_2)
=:\sum_{j\geq 0}K_{G_1^\pm}^j(r_1,r_2;\theta_1-\theta_2),
\end{split}
\end{equation}
where
$$K_{G_1^\pm}^j(r_1,r_2;\theta_1-\theta_2)=\beta(2^{-j}|x-y|)K_{G_1^\pm}(r_1,r_2;\theta_1-\theta_2), \quad j\geq1$$
and $$K_{G_1^\pm}^0(r_1,r_2;\theta_1-\theta_2)=\beta_0(|x-y|)K_{G_1^\pm}(r_1,r_2;\theta_1-\theta_2).$$
Then we have
\begin{proposition}\label{prop:TGj} Let $T_{G_1^\pm}^j$ be the operator associated with kernel $K_{G_1^\pm}^j(r_1,r_2;\theta_1-\theta_2)$
given in \eqref{KGj} and let $p>4$.
Then
 \begin{equation}\label{est:TGj}
 \|T_{G_1^\pm}^j\|_{L^p\to L^p}\leq C 2^{j[\delta_c(p,2)-\delta]}, \quad  \|T_{G_1^\pm}^j\|_{L^2\to L^2}\leq C 2^{-j\delta} \quad \forall j\geq0.
\end{equation}
\end{proposition}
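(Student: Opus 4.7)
The plan is to establish the two parts of \eqref{est:TGj} in turn, exploiting that on the support of $K_{G_1^\pm}^j$ one has $|x-y|\sim 2^j$, so the amplitude is of size $\sim 2^{-j(3/2+\delta)}$ while the oscillation $e^{\pm i|x-y|}$ has frequency of order one. As a preliminary reduction I would write $A(\theta)=\alpha+\tilde A(\theta)$ with $\tilde A$ of zero mean and set $\Phi(\theta)=\int_0^\theta\tilde A(\theta')\,d\theta'$, so that $\int_{\theta_1}^{\theta_2}A=\alpha(\theta_2-\theta_1)+\Phi(\theta_2)-\Phi(\theta_1)$. Conjugating $T_{G_1^\pm}^j$ by the unitary $f\mapsto e^{i\Phi(\theta)}f$ on $L^p$ absorbs the $\Phi$-contribution without changing any norm, so what remains from $A_\alpha$ is the bounded phase $e^{i\alpha(\theta_2-\theta_1)}$ together with the indicator $\mathbbm{1}_I$; I focus on $I=[0,\pi]$ since the other branch reduces to it via $\theta_2\mapsto\theta_2+\pi$.

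For the $L^2\to L^2$ bound I would run a $TT^\ast$-style argument. The kernel of $(T_{G_1^\pm}^j)^\ast T_{G_1^\pm}^j$ is the $z$-integral $\int K_{G_1^\pm}^j(z,x)\overline{K_{G_1^\pm}^j(z,y)}\,dz$, carrying an effective phase $\pm(|z-x|-|z-y|)$ whose $z$-gradient vanishes only on the line through $x$ and $y$. Integration by parts gives rapid decay in the transverse direction, while Lemma \ref{lem:VCL} provides the tangential decay $\sim 2^{-j/2}|x-y|^{-1/2}$ near the stationary points. Combined with the size of the amplitude ($\sim 2^{-2j(3/2+\delta)}$ on a $z$-region of area $\sim 2^{2j}$) and a Schur test in $(x,y)$, this yields $\|(T_{G_1^\pm}^j)^\ast T_{G_1^\pm}^j\|_{L^2\to L^2}\lesssim 2^{-2j\delta}$, hence the claimed $L^2$ bound.

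For the $L^p\to L^p$ bound with $p>4$ I would combine a cheap $L^\infty$ estimate with a Carleson--Sj\"olin-type $L^4$ estimate and interpolate. The $L^\infty\to L^\infty$ bound is immediate from Schur's test: $\int|K_{G_1^\pm}^j(x,y)|\,dy\lesssim 2^{-j(3/2+\delta)}\cdot 2^{2j}=2^{j(1/2-\delta)}$. For the $L^4$ estimate, observe that the phase $|x-y|$ satisfies the Carleson--Sj\"olin curvature condition, its level sets being circles of nonvanishing curvature; Wolff's variable-coefficient Plancherel theorem applied to the oscillatory integral operator with kernel $K_{G_1^\pm}^j$ then yields $\|T_{G_1^\pm}^j\|_{L^4\to L^4}\lesssim 2^{-j\delta}$. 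Riesz--Thorin interpolation between these two endpoints produces $\|T_{G_1^\pm}^j\|_{L^p\to L^p}\lesssim 2^{j[1/2-2/p-\delta]}=2^{j[\delta_c(p,2)-\delta]}$ for every $p>4$.

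The main obstacle is controlling the kernel as $\theta_1-\theta_2\to\pi$, i.e.\ when $x$ and $y$ are nearly antipodal about the origin. There the straight segment from $x$ to $y$ grazes the solenoid, the indicator $\mathbbm{1}_I$ has its jump, and the angular derivative $\partial_{\theta_1-\theta_2}|x-y|=r_1r_2\sin(\theta_1-\theta_2)/|x-y|$ vanishes so that the Carleson--Sj\"olin phase $|x-y|$ becomes flat in the tangential direction. I would address this by introducing a further dyadic partition in $|\theta_1-\theta_2-\pi|$: on each non-degenerate scale the Carleson--Sj\"olin analysis goes through after an anisotropic rescaling in $(\theta_1-\theta_2)$, while on the innermost residual scale the explicit decay $(1+|x-y|)^{-3/2-\delta}$ from \eqref{ker-G} allows for an absolute summation. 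This near-antipodal region parallels what occurs for the diffractive kernel in Proposition \ref{prop:TDj}, but for $T_{G_1^\pm}^j$ it remains a purely geometric contribution that must be handled inside the Carleson--Sj\"olin framework.
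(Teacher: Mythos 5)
Your overall architecture (dyadic pieces in $|x-y|\sim 2^j$, a trivial $L^\infty$ Schur bound $2^{j(\frac12-\delta)}$, a key $L^4$ input, and interpolation) is reasonable and the numerology matches \eqref{est:TGj}, but the central step — the claim $\|T_{G_1^\pm}^j\|_{L^4\to L^4}\lesssim 2^{-j\delta}$ via "Carleson--Sj\"olin/Wolff" — is exactly where the difficulty of this proposition lives, and as written it is assumed rather than proved. Off the jump set the paper does quote the classical convolution estimates (its Case 2, via Sogge), but where $\mathbbm{1}_{[0,\pi]}(|\theta_1-\theta_2|)$ jumps the operator is neither a convolution nor an oscillatory integral with smooth amplitude, so the variable-coefficient Plancherel theorem does not apply as stated. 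Your diagnosis of the obstruction is also misplaced: the phase $|x-y|$ does not see the origin, so it satisfies the Carleson--Sj\"olin condition uniformly and nothing is ``flat'' at antipodal pairs; what degenerates is the amplitude (the diffraction jump tied to the origin) and the effective non-degeneracy constants as $r_2\to 0$ — this is why the paper's bilinear kernel bound (Lemma \ref{lem:keyker}, via the explicit determinant of Lemma \ref{lem:det}) carries the factor $r_2^3$ and why its final $r_2$-integration needs $p>4$. Your proposed patch (dyadic decomposition in $|\theta_1-\theta_2-\pi|$ at scales $\rho$, ``anisotropic rescaling'', trivial bound at the innermost scale) is only a heuristic: the trivial Schur bound of the $\rho$-piece is $2^{j(\frac12-\delta)}\rho$, which reaches the target $2^{-j\delta}$ only at $\rho\sim 2^{-j/2}$ with no margin, and on intermediate scales a genuine gain in $\rho$ would have to be proved (it is not a formal consequence of Carleson--Sj\"olin, since the smoothed cutoffs have angular derivatives of size $\rho^{-1}$ and the operator is still not translation invariant); without such a gain the sum over the $\sim j$ scales costs at least a factor $j$, and the clean bound $2^{j[\delta_c(p,2)-\delta]}$ near $p=4$ is lost. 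The paper replaces this entire step by localizing $\theta_2$ to an $\epsilon$-interval and, in the jump region, running a C\'ordoba-type squaring argument at fixed $r_2$ (Proposition \ref{prop:key}, Lemma \ref{lem:key1}), where the jump merely truncates the $\theta_1$-integration and a single integration by parts, quantified by Lemma \ref{lem:det}, yields the $(1+2^jr_2^3)^{-1}$ decay; that bilinear analysis is the missing ingredient in your outline.

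A smaller but real gap: your $TT^\ast$ argument for the $L^2$ bound also ignores the jump. Integrating by parts in $z$ across the rays where $\theta_z-\theta_x=\pm\pi$ or $\theta_z-\theta_y=\pm\pi$ produces boundary terms that are not negligible a priori and would need a separate one-dimensional stationary-phase treatment along those rays. The paper avoids this entirely by observing that for fixed $r_1,r_2$ the kernel is a convolution in the angular variable, so Plancherel in $\theta$ plus the van der Corput bound \eqref{equ:kernconfrou} (Lemma \ref{lem:VCL}), using $|\partial_\theta^2 d_G|\gtrsim r_1r_2$, gives the multiplier bound $(2^jr_1r_2)^{-1/2}$ with the jump rendered harmless; you would either need to adopt that device or supply the boundary-term analysis.
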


We postpone the proof of \eqref{est:TGj} to next section.
Now, by using Proposition \ref{prop:TGj} and summing a geometric series, when $p>4$ and $p=2$, thus we prove \eqref{equ:tg2red}, hence
\eqref{equ:tg1red} follows. While for $2\leq p\leq 4$,  we obtain \eqref{equ:tg1red} by using the interpolation. The remaining cases $1\leq p< 2$ of \eqref{equ:tg1red}  follow from duality.

\subsection{The estimate of $ T_{D^\pm_1}$.} In this subsection, we prove
\begin{equation}\label{equ:td1red}
  \|T_{D^\pm_1}\|_{L^{p}(\R^2)\to L^{p}(\R^2)}\leq C.
\end{equation}
We only consider the $+$ case in the following since the same argument works for $-$ case.
To light the notation, we replace ${D_1^\pm}$ by $D$ and only consider the $+$ case. More precisely, it suffices to prove
\begin{equation}\label{equ:td2red}
\begin{split}
\Big\|\int_0^\infty \int_0^{2\pi } K_{D}(r_1,r_2;\theta_1-\theta_2) f(r_2,\theta_2) r_2 dr_2 d\theta_2\Big\|_{L^p(\R^2)}\leq C\|f\|_{L^p(\R^2)},
\end{split}
\end{equation}
where 
\begin{equation}
\begin{split}
K_{D}(r_1,\theta_1;r_2,\theta_2)&= \int_0^\infty e^{ i|{\bf n}_s|} (1+|{\bf n}_s|)^{-\frac32-\delta} a(|{\bf n}_s|)\, B_\alpha(s,\theta_1,\theta_2)\;ds.
\end{split}
\end{equation}
and $a$ satisfy \eqref{est:aj}  and
\begin{equation*}
\begin{split}
&B_{\alpha}(s,\theta_1,\theta_2)= -\frac{1}{4\pi^2}e^{-i\alpha(\theta_1-\theta_2)+i\int_{\theta_2}^{\theta_{1}} A(\theta') d\theta'}  \Big(\sin(|\alpha|\pi)e^{-|\alpha|s}\\
&\qquad +\sin(\alpha\pi)\frac{(e^{-s}-\cos(\theta_1-\theta_2+\pi))\sinh(\alpha s)-i\sin(\theta_1-\theta_2+\pi)\cosh(\alpha s)}{\cosh(s)-\cos(\theta_1-\theta_2+\pi)}\Big).
 \end{split}
\end{equation*}
Note that $K_{D}(r_1,\theta_1;r_2,\theta_2)=0$ when $\alpha=0$. We always assume $\alpha\neq 0$.\vspace{0.2cm}

It suffices to prove the three estimates, for $\ell=1,2,3$ and $1\leq p\leq \infty$
\begin{equation}\label{equ:kdellred}
\Big\|\int_0^\infty \int_0^{2\pi } K^{\ell}_D(r_1,r_2;\theta_1-\theta_2) f(r_2,\theta_2) r_2 dr_2 d\theta_2\Big\|_{L^p(\R^2)}\leq C\|f\|_{L^p(\R^2)},
\end{equation}
where
\begin{align}\label{kerD12}
K^1_D(r_1,r_2;\theta_1-\theta_2)&=\int_0^\infty e^{ i|{\bf n}_s|} (1+|{\bf n}_s|)^{-\frac32-\delta}  a(|{\bf n}_s|)\, e^{-|\alpha|s}\;ds,\\\nonumber
K^2_D(r_1,r_2;\theta_1-\theta_2)&=\int_0^\infty e^{ i|{\bf n}_s|} (1+|{\bf n}_s|)^{-\frac32-\delta}  a(|{\bf n}_s|)\, \\
 & \qquad\qquad\qquad\times \frac{(e^{-s}-\cos(\theta_1-\theta_2+\pi))\sinh(\alpha s)}{\cosh(s)-\cos(\theta_1-\theta_2+\pi)}\;ds,
\end{align}
and
\begin{equation}\label{kerD3}
\begin{split}
K^3_D(r_1,r_2;\theta_1-\theta_2)&=\int_0^\infty e^{ i|{\bf n}_s|} (1+|{\bf n}_s|)^{-\frac32-\delta}  a(|{\bf n}_s|)\, \\
&\qquad\qquad\times\frac{\sin(\theta_1-\theta_2+\pi)\cosh(\alpha s)}{\cosh(s)-\cos(\theta_1-\theta_2+\pi)}\;ds.
\end{split}
\end{equation}
Using the partition of unity again
$$\beta_0(r)=1-\sum_{j\geq1}\beta(2^{-j}r),\quad \beta\in\mathcal{C}_c^\infty\big(\big[\tfrac38,\tfrac43\big]\big),$$ we decompose
\begin{equation}\label{KDlj}
K^{\ell}_D(r_1,r_2;\theta_1-\theta_2)
=:\sum_{j\geq 0}K^{\ell,j}_D(r_1,r_2;\theta_1-\theta_2),
\end{equation}
where for $j\geq1$ and $\ell=1,2,3$
$$K_D^{\ell,j}(r_1,r_2;\theta_1-\theta_2)=\beta(2^{-j}(r_1+r_2))K^\ell_D(r_1,r_2;\theta_1-\theta_2)$$
and $K_D^{\ell,0}(r_1,r_2;\theta_1-\theta_2)=\beta_0(r_1+r_2)K^\ell_D(r_1,r_2;\theta_1-\theta_2)$.\vspace{0.2cm}

For our purpose, we need the following proposition, which is an analogue of Proposition \ref{prop:TGj}.
\begin{proposition}\label{prop:TDj} For $\ell=1,2,3$, let $T_{D}^{\ell,j}$ be the operator associated with kernel $K_{D}^{\ell,j}(r_1,r_2;\theta_1-\theta_2)$ given in \eqref{KDlj} and let $p>4$.
Then, for $j\geq0$ and $0<\epsilon'\ll1$, it holds
 \begin{equation}\label{est:TDj}
 \begin{split}
    \|T_{D}^{\ell,j}\|_{L^2\to L^2}&\leq C 2^{-j\delta} , \\
     \|T_{D}^{\ell,j}\|_{L^p\to L^p}&\leq C 2^{j[\delta_c(p,2)-\delta]}\times\begin{cases} 1\quad &6\leq p\\
    2^{j\epsilon'}, \quad &4\leq p<6.
    \end{cases}
    \end{split}
\end{equation}
\end{proposition}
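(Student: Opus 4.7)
My plan is to mirror the approach used for Proposition \ref{prop:TGj}, establishing the endpoint $L^2$, $L^\infty$ bounds and the critical $L^4$, $L^6$ bounds separately, then interpolating. The principal new difficulty, beyond what appeared in the $G_1^\pm$ case, is the $s$-integration together with the diffractive singularity of $B_\alpha(s,\theta_1,\theta_2)$ at $(s,\theta_1-\theta_2)=(0,\pi)$. As a preliminary reduction, I would change variables from $s$ to $\tau=|{\bf n}_s|=\sqrt{r_1^2+r_2^2+2r_1r_2\cosh s}$, which ranges over $[r_1+r_2,\infty)$ with Jacobian
$$ds=\frac{2\tau\,d\tau}{\sqrt{(\tau^2-(r_1+r_2)^2)(\tau^2-(r_1-r_2)^2)}}.$$
After this substitution each $K^{\ell,j}_D$ carries an oscillatory factor $e^{i\tau}$ with amplitude $(1+\tau)^{-3/2-\delta}a(\tau)$ times a transformed $B^\ell$, and a further dyadic decomposition in $\tau$ produces Carleson--Sj\"olin-type kernel pieces modulated by the Aharonov--Bohm factor.

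For the $L^2\to L^2$ bound of order $2^{-j\delta}$ I would exploit that $K^{\ell,j}_D$ depends on the angles only through $\theta_1-\theta_2$: expanding in angular Fourier series diagonalizes the operator, and on each mode the radial kernel is controlled by the dyadic decay $(1+\tau)^{-3/2-\delta}\lesssim 2^{-j(3/2+\delta)}$ on the support $r_1+r_2\sim 2^j$, combined with Lemma \ref{lem:VCL} (Van der Corput) applied to the $\tau$-integration near the endpoint $\tau=r_1+r_2$. The $L^\infty\to L^\infty$ bound of order $2^{j(1/2-\delta)}$ follows by Young's inequality from the pointwise decay of the kernel and the dyadic volume $\sim 2^{2j}$ in $(r_2,\theta_2)$, using the integrability estimates \eqref{equ:ream1}--\eqref{equ:ream3} to bound the $s$-integrals uniformly.

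For $p\geq 6$ I would apply a two-dimensional Carleson--Sj\"olin oscillatory integral estimate to each $\tau$-dyadic piece (combined with angular Plancherel to handle the degeneracy of the phase $\tau(r_1,r_2)$ in the angular variables) and then interpolate with the $L^2$ endpoint to obtain the sharp exponent $\delta_c(p,2)-\delta$. The main obstacle will be the $L^4\to L^4$ bound for $4\leq p<6$, which sits at the critical endpoint of Carleson--Sj\"olin: the diffractive singularity of $B_\alpha$ near $\theta_1-\theta_2=\pi$, coupled with the endpoint $\tau=r_1+r_2$ where the Jacobian itself is singular, places the problem outside the scope of the classical theorem. My plan is to isolate this bad region by a further cutoff of width $\sim 2^{-j\epsilon'}$, handle the main non-singular region by Wolff's variable-coefficient Plancherel theorem \cite{Wolff} along the lines of the proof of Proposition \ref{prop:TGj}, and absorb the small bad region using the $L^\infty$ bound at the cost of the claimed $2^{j\epsilon'}$ loss.
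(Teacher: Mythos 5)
Your reduction (rescaling, pointwise/size estimates away from the problem region, angular Fourier analysis for $L^2$) is broadly compatible with the paper, but the core mechanism you invoke for $p\ge 6$ and for the main region when $4\le p<6$ does not exist for the diffractive term. After the $s$-integration (equivalently your $\tau$-substitution), the oscillation of $K_D^{\ell,j}$ is carried by $e^{i|{\bf n}_s|}$ with $|{\bf n}_s|=\sqrt{r_1^2+r_2^2+2r_1r_2\cosh s}$, which is \emph{independent of $\theta_1,\theta_2$} and, at the stationary point $s=0$, reduces to $e^{i2^j(r_1+r_2)}$ — a tensor product in $(r_1,r_2)$ with identically vanishing mixed Hessian in $(x,y)$. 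There is no rotational curvature, so Carleson--Sj\"olin/H\"ormander-type oscillatory integral operator bounds and Wolff's variable-coefficient Plancherel theorem (the tools that drove Proposition \ref{prop:TGj}, where the phase $|x-y|$ genuinely depends on the angles and Lemma \ref{lem:det} provides the curvature) simply do not apply here. They are also not needed: the paper extracts the full gain from Van der Corput in $s$ (giving the pointwise bound $2^{-j(\frac32+\delta)}(1+2^jr_1r_2)^{-\frac12}$ of Lemmas \ref{lem:ker-est12}--\ref{lem:ker-est3}) and then uses the purely elementary Proposition \ref{lem:kerlplq} (Minkowski/H\"older, where the restriction $p\ge 4$ enters through $|y|^{-2/p}\in L^{p'}$); note in particular that a Schur/sup-type argument fails on the region $2^jr_1r_2\lesssim 1$, which is why the $L^p$-average in $x$, not an $L^\infty$ bound, is the right vehicle.

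The genuine gap is your treatment of the diffractive angle $\theta_1-\theta_2\approx\pi$ for $\ell=3$, which is where the actual difficulty of the proposition sits. There the stationary-phase gain degenerates: the model kernel behaves like $2^{-j(\frac32+\delta)}(1+2^jr_1r_2\theta^2)^{-\frac12}$ with $\theta$ the angular deviation, cf.\ \eqref{est:I}. Cutting out a window $|\theta|\lesssim 2^{-j\epsilon'}$ and ``absorbing it by the $L^\infty$ bound'' cannot give the claimed $2^{j\epsilon'}$-loss bound for $4\le p<6$: a Schur-type bound for the bad piece is at best of size $2^{-j(\frac32+\delta)}2^{-j\epsilon'}$ (the small-$r_1r_2$ region kills any better sup bound), and interpolating it with the $L^2$ bound $2^{-j(\frac32+\delta)}2^{-j/2}$ gives $2^{-j(\frac32+\delta)}2^{-j(\frac1p+c\epsilon')}$ at exponent $p$, which misses the target $2^{-j(\frac32+\delta)}2^{-j(\frac2p-\epsilon')}$ by essentially $2^{j/p}$. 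The paper handles exactly this region differently: it isolates the model kernel $H$ via the Morse lemma (Lemma \ref{lem:ker-est3}), proves the lossy $L^4$ bound by Young's inequality in the angular variable using $\int_0^{\epsilon}(1+2^jr_1r_2\theta^2)^{-1/2}d\theta\lesssim(1+2^jr_1r_2)^{-\frac12+\epsilon'}$ (see \eqref{equ:Hpqest4}), and obtains the \emph{sharp} bound for $p\ge 6$ by an angular Littlewood--Paley/Bernstein argument based on the Fourier bound \eqref{FtH} for $\tilde H$, finally interpolating to cover $4\le p<6$. Your proposal contains no substitute for this angular-frequency analysis, and your $p\ge6$ route (``Carleson--Sj\"olin per $\tau$-dyadic piece, then interpolate with $L^2$'') cannot reach the sharp exponent for the same degenerate-phase reason; also recall that plain $L^2$--$L^\infty$ interpolation only yields $2^{j(\frac12-\frac1p-\delta)}$, short of $2^{j[\delta_c(p,2)-\delta]}=2^{j(\frac12-\frac2p-\delta)}$, so the $L^\infty$ endpoint you compute is of limited help.
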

With  the Proposition \ref{prop:TDj} in hand, we obtain \eqref{equ:kdellred} by using same argument in the proof of $L^p$-bounds of $T_{G_1^\pm}^j$ from Proposition \ref{prop:TGj},
see the details after Proposition \ref{prop:TGj}. We remark here that even there is an arbitrary small loss when $4\leq p<6$, one can sum the geometric series
by choosing small $\epsilon'$ such that $0<\epsilon'\leq \frac12[\delta_c(p,2)-\delta]$. 

\vspace{0.2cm}

Therefore, we complete the proof of the sufficiency part of \eqref{est:BR} in Theorem \ref{thm:LA0}, if we could prove  Proposition \ref{prop:TGj} and Proposition \ref{prop:TDj}. 

\medskip

\section{The proof of   Proposition \ref{prop:TGj}}\label{sec:TGj}

Our main task of this section is to prove Proposition \ref{prop:TGj} by making use of  the oscillatory integral theory by Stein \cite{Stein} and H\"ormander \cite{Hor}, we also refer to Sogge \cite[Corollary 2.2.3]{Sogge1}. However, there is a jump function $\mathbbm{1}_{[0,\pi]}(|\theta_1-\theta_2|)$ in the kernel so that the estimates are not trivial
consequence of the standard theory. To light the notation, we replace ${G_1^\pm}$ by $G$ and only consider the $+$ case. \vspace{0.2cm}

We first consider the simplest one if $j=0$. Since $\beta_0\in L^{1}(\R^2)$, for $1\leq p\leq \infty$, then Young's inequality gives
$$ \|T_{G}^0f\|_{L^p}\leq \Big\|\int_{\R^2} \beta_0(|x-y|) f(y) dy\Big\|_{L^p(\R^2)}\leq C\|f\|_{L^p(\R^2)}.
$$

From now on, we assume that $j\geq 1$.
Let $0<\epsilon\ll 1$ small enough, since $[0,2\pi]$ is compact, we cover
$[0,2\pi]$ by $\Theta_\epsilon=\{\theta: |\theta-\theta_{k,0}|\leq \epsilon\}$ with fixed $\{\theta_{k,0}\}_{k=1}^{N}\in [0,2\pi]$ where $N$ is finite.
 Noting that
$$K_G^j(r_1,r_2;\theta_1-\theta_2)=\beta(2^{-j}|x-y|)K_G(r_1,r_2;\theta_1-\theta_2),$$
and using translation invariance in norm $L^q_\theta$,  we only need to prove that  \eqref{est:TGj} holds for
\begin{equation}\label{assu:f}
\text{supp} f\subset \{(r_2,\theta_2): r_2\in(0,\infty),  \theta_2\in[0,\epsilon)\},\quad 0<\epsilon\ll 1.
\end{equation}
Note that if $\theta_2\in[0,\epsilon]$, then
\begin{equation}\label{I-theta}
\mathbbm{1}_{[0,\pi]}(|\theta_1-\theta_2|)=
\begin{cases}1\quad \text{when}\,\theta_1\in [0,\pi],\\
0\quad \text{when}\,\theta_1\in [\pi+\epsilon,2\pi].
\end{cases}
\end{equation}
We remark here that there is a jump if $\theta_1\in [\pi,\pi+\epsilon]$ which is the most difficult case.
So for our aim, we divide the proof into three cases.

{\bf Case 1: $\theta_1\in[\pi+\epsilon,2\pi].$}  Since
$$\mathbbm{1}_{[\pi+\epsilon,2\pi]}(\theta_1) K_G^j(r_1,r_2;\theta_1-\theta_2)=0,$$
we have $\mathbbm{1}_{[\pi+\epsilon,2\pi]}(\theta_1)T_{G}^j  f=0$. Obviously, for all $1\leq p\leq +\infty$
$$\Big\|\mathbbm{1}_{[\pi+\epsilon,2\pi]}(\theta_1)T_{G}^j f\Big\|_{L^p(\R^2)}\leq C 2^{j[\delta_c(p,2)-\delta]}\|f\|_{L^p}.$$ \vspace{0.2cm}

{\bf Case 2: $\theta_1\in[0,\pi].$}
In this case, by \eqref{I-theta}, we can drop the jump function $\mathbbm{1}_{[0,\pi]}(|\theta_1-\theta_2|)$ to obtain 
 $$\mathbbm{1}_{[0,\pi]}(\theta_1) K_G^j(r_1,r_2;\theta_1-\theta_2)=\mathbbm{1}_{[0,\pi]}(\theta_1) \beta(2^{-j}|x-y|)e^{i|x-y|} (1+|x-y|)^{-\frac32-\delta} a(|x-y|).$$
Then, the jump function disappears. Hence, we can use the oscillation integral results (e.g. \cite[Lemma 2.3.4]{Sogge1}) to estimate
\begin{equation}
\begin{split}
&\Big\|\mathbbm{1}_{[0,\pi]}(\theta_1)T_{G}^j f\Big\|_{L^p(\R^2)}\\
\leq& \Big\|\int_{\R^2}\beta(2^{-j}|x-y|)e^{i|x-y|} (1+|x-y|)^{-3/2-\delta} a(|x-y|)f(y) dy \Big\|_{L^p(\R^2)}.
\end{split}
\end{equation}
Define 
$$K^j_G(x)=\beta(2^{-j}|x|)e^{i|x|} (1+|x|)^{-3/2-\delta} a(|x|).$$
We observe that the $L^p-L^p$ operator norm of convolution with $K^j_G$ equals the norm of convolution with the dilated kernel
\begin{equation}
\begin{split}
\tilde{K}^j_G(x)=2^{2j}K^j_G(2^j x)&=2^{j(\frac12-\delta)} \beta(|x|)e^{i2^j |x|} (2^{-j}+|x|)^{-3/2-\delta} a(2^j |x|)\\
&=:2^{j(\frac12-\delta)}b(2^j, x)e^{i2^j |x|}.
\end{split}
\end{equation}
Recalling that $a$ satisfies \eqref{est:aj}, we see that $\big|(\frac{\partial}{\partial x})^\alpha b(2^j, x)\big|\leq C_\alpha$. Then 
\cite[Lemma 2.3.4 or $(2.3.9')$]{Sogge1} implies
$$\|\tilde{K}^j_G(x)\ast f\|_{L^p(\R^2)}\leq C 2^{j(\frac12-\delta-\frac2p)}\|f\|_{L^p(\R^2)}=C2^{j[\delta_c(p,2)-\delta]}\|f\|_{L^p}, \quad p>4.$$
For $p=2$, we use \cite[Theorem 2.1.1]{Sogge1} to obtain 
$$\|\tilde{K}^j_G(x)\ast f\|_{L^2(\R^2)}\leq C 2^{j(\frac12-\delta)} 2^{-\frac j2} \|f\|_{L^2(\R^2)}=C 2^{-j\delta} \|f\|_{L^2(\R^2)}.$$
Hence, we have proved \eqref{est:TGj} in this case.\vspace{0.2cm}

{\bf Case 3: $\theta_1\in[\pi,\pi+\epsilon].$} For convenience, we again define
$$b(2^j, |x-y|)=\beta(|x-y|)(2^{-j}+ |x-y|)^{-3/2-\delta} a(2^j|x-y|)$$
thus for every $\alpha\geq 0$, we see that 
\begin{equation}\label{est:bj}
\big|\partial_r^\alpha b(2^j, r)\big|\leq C_\alpha
\end{equation} due to  the compact support of $\beta$ and the fact that $a$ satisfies \eqref{est:aj}.
To prove \eqref{est:TGj} in this case, we first
recall the kernel of $T_{G}^j$
$$K^j_G(r_1,r_2;\theta_1-\theta_2)=\beta(2^{-j}|x-y|)e^{i|x-y|} (1+ |x-y|)^{-3/2-\delta} a(|x-y|)
\mathbbm{1}_{[0,\pi]}(|\theta_1-\theta_2|),$$
and we rewrite
\begin{equation}\label{tildeK}
\begin{split}
&\tilde{K}^j_G(r_1,r_2;\theta_1-\theta_2)=K^j_G(2^jr_1,2^jr_2;\theta_1-\theta_2)
\\&=\beta(|x-y|)e^{i2^j |x-y|} (2^{-j}+ |x-y|)^{-3/2-\delta} a(2^j|x-y|)\mathbbm{1}_{[0,\pi]}(|\theta_1-\theta_2|)\\
&=2^{-j(\frac32+\delta)} b(2^j, |x-y|)e^{i2^j |x-y|}\mathbbm{1}_{[0,\pi]}(|\theta_1-\theta_2|).
\end{split}
\end{equation}
We use the dilated kernel $\tilde{K}^j_G$ to define the operator
\begin{equation}\label{tildeT}
\begin{split}
\tilde{T}_{G_1}^j f &=\int_0^\infty\int_0^{2\pi} \tilde{K}^j_G(r_1,r_2;\theta_1-\theta_2)\eta(\theta_1-\theta_2) f(r_2,\theta_2) \,d\theta_2 \,r_2 dr_2,
\end{split}
\end{equation}
where $ \eta(s) \in C_c^\infty ([\pi-2\epsilon,\pi+2\epsilon])$ such that $\eta(s)=1$ when $s\in[\pi-\epsilon,\pi+\epsilon]$.
Then one can check the fact that
$$\mathbbm{1}_{[\pi,\pi+\epsilon]}T_{G}^j \big(\mathbbm{1}_{[0,\epsilon]} f(r_2,\theta_2)\big)(r_1,\theta_1)=2^{2j}\mathbbm{1}_{[\pi,\pi+\epsilon]}\tilde{T}_{G}^j \big(\mathbbm{1}_{[0,\epsilon]} f(2^jr_2,\theta_2)\big)(2^{-j}r_1,\theta_1).$$ To prove \eqref{est:TGj} in this case, 
 it suffices to prove the following proposition.
 \begin{proposition}\label{prop:key} Let $\tilde{T}_{G}^j $ be the operator defined in \eqref{tildeT} with kernel \eqref{tildeK}. Then
 \begin{equation}\label{est:TGj'2}
\big\|\tilde{T}_{G}^j f\big\|_{L^2(\R^2)} \leq C2^{-j(\frac32+\delta)} 2^{-\frac{j}{2}}\|f\|_{L^{2}(\R^2)},
\end{equation}
and
\begin{equation}\label{est:TGj'4}
\big\|\tilde{T}_{G}^j f\big\|_{L^p(\R^2)} \leq C2^{-j(\frac32+\delta)} 2^{-\frac{2j}{p}}\|f\|_{L^{p}(\R^2)},\quad \text{for}\quad p>4,
\end{equation}
where $f$ satisfies \eqref{assu:f}.
\end{proposition}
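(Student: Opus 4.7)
The bounds sought, after pulling out the prefactor $2^{-j(3/2+\delta)}$, read as $\lambda^{-1/2}$ in $L^2$ and $\lambda^{-2/p}$ in $L^p$ for $p>4$, with $\lambda=2^j$. These are precisely the two-dimensional Carleson--Sj\"olin/H\"ormander oscillatory integral bounds for a kernel of the form $e^{i\lambda|x-y|}a(x,y)$, since $|x-y|$ satisfies the rotational curvature condition on $\R^2$ and $b(2^j,|x-y|)$ behaves as a uniformly smooth amplitude by \eqref{est:bj}. The plan is therefore to reduce to Sogge's variable-coefficient Plancherel (Lemma~2.1.1 of \cite{Sogge1}) for $L^2$ and his Carleson--Sj\"olin estimate (Lemma~2.3.4 or Corollary~2.2.3 of \cite{Sogge1}) for $L^p$, $p>4$. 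The genuinely new obstacle is the indicator $\mathbbm{1}_{[0,\pi]}(|\theta_1-\theta_2|)$: its jump sits exactly at $\theta_1-\theta_2=\pi$, where $\partial_{\theta_2}|x-y|=-r_1r_2\sin(\theta_1-\theta_2)/|x-y|$ vanishes, so no integration by parts in $\theta_2$ can smooth it out.

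To overcome this, I would rescale around the critical angle. Set $u=\pi-(\theta_1-\theta_2)$, so $|u|\leq 2\epsilon$ on the support and the indicator becomes $\mathbbm{1}_{\{u\geq 0\}}$. The Taylor expansion
\[
|x-y| = (r_1+r_2) - \frac{r_1 r_2}{2(r_1+r_2)}u^2 + O(u^4)
\]
shows the phase in $u$ is of Fresnel type with $\partial_u^2(2^j|x-y|)\sim 2^j$, so the natural scale is $|u|\sim 2^{-j/2}$. Introduce a partition of unity $1=\beta_0(2^{j/2}u)+\sum_{1\leq 2^k\leq 2^{j/2}}\beta(2^{-k}\cdot 2^{j/2}u)$ and split $\tilde T_G^j$ accordingly into an innermost piece on which $|u|\lesssim 2^{-j/2}$ and outer pieces with $|u|\sim 2^{k-j/2}$. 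On each outer piece, multiplying in $\mathbbm{1}_{\{u\geq 0\}}$ produces a smooth amplitude supported on a one-sided dyadic interval in $u$, so Sogge's theorems apply and yield the target bounds with a geometrically summable loss in $k$. On the innermost piece the discontinuity is absorbed: the $u$-integral takes the form of a truncated Fresnel integral $\int_0^{c\,2^{-j/2}} e^{-i\gamma 2^j u^2}\phi(u)\,du$, which is of size $2^{-j/2}$ uniformly in smooth $\phi$ by Van der Corput (Lemma~\ref{lem:VCL} with $k=2$), matching the stationary-phase gain a smooth amplitude would produce.

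For $L^2$, I would then run the $TT^*$ method: the kernel of $\tilde T_G^j(\tilde T_G^j)^*$ is an oscillatory integral in $z$ with phase $2^j(|x-z|-|y-z|)$, and the Carleson--Sj\"olin rotational curvature of $|\cdot-z|$ yields a $2^{-j}$ factor after stationary phase in $z$, leading via Schur's test to the desired $\|\tilde T_G^j\|_{L^2\to L^2}^2\lesssim 2^{-j}$. For $L^p$, $p>4$, I would apply the two-dimensional Carleson--Sj\"olin theorem on each dyadic piece and sum in $k$. The main obstacle, and the point where genuine care is needed, is the uniform control of the innermost Fresnel piece together with the summability of the outer-piece amplitudes, whose derivatives blow up as $k$ grows; both issues should be resolved by exploiting the uniform nonvanishing of $\partial_{r_1}|x-y|$ and $\partial_{r_2}|x-y|$ near $u=0$, which allows the H\"ormander argument to be executed in the radial variables even when the angular oscillation has been exhausted.
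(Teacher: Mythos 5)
Your reduction to oscillatory–integral theory is the right instinct, and your treatment of the innermost region $|u|\lesssim 2^{-j/2}$ is fine (indeed even the trivial Schur bound $\sim 2^{-j/2}\le 2^{-2j/p}$ suffices there), but the two load-bearing steps have genuine gaps. First, the $L^2$ argument: the $T T^*$ kernel $\int e^{i2^j(|x-z|-|y-z|)}a\,dz$ does \emph{not} gain a factor $2^{-j}$, because the phase $|x-z|-|y-z|$ is constant along the line through $x$ and $y$ (its $z$-Hessian has rank one), so stationary phase only yields $(1+2^j|x-y|)^{-1/2}$; Schur's test then gives $\|\tilde T_G^j\|_{L^2\to L^2}\lesssim 2^{-j(\frac32+\delta)}2^{-j/4}$, short of the claimed $2^{-j/2}$. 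The paper avoids this by slicing: fix $(r_1,r_2)$, view the operator as an angular convolution, apply Plancherel and Van der Corput using $|\partial^2_{\theta}d_G|\gtrsim r_1r_2$ (the jump is harmless there since Van der Corput only needs the total variation of the amplitude and tolerates the endpoint truncation at $\theta_1-\theta_2=\pi$), and then integrate the resulting weight $(2^jr_1r_2)^{-1/2}$ in $r_2$.

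Second, for $p>4$ the outer dyadic pieces are where the real work lies, and the assertion that ``Sogge's theorems apply and yield the target bounds with a geometrically summable loss in $k$'' is not substantiated: on the piece $|u|\sim\mu=2^{k-j/2}$ the cutoff has derivatives of size $\mu^{-N}$, so Lemma~2.3.4/Corollary~2.2.3 of \cite{Sogge1} cannot be invoked with uniform constants, and no per-piece gain in $\mu$ is proved. The cheap substitutes do not close the gap: the Schur bound $\sim\mu$ sums only to $O(1)$, and interpolating the one-dimensional angular multiplier bound $\min(\mu,(2^jr_1r_2)^{-1/2})$ with a radial H\"ormander-type estimate (your ``execute the argument in the radial variables'' remark) yields at best a gain of order $2^{-j/p}$, not the required $2^{-2j/p}$; the full two-dimensional curvature must enter. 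This is precisely why the paper's Case~3 proceeds differently: it squares the operator in the angular variables (so the indicators merely truncate the $\theta_1$-domain to $[\pi-2\epsilon,\Theta]$), proves the kernel bound \eqref{est:keyker} by one integration by parts in $(r_1,\theta_1)$ using the determinant lower bound $\gtrsim r_1r_2^3|\theta_2-\theta_2'|$ of Lemma~\ref{lem:det}, interpolates the resulting $L^2$ gain $(1+2^jr_2^3)^{-1/2}$ with the trivial $L^\infty$ bound, and only then integrates in $r_2$ by H\"older — which is exactly where the hypothesis $p>4$ is used. To salvage your decomposition you would need to prove a per-piece bound with an explicit gain in $\mu$ (e.g.\ via a parabolic rescaling whose effect on the phase and on the Carleson--Sj\"olin condition is tracked), which is the nontrivial content your proposal currently leaves open.
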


\begin{proof}
To prove this proposition, we write the operator
\begin{equation*}
\begin{split}
\tilde{T}_{G}^j f(r_1,\theta_1) &=\int_0^\infty\int_0^{2\pi} \tilde{K}^j_G(r_1,r_2;\theta_1-\theta_2)\eta(\theta_1-\theta_2) f(r_2,\theta_2) \,d\theta_2 \,r_2 dr_2\\
&\triangleq\int_0^\infty[\tilde{T}_{G,r_1,r_2}^j f(r_2, \cdot)]( \theta_1) \,r_2 dr_2
\end{split}
\end{equation*}
where the operator is given by
\begin{equation}\label{Tr1r2}
[\tilde{T}_{G,r_1,r_2}^j g(\cdot)]( \theta_1)=\int_0^{2\pi} \tilde{K}^j_G(r_1,r_2;\theta_1-\theta_2) \eta(\theta_1-\theta_2) g(\theta_2) d\theta_2.
\end{equation}
We aim to estimate
\begin{equation}\label{q-q0}
\begin{split}
&\big\|\tilde{T}_{G}^j f\big\|_{L^p(\R^2)} \leq  \int_0^\infty \Big\| [\tilde{T}_{G,r_1,r_2}^j f(r_2, \cdot)]( \theta_1) \Big\|_{L^p(\R^2)} \,r_2 dr_2 .
\end{split}
\end{equation}

We first prove \eqref{est:TGj'2} by considering $p=2$. Thanks to the assumption on the support of $\tilde{K}^j_G$ in $\theta_1-\theta_2$ and $\theta_2\in[0, \epsilon)$,
we may treat the $\theta_j, (j=1,2)$ as variables in $\R$. So we can regard $\tilde{T}_{G_1,r_1,r_2}^j$ as a convolution kernel in $\theta$ and use Plancherel's theorem to obtain 
\begin{equation}\label{2-2}
\begin{split}
& \Big\| [\tilde{T}_{G,r_1,r_2}^j f(r_2, \cdot)]( \theta_1) \Big\|_{L^2_{\theta_1}([0,2\pi))}= \Big\| [\tilde{T}_{G,r_1,r_2}^j f(r_2, \cdot)]( \theta_1) \Big\|_{L^2_{\theta_1}(\R)}\\
&= \Big\| \widehat{\tilde{K}^j_G \eta}(\zeta) \hat{f}(\zeta) \Big\|_{L^2 (d\zeta)}\leq C 2^{-j(\frac32+\delta)} (2^jr_1r_2)^{-1/2} \|f\|_{L^2_{\theta}(\R)},
\end{split}
\end{equation}
if we could prove that the corresponding Fourier multiplier satisfies
\begin{align}\nonumber
  &\Big|\int_{\pi-2\epsilon}^{\pi} e^{i\zeta\theta+i2^jd_G(r_1,r_2; \theta)}
 b(2^j, d_G(r_1,r_2; \theta))\eta(\theta)
\;d\theta\Big|\\\label{equ:kernconfrou}
\lesssim& 
(2^jr_1r_2)^{-1/2}.
\end{align}
We remark here that we drop off the cutoff function $\mathbbm{1}_{[0,\pi]}(|\theta|) $ by using $\int_{\pi-2\epsilon}^{\pi}$.
To show \eqref{equ:kernconfrou}, we recall
 $$d_G(r_1,r_2; \theta)=|x-y|=\sqrt{r_1^2+r_2^2-2r_1r_2\cos(\theta)}.$$ 
Due to the compact support of $\eta$,  we note that
$\theta\to \pi$ as $\epsilon\to 0$. Hence the facts $|x-y|\sim 1$ implies $r_1+r_2\sim 1$.
Then,  as $\epsilon\to 0$,  we compute that
\begin{equation}\label{1-d}
\begin{split}
\partial_{\theta_1}d_G(r_1,r_2; \theta)&=\frac{r_1r_2\sin(\theta)}{\sqrt{r_1^2+r_2^2-2r_1r_2\cos(\theta)}}\\
&=\frac{r_1r_2}{r_1+r_2}\sin(\theta)+O((r_1r_2)^2(\theta-\pi)^3),
\end{split}
\end{equation}
and
\begin{equation}\label{2-d}
\begin{split}
&\partial^2_{\theta_1}d_G(r_1,r_2; \theta)=\frac{r_1r_2}{r_1+r_2}\cos(\theta)+O((r_1r_2)^2(\theta-\pi)^2).\end{split}
\end{equation}
Thus, from \eqref{2-d}, we see
$$|\partial_{\theta_1}^2d_G(r_1,r_2; \theta_1-\theta_2)|\geq c r_1r_2.$$
Hence, we use the Van der Corput lemma \ref{lem:VCL} to obtain \eqref{equ:kernconfrou}.
Thus, we have proved \eqref{2-2}. By using \eqref{q-q0}, \eqref{2-2} and the facts that $r_1+r_2\sim 1$ and $\theta_2\in[0, \epsilon)$, we show that
\begin{equation*}
\begin{split}
\big\|\tilde{T}_{G}^j f\big\|_{L^2(\R^2)} &\leq  C 2^{-j(\frac32+\delta)} 2^{-\frac j2} \int_0^1 r_2^{-\frac12}\|f(r_2,\theta_2)\|_{L^2_{\theta_2}(\R)} \,r_2 dr_2 \\
& \leq  C 2^{-j(\frac32+\delta)} 2^{-\frac j2} \|f\|_{L^{2}(\R^2)},
\end{split}
\end{equation*}
which is \eqref{est:TGj'2}.\vspace{0.2cm}

We next prove \eqref{est:TGj'4} by considering \eqref{q-q0} with $p>4$. Note that $p>4$, then $p_0:=\frac p2>2$. 
To estimate \eqref{q-q0}, we write that
\begin{equation*}
\begin{split}
&\Big\| [\tilde{T}_{G,r_1,r_2}^j f(r_2, \cdot)]( \theta_1) \Big\|^2_{L^p(\R^2)} 
=\Big\|  [\tilde{T}_{G,r_1,r_2}^j f(r_2, \cdot)]( \theta_1)  \overline{[\tilde{T}_{G,r_1,r_2}^j f(r_2, \cdot)]( \theta_1) } \Big\|_{L^{p_0}(\R^2)}.
\end{split}
\end{equation*}
Let $F(r_2, \theta_2; \theta'_2)=f(r_2, \theta_2) \overline{f(r_2, \theta'_2)}$, we define 
\begin{equation}\label{bfTr1r22}
\begin{split}
&\big({\bf T}_{G,r_1,r_2}^j F\big)(\theta_1)\\
&= \int_0^{2\pi} \int_0^{2\pi} {\bf K}_{G,r_1,r_2}^j (\theta_1,\theta_2,\theta'_2) F(r_2, \theta_2; \theta'_2) d\theta_2\,  d\theta'_2, 
\end{split}
\end{equation}
where the kernel 
\begin{equation}\label{bfTr1r2}
\begin{split}
&{\bf K}_{G,r_1,r_2}^j (\theta_1,\theta_2,\theta'_2)=\tilde{K}^j(r_1,r_2;\theta_1-\theta_2)  \overline{\tilde{K}^j(r_1,r_2;\theta_1-\theta'_2)} \eta(\theta_1-\theta_2) \eta(\theta_1-\theta'_2) \\
&=2^{-2j(\frac32+\delta)} b(2^j, |x-y|) b(2^j, |x-z|)e^{i2^j (|x-y|-|x-z|)}\\
&\qquad\times\mathbbm{1}_{[0,\pi]}(|\theta_1-\theta_2|)\eta(\theta_1-\theta_2) \mathbbm{1}_{[0,\pi]}(|\theta_1-\theta'_2|) \eta(\theta_1-\theta'_2)
\end{split}
\end{equation}
with $b$ satisfying \eqref{est:b} and $x=r_1(\cos\theta_1,\sin\theta_1), y=r_2(\cos\theta_2,\sin\theta_2),  z=r_2(\cos\theta'_2,\sin\theta'_2)$.\vspace{0.2cm}

For our purpose, we need the following lemma.

\begin{lemma}\label{lem:key1} Let ${\bf T}_{G,r_1,r_2}^j $ be the operator defined in \eqref{bfTr1r22}. Then
\begin{equation}\label{est:inf1}
\begin{split}
\Big\|\big({\bf T}_{G,r_1,r_2}^j F(r_2, \theta_2; \theta'_2)\big)(\theta_1)\Big\|_{L^\infty (r_1 dr_1 d\theta_1)}
\lesssim& 2^{-2j(\frac32+\delta)} \|F\|_{L^{1}( d \theta_2 d\theta'_2)},
\end{split}
\end{equation}
and
\begin{equation}\label{est:22}
\begin{split}
\Big\|\big({\bf T}_{G,r_1,r_2}^j F(r_2, \theta_2; \theta'_2)\big)(\theta_1)\Big\|_{L^2 (r_1 dr_1 d\theta_1)}\lesssim 2^{-2j(\frac32+\delta)}
(1+2^jr^3_2)^{-\frac12} \|F\|_{L^{2}( d \theta_2 d\theta'_2)}.
\end{split}
\end{equation}
\end{lemma}

\medskip

Proof of Lemma~\ref{lem:key1} will be given later.  Let us return to the proo of of \eqref{est:TGj'4}.
By interpolation, we obtain
\begin{equation*}
\begin{split}
&\Big\|\big({\bf T}_{G,r_1,r_2}^j F(r_2, \theta_2; \theta'_2)\big)(\theta_1)\Big\|_{L^{p_0} (r_1 dr_1 d\theta_1)}\\
&\lesssim 2^{-2j(\frac32+\delta)} (1+2^jr^3_2)^{-\frac1{p_0}}\|F\|_{L^{p_0'}( d \theta_2 d\theta'_2)}\lesssim 2^{-2j(\frac32+\delta)} (1+2^jr^3_2)^{-\frac1{p_0}}\|f\|^2_{L^{p_0'}( d \theta_2)}.
\end{split}
\end{equation*}
Therefore, from \eqref{q-q0} again, we prove
\begin{equation}
\begin{split}
\big\|\tilde{T}_{G}^j f\big\|_{L^p(\R^2)} &\lesssim 2^{-j(\frac32+\delta)}  \int_0^1 (1+2^jr^3_2)^{-\frac1{2p_0}}\|f\|_{L^{p_0'}( d \theta_2)} \,r_2 dr_2\\
&\lesssim 2^{-j(\frac32+\delta)}  \Big(\int_0^1 (1+2^jr^3_2)^{-\frac{p'}{p}}\,r_2 dr_2\Big)^{\frac1{p'}} \|f\|_{L^p_{r_2dr_2}L^{p_0'}_{d \theta_2}([0, \epsilon))} \\
&\lesssim 2^{-j(\frac32+\delta)}  2^{-\frac{2j}{3p'}}\|f\|_{L^p(\R^2)},
\end{split}
\end{equation}
where we use the H\"older inequality since $p_0'< 2<p$ in the last inequality. One observe that $2^{-\frac{2j}{3p'}}\leq 2^{-\frac{2j}{p}}$ for $p>4$ and $j\geq 0$.
Therefore, we finally prove \eqref{est:TGj'4}.

\end{proof}

\begin{proof}[{\bf The proof of Lemma \ref{lem:key1}:}] 
From \eqref{bfTr1r2}, we directly obtain the estimate about the kernel of ${\bf T}_{G,r_1,r_2}^j$ 
\begin{align}\label{equ:kjkernj'}
 &\Big|{\bf K}_{G,r_1,r_2 }^j (\theta_1,\theta_2,\theta'_2)\Big|\lesssim 
2^{-2j(\frac32+\delta)}.
\end{align}
Hence \eqref{est:inf1} follows. To prove \eqref{est:22}, we need more complicate argument. 
The main idea is to remove the jump function $\mathbbm{1}_{[0,\pi]}(|\theta_1-\theta_2|)$ and to use the Van-der-Corput lemma \ref{lem:VCL}.

We rewrite
\begin{equation}\label{bf T2}
\begin{split}
&\Big\|\big({\bf T}_{G,r_1,r_2}^j F(r_2, \theta_2; \theta'_2)\big)(\theta_1)\Big\|^2_{L^2 (r_1 dr_1 d\theta_1)}\\
&=\int_{0}^{2\pi} \int_{0}^{2\pi} \int_{0}^{2\pi} \int_{0}^{2\pi} {\bf \mathcal{K}}_{G_1, r_2}^j (\theta_2,\theta'_2; \tilde{\theta}_2, \tilde{\theta}'_2) F(r_2, \theta_2; \theta'_2) \overline{F(r_2, \tilde{\theta}_2; \tilde{\theta}'_2)} \, d \theta_2 d\theta'_2 \, d \tilde{\theta}_2 d \tilde{\theta}'_2,
\end{split}
\end{equation}
where
\begin{equation}
\begin{split}
{\bf \mathcal{K}}_{G, r_2}^j (\theta_2,\theta'_2; \tilde{\theta}_2, \tilde{\theta}'_2)=\int_0^\infty \int_0^{2\pi} {\bf K}_{G,r_1,r_2 }^j (\theta_1,\theta_2,\theta'_2) \overline{{\bf K}_{G,r_1,r_2 }^j (\theta_1, \tilde{\theta}_2, \tilde{\theta}'_2)}\, r_1 dr_1 d\theta_1,
\end{split}
\end{equation}
with ${\bf K}_{G,r_1,r_2 }^j $ being in  \eqref{bfTr1r2}.

Due to support of function $\eta$, we note $\theta_1-\theta_2\in [\pi-2\epsilon,\pi+2\epsilon]$, thus $\theta_1-\theta_2>0$. 
Similarly, one has $\theta_1-\theta'_2>0$, $\theta_1-\tilde{\theta}_2>0$, $\theta_1-\tilde{\theta}'_2>0$. Thus 
\begin{equation*}
\begin{rcases*}
\mathbbm{1}_{[0,\pi]}(|\theta_1-\theta_2|), \mathbbm{1}_{[0,\pi]}(|\theta_1-\theta_2'|)=1\\
\mathbbm{1}_{[0,\pi]}(|\theta_1-\tilde{\theta}_2|), \mathbbm{1}_{[0,\pi]}(|\theta_1-\tilde{\theta}_2'|)=1
\end{rcases*}
\implies \theta_1\leq \Theta:=\min\{\theta_2+\pi, \theta_2'+\pi, \tilde{\theta}_2+\pi, \tilde{\theta}'_2+\pi\}.
\end{equation*}
Therefore we can drop the characteristic jump function to obtain
\begin{equation}\label{bfKj}
\begin{split}
 &{\bf \mathcal{K}}_{G, r_2}^j (\theta_2,\theta'_2; \tilde{\theta}_2, \tilde{\theta}'_2)
 \\&=\int_0^\infty \int_{\pi-2\epsilon}^{\Theta} {\bf K}_{G,r_1,r_2}^j (\theta_1,\theta_2,\theta'_2) \overline{{\bf K}_{G,r_1,r_2}^j (\theta_1,\tilde{\theta}_2,\tilde{\theta}'_2)}\,  d\theta_1 r_1 dr_1\\
 &=2^{-4j(\frac32+\delta)}  \int_0^\infty \int_{\pi-2\epsilon}^{\Theta}  e^{i2^j(|x-y|-|x-z|)} e^{-i2^j(|x-\tilde{y}|-|x-\tilde{z}|)}\\
 &\qquad\times b(2^j, |x-y|) b(2^j, |x-z|)\eta(\theta_1-\theta_2)  \eta(\theta_1-\theta_2')\\
 &\qquad\times b(2^j, |x-\tilde{y}|) b(2^j, |x-\tilde{z}|)\eta(\theta_1-\tilde{\theta}_2)  \eta(\theta_1-\tilde{\theta}_2') \, d\theta_1 r_1 dr_1,
\end{split}
\end{equation}
where $$x=r_1(\cos\theta_1,\sin\theta_1), y=r_2(\cos\theta_2,\sin\theta_2),  z=r_2(\cos\theta'_2,\sin\theta'_2)$$ and
 $$ \tilde{y}=r_2(\cos\tilde{\theta}_2,\sin\tilde{\theta}_2),\,  \tilde{z}=r_2(\cos\tilde{\theta}'_2,\sin\tilde{\theta}'_2).$$
 
 \begin{lemma}\label{lem:keyker} Let ${\bf \mathcal{K}}_{G, r_2}^j (\theta_2,\theta'_2; \tilde{\theta}_2, \tilde{\theta}'_2)$ be given in \eqref{bfKj}. Then
 \begin{equation}\label{est:keyker}
\begin{split}
 \big|{\bf \mathcal{K}}_{G, r_2}^j (\theta_2,\theta'_2; \tilde{\theta}_2, \tilde{\theta}'_2)\big|\lesssim  
2^{-4j(\frac32+\delta)}  \Big(1+2^jr_2^3(|\theta_2-\tilde{\theta}_2|+|\theta'_2-\tilde{\theta}'_2|)\Big)^{-1}.
\end{split}
\end{equation}
 \end{lemma}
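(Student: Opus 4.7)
The plan is to combine a trivial estimate with an integration-by-parts argument in $(r_1,\theta_1)$, splitting according to the size of $\Lambda:=2^jr_2^3(|a|+|b|)$, where $a=\theta_2-\tilde\theta_2$ and $b=\theta_2'-\tilde\theta_2'$. Since the amplitudes $b(2^j,\cdot)$ are bounded uniformly and the joint support of the jump indicators, $\eta$-cutoffs and $\beta$-cutoffs has $r_1\,dr_1\,d\theta_1$-measure of order one (independently of $j$), one immediately obtains $|\mathcal{K}_{G,r_2}^j|\lesssim 2^{-4j(\frac32+\delta)}$, which already yields the claim when $\Lambda\le 1$.

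For $\Lambda\gg 1$ I would Taylor-expand the phase $\Phi=|x-y|-|x-z|-|x-\tilde y|+|x-\tilde z|$ around the diffractive configuration $\theta_1-\theta_k\approx\pi$. The identity $|x-y|^2-|x-\tilde y|^2=2x\cdot(\tilde y-y)$, valid because $|y|=|\tilde y|=r_2$, together with $\tilde y-y\approx -r_2\,a\,\hat\theta_2$ and $\hat x\cdot\hat\theta_2\approx -(\theta_1-\theta_2-\pi)$, gives
\[
|x-y|-|x-\tilde y|\approx \frac{r_1r_2}{r_1+r_2}\,a\,\Bigl(\theta_1-\pi-\tfrac{\theta_2+\tilde\theta_2}{2}\Bigr)
\]
and an analogous expression for $|x-z|-|x-\tilde z|$. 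Thus $\Phi$ is, to leading order, an affine function of $\theta_1$ with slope $\partial_{\theta_1}\Phi\approx (a-b)\,r_1r_2/(r_1+r_2)$, while $\partial_{r_1}\Phi\approx r_2^2/(r_1+r_2)^2$ times a linear combination of $a$, $b$ and their angular centers $\mu=\theta_2'-\theta_2$, $\tilde\mu=\tilde\theta_2'-\tilde\theta_2$.

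The next step is a case split on the size of $|a-b|$ relative to $|a|+|b|$. In the generic regime $|a-b|\gtrsim|a|+|b|$, iterated IBP in $\theta_1$ against the smooth amplitude produces a factor $\bigl(2^jr_1r_2(|a|+|b|)/(r_1+r_2)\bigr)^{-1}$; coupled with the $r_2$-scaling coming from the Jacobian $r_1\,dr_1$ restricted to the annular region where $|x-y|,|x-\tilde y|\sim 1$, this dominates $(2^jr_2^3(|a|+|b|))^{-1}$. When $a\approx b$, the $\theta_1$-derivative degenerates and I would IBP in $r_1$ instead, exploiting $\partial_{r_1}[r_1r_2/(r_1+r_2)]=r_2^2/(r_1+r_2)^2$; the extra factor of $r_2$ comes from the fact that near-degeneracy of $\partial_{\theta_1}\Phi$ forces $|\tilde\mu|+|\mu|$ to be comparable to the small transverse measure that survives in the integrand. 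Interpolating these two cases at the boundary $|a-b|\sim|a|+|b|$ produces the symmetric decay in $|a|+|b|$.

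The main obstacle will be the doubly-degenerate subregion where both $a\approx b$ and $\mu,\tilde\mu$ are small: there all first-order derivatives of $\Phi$ are of lower order than the target, and one must pass to the next Taylor term of the distance functions. Establishing a uniform lower bound on the determinant of the Hessian $\nabla^2\Phi$ in this regime (of order $r_2^4/(r_1+r_2)^4$) and then applying a two-dimensional stationary-phase bound with degenerate critical point is the key technical step, and it is what ultimately produces the cubic power $r_2^3$ once the factor $r_1\,dr_1$ with $r_1\lesssim 1$ is absorbed.
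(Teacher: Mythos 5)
Your preliminary steps are consistent with the paper: the trivial bound in the regime $2^jr_2^3(|a|+|b|)\lesssim 1$, the leading expansion of the phase near the diffractive configuration $\theta_1-\theta_k\approx\pi$ (your formula for $|x-y|-|x-\tilde y|$, hence $\partial_{\theta_1}\Phi\approx (a-b)\tfrac{r_1r_2}{r_1+r_2}$ and $\partial_{r_1}\Phi\approx\tfrac{r_2^2}{(r_1+r_2)^2}\cdot(\cdot)$), and the generic case $|a-b|\gtrsim |a|+|b|$, where integration by parts in $\theta_1$ followed by the $r_1\,dr_1$ integration gives $(2^jr_2(|a|+|b|))^{-1}\le (2^jr_2^3(|a|+|b|))^{-1}$. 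But your route then diverges from the paper's: the paper does not case-split on first derivatives. It linearizes $\nabla_{r_1,\theta_1}\Psi$ in the increments $(\theta_2-\tilde\theta_2,\theta_2'-\tilde\theta_2')$, reduces the lower bound to a $2\times2$ determinant of mixed second derivatives of $d_G$, expands that determinant in $\theta_2-\theta_2'$, and computes the resulting third-order determinant in closed form (Lemma \ref{lem:det}): it equals $r_1r_2^3(r_1\cos(\theta_1-\theta_2)-r_2)^3/|x-y|^6\sim r_1r_2^3$ near the diffractive angle; one integration by parts plus the splitting of the $r_1\,dr_1$ integral then produces the $r_2^3$ in \eqref{est:keyker}. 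In your scheme the cubic power is instead supposed to come from a degenerate two-dimensional stationary-phase bound, which is exactly the step you do not supply.

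That missing step is a genuine gap, and as formulated it cannot be carried out. With your notation $a=\theta_2-\tilde\theta_2$, $b=\theta_2'-\tilde\theta_2'$, $\mu=\theta_2'-\theta_2$, $\tilde\mu=\tilde\theta_2'-\tilde\theta_2$: on the subset $\theta_2=\theta_2'$, $\tilde\theta_2=\tilde\theta_2'$ (permitted by \eqref{assu:f}, with $a=b$ of size up to $\epsilon$) the phase $\Psi=(|x-y|-|x-z|)-(|x-\tilde y|-|x-\tilde z|)$ vanishes identically in $(r_1,\theta_1)$, and nearby $\Psi\approx\tfrac{r_1r_2}{r_1+r_2}\,a\mu$, so every derivative — in particular the Hessian in $(r_1,\theta_1)$ — is $O(|a||\mu|)$ and cannot be bounded below by $r_2^4/(r_1+r_2)^4$ uniformly; no stationary-phase estimate is available there. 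This is visible already in the paper's own computation: the determinant in \eqref{eq:det} carries the factor $|\theta_2-\theta_2'|$ and vanishes on the diagonal $\theta_2=\theta_2'$, so a uniform nondegeneracy claim of the kind you invoke is strictly stronger than what is true. For the same reason your intermediate case ($a\approx b$, $\mu$ not small) only yields, after integrating by parts in $r_1$, a factor of order $(2^jr_2^2|a||\mu|)^{-1}$, which dominates the target $(2^jr_2^3(|a|+|b|))^{-1}$ only when $|\mu|\gtrsim r_2$; the proposal does not say how to handle $|\mu|\ll r_2$. So as written, the argument proves the lemma only in the regime $|a-b|\gtrsim|a|+|b|$ (plus the trivial regime), and the mechanism proposed for the remaining, most degenerate configurations would fail; any complete argument has to confront precisely this degeneracy, e.g.\ by tracking the factor $|\theta_2-\theta_2'|$ as the paper's determinant expansion does, rather than by positing a uniformly nondegenerate Hessian.
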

 
 We postpone the proof of Lemma  \ref{lem:keyker} to the appendix section. 
Now we use Lemma  \ref{lem:keyker} to prove  \eqref{est:22}. 
 By using \eqref{est:keyker}, \eqref{bf T2} and Young's inequality, we obtain that
 \begin{equation}
\begin{split}
&\Big\|\big({\bf T}_{G,r_1,r_2}^j F(r_2, \theta_2; \theta'_2)\big)(\theta_1)\Big\|^2_{L^2 (r_1 dr_1 d\theta_1)} 
\lesssim 2^{-4j(\frac32+\delta)}\int_{[0,2\pi]^2}  F(r_2, \theta_2; \theta'_2)  \\
&\int_{[0,2\pi]^2} \Big(1+2^jr_2^3(|\theta_2-\tilde{\theta}_2|+|\theta'_2-\tilde{\theta}'_2|)\Big)^{-1}\overline{F(r_2, \tilde{\theta}_2; \tilde{\theta}'_2)} \, d \theta_2 d\theta'_2 \, d \tilde{\theta}_2 d \tilde{\theta}'_2\\
&\lesssim 2^{-4j(\frac32+\delta)} \| F(r_2, \theta_2; \theta'_2) \|^2_{L^2_{\theta_2, \theta'_2}} \int_{[0,2\pi]^2} \Big(1+2^jr_2^3(|\theta_2|+|\theta'_2|)\Big)^{-1}\, d \theta_2 d\theta'_2 \\
&\lesssim 2^{-4j(\frac32+\delta)} (1+2^j r_2^3)^{-1} \| F(r_2, \theta_2; \theta'_2) \|^2_{L^2_{\theta_2, \theta'_2}},
\end{split}
\end{equation}
where in the last inequality we use the estimate
\begin{equation}
\begin{split}
&\int_{[0,2\pi]^2} \Big(1+2^jr_2^3(|\theta_2|+|\theta'_2|)\Big)^{-1}\, d \theta_2 d\theta'_2\\
& \lesssim \int_{|\theta_2|+|\theta'_2|\leq 2^{-j}r_2^{-3}}  d \theta_2 d\theta'_2+ (2^jr_2^3)^{-1} \int_{ 2^{-j}r_2^{-3} \leq |\theta_2|+|\theta'_2|\leq 2\epsilon} (|\theta_2|+|\theta'_2|)^{-1} d \theta_2 d\theta'_2\\
& \lesssim (2^j r_2^3)^{-1}.
\end{split}
\end{equation}
Therefore, we have proved  \eqref{est:22} if we could prove Lemma  \ref{lem:keyker}, whose proof will be given in  the appendix section. 
\end{proof}

 \medskip

\section{The proof of Proposition \ref{prop:TDj} }\label{sec:TDj}

In this section, we prove Proposition \ref{prop:TDj} which is the second key proposition in the proof the main theorem. We prove this proposition by considering $j=0$ and 
$j\geq 1$. If $j=0$, from \eqref{equ:ream1}, \eqref{equ:ream2} and \eqref{equ:ream3}, similarly arguing as the proof of $T_{D_2}$, we have
$$|K_D^{\ell,0}(r_1,r_2;\theta_1-\theta_2)|\leq \beta_0(r_1+r_2)\in L^1 (\R).$$
Then we conclude, for $1\leq p\leq+\infty$, $$ \|T_{D}^{\ell,0}\|_{L^p(\R^2)\to L^p(\R^2)}\leq C,$$
which gives \eqref{est:TDj} when $j=0$.
\vspace{0.1cm}

From now on, we assume $j\geq1$.  For $j\geq1$ and $\ell=1,2,3$, we define that
$$\tilde{K}_D^{\ell,j}(r_1,r_2;\theta_1-\theta_2)=K_D^{\ell,0}(2^jr_1,2^jr_2;\theta_1-\theta_2)=\beta(r_1+r_2)
K^\ell_D(2^jr_1,2^jr_2;\theta_1-\theta_2)$$
and
\begin{equation}
\tilde{T}_{D}^{\ell,j} f(r_1,\theta_1)=\int_0^\infty\int_0^{2\pi} \tilde{K}_D^{\ell,j}(r_1,r_2;\theta_1-\theta_2) f(r_2,\theta_2) \,r_2dr_2\,d\theta_2.
\end{equation}
Then, using the scaling, we have
$$T_{D}^{\ell,j} f(r_1,\theta_1)=2^{2j}\tilde{T}_{D}^{\ell,j} \big(f(2^{j}r_2,\theta_2)\big)(2^{-j}r_1,\theta_1).$$
Therefore, the inequalities \eqref{est:TDj} are equivalent to
 \begin{equation}\label{est:TDjequ}
 \begin{split}
 \|\tilde{T}_{D}^{\ell,j}\|_{L^p\to L^p}&\leq C2^{-j(\frac32+\delta)} 2^{-\frac{2j}{p}}\|f\|_{L^{p}(\R^2)}, \quad p>4,\\
  \|\tilde{T}_{D}^{\ell,j}\|_{L^2\to L^2}& \leq C2^{-j(\frac32+\delta)} 2^{-\frac{j}{2}}\|f\|_{L^{2}(\R^2)}.
 \end{split}
\end{equation}
To this end, we need a fundamental proposition.
\begin{proposition}\label{lem:kerlplq}
Let $T_K$ be defined by
$$T_Kf(r_1,\theta_1):=\int_0^\infty \int_{0}^{2\pi} K(j; r_1,r_2,\theta_1,\theta_2)f(r_2,\theta_2)\;d\theta_2\;r_2\;dr_2,$$
and the kernel $K(r_1,r_2,\theta_1,\theta_2)$ satisfies
\begin{equation}\label{equ:kercond}
  |K(j; r_1,r_2,\theta_1,\theta_2)|\lesssim 2^{-j(\frac32+\delta)} \big(1+2^jr_1r_2\big)^{-\frac12}\beta(r_1+r_2), \, \beta\in\mathcal{C}_c^\infty\big(\big[\tfrac38,\tfrac43\big]\big).
\end{equation}
Then, for $p\geq4$, there holds
\begin{equation}\label{equ:tkpqest}
  \big\|T_Kf\big\|_{L^p(\R^2)}\lesssim 2^{-j(\frac32+\delta)} 2^{-\frac2pj}\|f\|_{L^p(\R^2)},
\end{equation}
and
\begin{equation}\label{equ:tkpqest'}
  \big\|T_Kf\big\|_{L^2(\R^2)}\lesssim 2^{-j(\frac32+\delta)} 2^{-\frac j2}\|f\|_{L^2(\R^2)}.
\end{equation}
\end{proposition}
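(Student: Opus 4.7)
The plan is to decompose the kernel $K$ according to the natural scale $2^{-j}$ and treat each piece by an appropriate method: a direct Hilbert-Schmidt computation for the $L^2$ bound, and pointwise H\"older estimates on three dyadic pieces for the $L^p$ bound. The hypothesis $|K|\lesssim 2^{-j(3/2+\delta)}(1+2^j r_1r_2)^{-1/2}\beta(r_1+r_2)$ has two qualitative regimes: on $\{r_1,r_2\geq 2^{-j}\}$ the factor $(1+2^jr_1r_2)^{-1/2}$ behaves like $2^{-j/2}(r_1r_2)^{-1/2}$, while elsewhere it is just absolutely bounded by $1$.

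First, for \eqref{equ:tkpqest'}, I would estimate $\|T_K\|_{L^2\to L^2}\leq\|K\|_{HS}$. Using the hypothesis,
\[
\|K\|_{HS}^2\lesssim 2^{-2j(3/2+\delta)}\iint (1+2^jr_1r_2)^{-1}\beta^2(r_1+r_2)\, r_1r_2\, dr_1 dr_2,
\]
and after the change of variables $u=2^j r_1 r_2$ (fixing $r_1$), the inner integral collapses to order $2^{-j}$; the remaining integral in $r_1$ against the compactly supported $\beta$-factor is $O(1)$, giving $\|K\|_{HS}\lesssim 2^{-j(3/2+\delta)}2^{-j/2}$, as required.

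Next, for \eqref{equ:tkpqest} with $p\geq 4$, I would decompose $K=K_1+K_2+K_3$ according to whether $r_1<2^{-j}$, or $r_1\geq 2^{-j}$ and $r_2<2^{-j}$, or both $r_1,r_2\geq 2^{-j}$. For $T_{K_1}$, whose output is supported on $\{r_1<2^{-j}\}$ of area $\sim 2^{-2j}$, a H\"older estimate in the output variable using $|K_1|\lesssim 2^{-j(3/2+\delta)}$ yields $\|T_{K_1}\|_{L^p\to L^p}\lesssim 2^{-j(3/2+\delta)}2^{-2j/p}$. For $T_{K_2}$, whose input integration is confined to $\{r_2<2^{-j}\}$, a dual H\"older estimate gives $\|T_{K_2}\|_{L^p\to L^p}\lesssim 2^{-j(3/2+\delta)}2^{-2j/p'}$, which is stronger than needed when $p\geq 4$. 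For the bulk piece $T_{K_3}$, using $|K_3|\lesssim 2^{-j(2+\delta)}(r_1r_2)^{-1/2}$ together with H\"older on the $r_2$-integration yields $|T_{K_3}f(r_1,\theta_1)|\lesssim 2^{-j(2+\delta)}r_1^{-1/2}\|f\|_p$. The radial integral $\int_{2^{-j}}^{4/3}r_1^{1-p/2}\,dr_1\sim 2^{j(p-4)/2}$ for $p>4$ then gives $\|T_{K_3}\|_{L^p\to L^p}\lesssim 2^{-j(3/2+\delta)}2^{-2j/p}$. Summing the three contributions completes the $L^p$ bound.

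The main obstacle is the endpoint $p=4$, where the radial integral becomes $\int_{2^{-j}}^{4/3}r_1^{-1}\,dr_1\sim j$, introducing a spurious factor of $j^{1/4}$ in the bound for $T_{K_3}$. This is handled either by absorbing $j^{1/4}\lesssim 2^{\epsilon j}$ for any $\epsilon>0$ (compatible with the $2^{\epsilon' j}$ loss that appears in Proposition~\ref{prop:TDj} for $4\leq p<6$), or by real-interpolating the $L^2$ bound above with the $L^{p_0}$ bound for some $p_0>4$ arbitrarily close to $4$. The technical heart of the argument is ensuring, via the change of variables $u=2^jr_1r_2$, that the scaling of the kernel is carefully tracked across the three dyadic regimes.
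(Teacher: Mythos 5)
Your argument is correct where it matters, but it is organized differently from the paper's. The paper proves both bounds in one stroke, without any dyadic splitting of $(r_1,r_2)$: for the $L^p$ bound it applies Minkowski's inequality, rescales the inner norm $\big\|(1+2^j|x|\,|y|)^{-1/2}\big\|_{L^p_x}$ (picking up the factor $2^{-2j/p}|y|^{-2/p}$), and then uses H\"older in $y$, the condition $p\geq 4$ entering only to guarantee $|y|^{-2/p}\in L^{p'}(|y|\leq 2)$; for the $L^2$ bound it uses the pointwise factorization $(1+2^jr_1r_2)^{-1/2}\leq 2^{-j/2}r_1^{-1/2}r_2^{-1/2}$ together with the local square-integrability of $|x|^{-1/2}$ in $\R^2$. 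Your trichotomy $r_1<2^{-j}$, $r_2<2^{-j}$, both $\geq 2^{-j}$, with H\"older on each piece, and your Hilbert--Schmidt computation for $L^2$, reach the same estimates by equally elementary means; the paper's version is shorter, yours makes the role of each regime more transparent.

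The one substantive difference is the endpoint $p=4$, which you treat honestly: your bulk term really does produce $\int_{2^{-j}}^{1} r_1^{1-p/2}\,dr_1\sim j$ there, and neither of your repairs removes the loss entirely (interpolating $L^2$ with $L^{p_0}$, $p_0>4$, also only yields the bound up to $2^{\epsilon j}$, since the interpolated gain is strictly smaller than $2^{-j/2}$ for any fixed $p_0>4$). In fact this loss is not an artifact of your method: a kernel saturating the size hypothesis on $\{2^{-j}\leq r_1\ll 1,\ r_2\sim 1\}$ acts essentially as a rank-one operator $f\mapsto |x|^{-1/2}\mathbbm{1}_{\{2^{-j}\leq|x|\leq 1\}}\int_{|y|\sim1}|f|$, whose $L^4\to L^4$ norm carries an extra $j^{1/4}$; the paper's own computation at $p=4$ silently discards the corresponding $(\log(2+2^j|y|))^{1/4}$ factor in $\big\|(1+2^j|x|\,|y|)^{-1/2}\big\|_{L^4_x(|x|\leq2)}$. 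Since the proposition is only invoked for $p>4$ (and the downstream estimate \eqref{est:TDj} tolerates a $2^{j\epsilon'}$ loss for $4\leq p<6$), your $\epsilon$-loss version at $p=4$ is fully adequate for the application, and for $p>4$ and $p=2$ your proof establishes exactly the stated bounds.
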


\begin{proof} For simple, in the proof, we identify $f(r_2,\theta_2)$ with $f(y)=f(r_2\cos\theta_2,r_2\sin\theta_2)$.
  By \eqref{equ:kercond} and Minkowski's inequality, we obtain
  \begin{align*}
    \big\|T_Kf\big\|_{L^p(\R^2)}\lesssim & 2^{-j(\frac32+\delta)} \Big\|\int_0^\infty \int_{0}^{2\pi} \big(1+2^jr_1r_2\big)^{-\frac12}\beta(r_1+r_2) |f(r_2,\theta_2)|
    \;d\theta_2\;r_2\;dr_2\Big\|_{L^p(\R^2)}\\
    \lesssim& 2^{-j(\frac32+\delta)} \int_{|y|\leq 1}\big\|(1+2^j|x|\cdot|y|)^{-\frac12}\big\|_{L^p_x(\R^2)}|f(y)|\;dy\\
     \lesssim& 2^{-j(\frac32+\delta)} 2^{-\frac2pj}\int_{|y|\leq 2}\big\|(1+|x|)^{-\frac12}\big\|_{L^p_x(|x|\leq 2)}|y|^{-\frac2p}|f(y)|\;dy\\
     \lesssim&2^{-j(\frac32+\delta)} 2^{-\frac2pj}\|f\|_{L^p}\big\||y|^{-\frac2p}\big\|_{L^{p'}(|y|\leq 1)}
     \lesssim 2^{-j(\frac32+\delta)} 2^{-\frac2pj}\|f\|_{L^p},
  \end{align*}
  where we need the assumption $p\geq4$ to guarantee that $|y|^{-\frac2p}\in L^{p'}(|y|\leq 2).$ For $p=2$, since $|x|^{-\frac12}\in L^2(|x|\leq 2)$, it is easy to show that
  \begin{align*}
    \big\|T_Kf\big\|_{L^2(\R^2)}\lesssim & 2^{-j(\frac32+\delta)} 2^{-\frac j2}\Big\|r_1^{-\frac12}\int_0^2 \int_{0}^{2\pi} r_2^{-\frac12}\beta(r_1+r_2) |f(r_2,\theta_2)|
    \;d\theta_2\;r_2\;dr_2\Big\|_{L^2(\R^2)}\\
    \lesssim& 2^{-j(\frac32+\delta)} 2^{-\frac j2} \big\||x|^{-\frac12}\big\|_{L^2_x(|x|\leq 2)}\int_{|y|\leq 2}|y|^{-\frac12} |f(y)|\;dy\\
     \lesssim &2^{-j(\frac32+\delta)} 2^{-\frac j2}\|f\|_{L^2(\R^2)}.
  \end{align*}

\end{proof}

Now, we show the kernels $\tilde{K}_D^{\ell,j}(r_1,r_2;\theta_1-\theta_2)$ satisfy \eqref{equ:kercond}. To do this, for fixed $r_1, r_2$, $\theta_1,\theta_2$ and $j\geq1$, we define
\begin{equation}
\phi(r_1,r_2; s)=|{\bf n}_s|=\sqrt{r_1^2+r_2^2+2r_1r_2\cosh s},
\end{equation}
and let $\alpha\in(-1,1)\setminus\{0\}$ and
\begin{equation}
\begin{split}
\psi_1(r_1,r_2,\theta_1,\theta_2; s)&=(2^{-j}+|{\bf n}_s|)^{-\frac32-\delta}  a(2^j|{\bf n}_s|)\, e^{-|\alpha|s},\\
\psi_2(r_1,r_2,\theta_1,\theta_2; s)&=(2^{-j}+|{\bf n}_s|)^{-\frac32-\delta}  a(2^j|{\bf n}_s|)\, \frac{(e^{-s}-\cos(\theta_1-\theta_2+\pi))\sinh(\alpha s)}{\cosh(s)-\cos(\theta_1-\theta_2+\pi)},\\
\psi_3(r_1,r_2,\theta_1,\theta_2; s)&=(2^{-j}+|{\bf n}_s|)^{-\frac32-\delta}  a(2^j|{\bf n}_s|)\, \frac{\sin(\theta_1-\theta_2+\pi)\cosh(\alpha s)}{\cosh(s)-\cos(\theta_1-\theta_2+\pi)}.
\end{split}
\end{equation}
Then by the definition, we have
\begin{equation}\label{KDl}
\begin{split}
\tilde{K}_D^{\ell,j}(r_1,r_2;\theta_1-\theta_2)&=2^{-j(\frac32+\delta)} \beta(r_1+r_2) \int_0^\infty e^{i2^j\phi(s)} \psi_\ell(s)\;ds,\quad \ell=1,2,3.
\end{split}
\end{equation}

\begin{lemma}\label{lem:ker-est12} For  $j\geq1$, then there holds
\begin{equation}\label{kerD-est}
\begin{split}
|\tilde{K}_{D}^{1,j}(r_1,r_2;\theta_1-\theta_2)|+|\tilde{K}_{D}^{2,j}(r_1,r_2;\theta_1-\theta_2)|\lesssim 2^{-j(\frac32+\delta)} \big(1+2^jr_1r_2\big)^{-\frac12}.
\end{split}
\end{equation}
\end{lemma}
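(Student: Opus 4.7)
Writing $\tilde{K}_D^{\ell,j}=2^{-j(3/2+\delta)}\beta(r_1+r_2)\,I_\ell$ with
\[
I_\ell:=\int_0^\infty e^{i2^j\phi(s)}\psi_\ell(s)\,ds,\qquad \phi(s):=|{\bf n}_s|=\sqrt{(r_1+r_2)^2+2r_1r_2(\cosh s-1)},
\]
and noting that $\beta(r_1+r_2)$ forces $r_1+r_2\sim 1$, the lemma reduces to the uniform oscillatory-integral bound $|I_\ell|\lesssim (1+2^jr_1r_2)^{-1/2}$ for $\ell=1,2$. My plan is to interpolate between the trivial $L^1$ bound $|I_\ell|\lesssim 1$, which already handles the regime $2^jr_1r_2\leq 1$, and an oscillatory bound $|I_\ell|\lesssim (2^jr_1r_2)^{-1/2}$ in the complementary range. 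The latter reflects exactly the gain one expects from the non-degenerate stationary point of $\phi$ at $s=0$, since $\phi''(0)=r_1r_2/(r_1+r_2)\sim r_1r_2$.

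\textbf{Phase and amplitude analysis.} Differentiation of $\phi^2$ gives $\phi'(s)=r_1r_2\sinh s/\phi(s)$ and
\[
\phi''(s)=\frac{r_1r_2\bigl[(r_1^2+r_2^2)\cosh s+r_1r_2(\cosh^2 s+1)\bigr]}{\phi(s)^3}>0,
\]
so $\phi'$ is monotone increasing with $\phi'(0)=0$. Using $r_1+r_2\sim 1$ on the support of $\beta$, one finds $\phi(s)\sim 1$ and $|\phi''(s)|\gtrsim r_1r_2$ on $[0,1]$, while monotonicity yields $\phi'(s)\geq \phi'(1)\sim r_1r_2$ for $s\geq 1$. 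For the amplitudes, \eqref{equ:ream1} and \eqref{equ:ream2} already supply $\|\psi_\ell\|_{L^1(0,\infty)}\lesssim 1$ (uniformly in the parameters). The sup-norm and $L^1$-derivative bounds $\|\psi_\ell\|_{L^\infty}+\|\partial_s\psi_\ell\|_{L^1}\lesssim 1$ are immediate for $\ell=1$ from the exponential factor $e^{-|\alpha|s}$, the bound $|{\bf n}_s|\geq r_1+r_2\sim 1$, and $|a^{(N)}(r)|\leq C_N r^{-N}$ from \eqref{est:aj}, which after the substitution $r=|{\bf n}_s|$, $dr=\phi'(s)\,ds$, reduces derivative integrals to $\int r^{-5/2-\delta}\,dr<\infty$.

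\textbf{Van der Corput and conclusion.} Assume $2^jr_1r_2>1$ and split $I_\ell=\int_0^1+\int_1^\infty$. After rescaling the phase by $(r_1r_2)^{-1}$ so that Lemma \ref{lem:VCL} applies with $|\phi''|\geq 1$ and effective large parameter $2^jr_1r_2$, the $k=2$ version gives
\[
\Bigl|\int_0^1 e^{i2^j\phi(s)}\psi_\ell(s)\,ds\Bigr|\lesssim (2^jr_1r_2)^{-1/2}\bigl(|\psi_\ell(1)|+\|\partial_s\psi_\ell\|_{L^1[0,1]}\bigr)\lesssim (2^jr_1r_2)^{-1/2}.
\]
On $[1,\infty)$, $\phi'$ is monotone with $|\phi'|\gtrsim r_1r_2$, so one integration by parts (using $\psi_\ell(\infty)=0$) yields a bound $\lesssim (2^jr_1r_2)^{-1}$, which is even stronger than $(2^jr_1r_2)^{-1/2}$ in the present regime; the IBP remainder is controlled by the pointwise bound $|\phi''/(\phi')^2|\lesssim 1/(r_1r_2\sinh s)$ together with the decay of $\psi_\ell$. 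Combining with the trivial $L^1$ bound when $2^jr_1r_2\leq 1$ gives $|I_\ell|\lesssim (1+2^jr_1r_2)^{-1/2}$, and the lemma follows upon reinstating the prefactor $2^{-j(3/2+\delta)}\beta(r_1+r_2)$.

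\textbf{Main obstacle.} The delicate step is the uniform amplitude bound for $\psi_2$ in the diffraction regime $\Theta:=\theta_1-\theta_2+\pi\to 0$, i.e.\ $\theta_1-\theta_2\to\pm\pi$. The denominator $\cosh s-\cos\Theta=2\sinh^2(s/2)+(1-\cos\Theta)$ has a double zero at $s=0$ when $\Theta\to 0$, but rewriting $(e^{-s}-\cos\Theta)=(1-\cos\Theta)-2e^{-s/2}\sinh(s/2)$ and exploiting the zero of $\sinh(\alpha s)$ at $s=0$ reveals an exact cancellation. A careful regime-splitting argument on $s\lesssim\sqrt{1-\cos\Theta}$, on the intermediate range, and on $s\gtrsim 1$ then produces the required $L^\infty$ and $L^1$-derivative bounds uniformly in $\Theta$; this cancellation, tied to the Aharonov-Bohm diffraction emphasized throughout the paper, is the technical heart of the lemma.
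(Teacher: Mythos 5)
Your proposal follows essentially the same skeleton as the paper's proof: dispose of the regime $2^jr_1r_2\lesssim 1$ via \eqref{equ:ream1}--\eqref{equ:ream2}, use $\phi'(0)=0$, $\phi''\gtrsim r_1r_2$ on $[0,1]$ and monotone $\phi'\gtrsim r_1r_2$ on $[1,\infty)$ (your closed form for $\phi''$ is correct), and apply Van der Corput (Lemma \ref{lem:VCL}) with $k=2$ near $s=0$ and $k=1$/integration by parts at infinity; the treatment of $\psi_1$ and of $\psi_2$ on $[1,\infty)$ matches the paper. Where you genuinely diverge is at the crux, the uniform-in-$b$ control of $\psi_2$ on $[0,1]$ as $b=\sqrt2\sin\big(\tfrac{\theta_1-\theta_2+\pi}{2}\big)\to 0$: you propose to prove $\|\partial_s\psi_2\|_{L^1(0,1)}\lesssim 1$ directly by exhibiting a cancellation, whereas the paper declares this not directly verifiable, subtracts the explicit quadratic model $\frac{(b^2-s)\alpha s}{s^2/2+b^2}$ (the difference bound \eqref{d-v1} being imported from \cite{FZZ1}), and then checks the model by splitting it into a constant piece, $\frac{b^2}{s^2/2+b^2}$ and $\frac{sb^2}{s^2/2+b^2}$, whose total variations are bounded after the rescaling $s\to bs$.

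Your route can be made to work -- writing the fraction as $N/D$ with $N=(e^{-s}-\cos\Theta)\sinh(\alpha s)$, $D=\cosh s-\cos\Theta$, the two terms $N'/D$ and $ND'/D^2$ each have $L^1(0,1)$ norm of order $\log(1/b)$, but in the combination $\frac{N'D-ND'}{D^2}$ the numerator is $O(b^4+b^2s+b^2s^2)$ near $s=0$, which integrates against $D^{-2}$ uniformly in $b$ -- but in your write-up this decisive estimate is asserted ("a careful regime-splitting argument then produces the required bounds") rather than carried out, and it is precisely the step the paper outsources to the model comparison. So as it stands the technical heart of the lemma is a sketch: to be complete you must display the cancellation in $N'D-ND'$ (or reproduce the paper's subtraction argument), since the naive term-by-term bound fails uniformly in $b$. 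One further small imprecision: your pointwise bound $|\phi''/(\phi')^2|\lesssim (r_1r_2\sinh s)^{-1}$ on $[1,\infty)$ is only valid when $r_1r_2\cosh s\lesssim 1$; in general one has $\phi''/(\phi')^2\le \phi\cosh s/(r_1r_2\sinh^2 s)$, which still integrates to $\lesssim (r_1r_2)^{-1/2}+(r_1r_2)^{-1}$ and hence does not affect the conclusion (alternatively, use Van der Corput with $k=1$ there, as the paper does).
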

The $\tilde{K}_D^{3,j}$ is more complicated since it is more singular at $s=0$ than $\tilde{K}_D^{\ell,j}$ with $\ell=1, 2$.
To treat $\tilde{K}_D^{3,j}$, we define
\begin{equation}\label{psi-m}
\begin{split}
\psi_{3,m}(r_1,r_2,\theta_1,\theta_2; s)&=(r_1+r_2)^{-\frac32-\delta} a(2^j(r_1+r_2))\, \frac{\sin(\theta_1-\theta_2+\pi)}{\frac{s^2}2+2\sin^2\big(\tfrac{\theta_1-\theta_2+\pi}2\big)},
\end{split}
\end{equation}
and $\psi_{3,e}=\psi_{3}-\psi_{3,m}$.
We further define the kernels
\begin{equation}\label{KDe3}
\begin{split}
\tilde{K}_{D,m}^{3,j}(r_1,r_2;\theta_1-\theta_2)&=2^{-j(\frac32+\delta)}\beta(r_1+r_2) \int_0^\infty e^{i2^j\phi(s)} \psi_{3, m}(s)\;ds,\\
\tilde{K}_{D,e}^{3,j}(r_1,r_2;\theta_1-\theta_2)&=2^{-j(\frac32+\delta)}\beta(r_1+r_2) \int_0^\infty e^{i2^j\phi(s)} \psi_{3, e}(s)\;ds.
\end{split}
\end{equation}
Similarly as Lemma \ref{lem:ker-est12}, we have

\begin{lemma}\label{lem:ker-est3} Let $\tilde{K}_{D,e}^{3,j}$ be given in \eqref{KDe3} and define
\begin{equation}\label{def-H}
H(r_1,r_2;\theta_1-\theta_2)=2^{-j(\frac32+\delta)}\beta(r_1+r_2) \int_0^\infty e^{i 2^j\frac{r_1r_2}{r_1+r_2} s^2} \psi_{3, m}( s)\;ds,
\end{equation}
 then
\begin{equation}\label{kerDe3-est}
\begin{split}
|\tilde{K}_{D,e}^{3,j}(r_1,r_2;\theta_1-\theta_2)|\lesssim 2^{-j(\frac32+\delta)}\big(1+2^jr_1r_2\big)^{-\frac12},
\end{split}
\end{equation}
and
\begin{equation}\label{kerDm3-est}
\begin{split}
\big|e^{-i2^j(r_1+r_2)}\tilde{K}_{D,m}^{3,j}(r_1,r_2;\theta_1-\theta_2)-& H(r_1,r_2;\theta_1-\theta_2)\big|
\\&\lesssim 2^{-j(\frac32+\delta)}\big(1+2^jr_1r_2\big)^{-\frac12}.
\end{split}
\end{equation}

\end{lemma}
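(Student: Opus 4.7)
The plan is to treat both \eqref{kerDe3-est} and \eqref{kerDm3-est} by refining the Van der Corput argument underlying Lemma~\ref{lem:ker-est12}, using that the phase $\phi(s)=|{\bf n}_s|$ satisfies $\phi(0)=r_1+r_2$, $\phi'(0)=0$, $\phi''(s)\gtrsim r_1r_2$ on $[0,1]$, and $\phi'(s)\gtrsim\sqrt{r_1r_2}\,e^{s/2}$ for $s\gtrsim 1$, together with the cutoff $\beta(r_1+r_2)$ which pins $r_1+r_2\sim 1$. A further useful observation is that the amplitude $A(s):=(2^{-j}+\phi(s))^{-3/2-\delta}a(2^j\phi(s))$, thanks to $\phi'(0)=0$ and the decay bounds \eqref{est:aj} on $a$, is smooth in $s$ with $A(s)-A(0)=O(r_1r_2 s^2)$ for $s\in[0,1]$.

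For \eqref{kerDe3-est}, the first step is to decompose
\[
\psi_{3,e}=A(s)\bigl[B(s)-\tilde B(s)\bigr]+\bigl[A(s)-A(0)\bigr]\tilde B(s),
\]
where $B(s)=\tfrac{\sin(\theta_1-\theta_2+\pi)\cosh(\alpha s)}{\cosh s-\cos(\theta_1-\theta_2+\pi)}$ is the singular factor of $\psi_3$ and $\tilde B(s)=\tfrac{\sin(\theta_1-\theta_2+\pi)}{s^2/2+2\sin^2(\tfrac{\theta_1-\theta_2+\pi}{2})}$ is the corresponding factor of $\psi_{3,m}$. Taylor expansions of $\cosh s$, $\cos(\theta_1-\theta_2+\pi)$ and $\cosh(\alpha s)$ about $s=0$, combined with the $\sqrt{c}$ vanishing of $\sin(\theta_1-\theta_2+\pi)$ (with $c=2\sin^2(\tfrac{\theta_1-\theta_2+\pi}{2})$), would show that the first bracket is $O(1)$ and the second is $O(r_1r_2\,s^2/(s^2+c))$; these give $|\psi_{3,e}(s)|\lesssim 1$ and $|\psi_{3,e}'(s)|\lesssim 1$ uniformly on $[0,1]$. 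For $s\geq 1$, the exponential decay of $A(s)$ and the $1/s^2$ decay of $\tilde B(s)$ yield $\|\psi_{3,e}\|_{L^1(ds)}\lesssim 1$. Van der Corput (Lemma~\ref{lem:VCL} with $k=2$) on $[0,1]$ using $|\phi''|\gtrsim r_1r_2$, together with integration by parts against $\phi'\gtrsim\sqrt{r_1r_2}\,e^{s/2}$ on $[1,\infty)$, then gives a bound of $\min(1,(2^jr_1r_2)^{-1/2})\lesssim (1+2^jr_1r_2)^{-1/2}$, proving \eqref{kerDe3-est}.

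For \eqref{kerDm3-est}, I would factor out $e^{i2^j(r_1+r_2)}$ so the left-hand side becomes $2^{-j(3/2+\delta)}\beta(r_1+r_2)$ times
\[
\int_0^\infty\Bigl(e^{i2^j(\phi(s)-(r_1+r_2))}-e^{i2^j\frac{r_1r_2}{r_1+r_2}s^2}\Bigr)\psi_{3,m}(s)\,ds.
\]
The Taylor expansion $\phi(s)-(r_1+r_2)=\tfrac{r_1r_2}{2(r_1+r_2)}s^2+R(s)$ with $|R(s)|\lesssim r_1r_2\,s^4$ on $[0,1]$ allows me to combine the quartic remainder and any discrepancy in the quadratic coefficient into a single error of order $|2^j r_1r_2 s^2|$ on small $s$; pairing with $|\psi_{3,m}(s)|\lesssim \sqrt{c}/(s^2+c)$, in which the crucial $\sqrt{c}$ comes from the $\sin(\theta_1-\theta_2+\pi)$ numerator, controls the small-$s$ contribution. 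For the $[1,\infty)$ tail, I would bound each of the two oscillatory integrals separately by Van der Corput or integration by parts applied against the integrable tail of $\psi_{3,m}$, using $|\phi'|\gtrsim\sqrt{r_1r_2}\,e^{s/2}$ for the first and the trivial lower bound $|\tfrac{d}{ds}(\tfrac{r_1r_2}{r_1+r_2}s^2)|\gtrsim r_1r_2$ for the second. Balancing the regimes yields the target $(1+2^jr_1r_2)^{-1/2}$.

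The main obstacle, in my estimation, is the coupling between the $1/(s^2+c)$ singularity of $\psi_{3,m}$ and the stationary point of the phase at $s=0$ when $c\to 0$: a naive estimate produces a $c^{-1/2}$ (or $c^{-3/2}$ from differentiation) blow-up in the Van der Corput derivative term, and this must be tracked and shown to be exactly cancelled by the $\sqrt{c}$ vanishing of $\sin(\theta_1-\theta_2+\pi)$ in every error term in order to obtain uniformity in $\theta_1-\theta_2\in[0,2\pi)$. A secondary subtlety is verifying that the decomposition $\psi_{3,e}=\psi_3-\psi_{3,m}$ really produces a $W^{1,1}$ function uniformly in the parameters, so that the derivative piece in Van der Corput is controlled by an absolute constant; this again reduces to careful Taylor bookkeeping in both the amplitude $A(s)$ and the fractional factor $B(s)-\tilde B(s)$.
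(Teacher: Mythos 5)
Your argument for \eqref{kerDe3-est} is essentially the paper's: the same splitting of $\psi_{3,e}$ into an amplitude--difference piece and a singular-factor--difference piece, uniform $W^{1,1}$-type bounds on $[0,1]$, decay on $[1,\infty)$, and Van der Corput; that part is sound.

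The treatment of \eqref{kerDm3-est}, however, has a genuine gap. First, the quadratic-coefficient issue cannot be ``combined with the quartic remainder'': the true expansion is $\phi(s)-(r_1+r_2)=\frac{r_1r_2}{2(r_1+r_2)}s^2+O(r_1r_2s^4)$, while the model phase in \eqref{def-H} is $\frac{r_1r_2}{r_1+r_2}s^2$; their difference is of the same order as the phase itself, so on $s\gtrsim (2^jr_1r_2)^{-1/2}$ the bound $|e^{ia}-e^{ib}|\le|a-b|$ yields nothing (the phases must either be matched exactly, as the paper does, or the constant in $H$ adjusted to the Hessian). Second, and more fundamentally, your fallback of estimating the two oscillatory integrals separately on $[s_0,\infty)$ cannot be made uniform in $b=\sqrt2\sin\big(\frac{\theta_1-\theta_2+\pi}2\big)$. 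Writing $\Lambda=2^jr_1r_2$ and recalling $|\psi_{3,m}|\sim\frac{b}{s^2+b^2}$ with $\|\psi_{3,m}\|_{\infty}+\|\psi_{3,m}'\|_{L^1}\sim b^{-1}$, take for instance $b\sim\Lambda^{-1/4}$: each individual integral is genuinely of size $\sim(\Lambda b^2)^{-1/2}\gg\Lambda^{-1/2}$ (this is exactly what \eqref{est:I} and Proposition \ref{prop:ker-est3H} quantify), so separate estimates on any region containing the bump $s\sim b$ lose a factor $b^{-1}$; while on the initial region the crude difference bound costs $\Lambda\int_0^{s_0}s^2\frac{b}{s^2+b^2}\,ds\sim\Lambda b^2$ already at $s_0\sim b$, which exceeds $\Lambda^{-1/2}$ once $b\gtrsim\Lambda^{-3/4}$. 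No choice of the splitting point $s_0$ closes this intermediate regime: one must exploit cancellation between the two kernels across the whole bump $s\sim b$, not only near $s=0$. That is precisely what the paper's proof supplies: a Morse-lemma change of variables $\tilde s=\tilde s(s)$ with $\tilde s-s=O(s^3)$ converts the exact phase into the exact model phase, so the difference becomes a single oscillatory integral with amplitude $\psi_{3,m}(s)-\psi_{3,m}(\tilde s)\,\frac{d\tilde s}{ds}$; the cubic closeness of $\tilde s$ to $s$ (and the vanishing of the first derivative of the Jacobian at $s=0$) compensates the $b^{-1}$ singularity of $\psi_{3,m}'$, giving bounds uniform in $b$, after which one application of Lemma \ref{lem:VCL} finishes. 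You correctly identify the $b\to0$ coupling as the main obstacle, but the proposal does not provide this (or any equivalent) mechanism, so as written the proof of \eqref{kerDm3-est} does not go through.
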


\begin{proposition}\label{prop:ker-est3H} Let $b=\sqrt{2}\sin\big(\frac{\theta_1-\theta_2+\pi}2\big)$ and $H(r_1,r_2;\theta_1-\theta_2)$ be in \eqref{def-H} and let $\chi\in\mathcal{C}^\infty_c([-\epsilon,\epsilon])$ be an even bump function supported in a small interval $[-\epsilon,\epsilon]$ where $0<\epsilon\ll1$.
Then
\begin{equation}\label{kerDH3-est}
\begin{split}
|(1-\chi(b))H(r_1,r_2;\theta_1-\theta_2)|\lesssim 2^{-j(\frac32+\delta)}\big(1+2^jr_1r_2\big)^{-\frac12}.
\end{split}
\end{equation}
Furthermore, it holds that
\begin{equation}\label{equ:Hpqest'}
\begin{split}
  \Big\|\int_0^\infty \int_{0}^{2\pi} \chi(b)H(r_1,r_2;\theta_1-\theta_2)& f(r_2,\theta_2)\;d\theta_2\;r_2\;dr_2\Big\|_{L^2(\R^2)}
  \\&\lesssim 2^{-j(\frac32+\delta)}2^{-\frac j2}\|f\|_{L^2(\R^2)},
    \end{split}
\end{equation}
and for $p\geq 6$,
\begin{equation}\label{equ:Hpqest}
\begin{split}
  \Big\|\int_0^\infty \int_{0}^{2\pi} \chi(b)H(r_1,r_2;\theta_1-\theta_2) &f(r_2,\theta_2)\;d\theta_2\;r_2\;dr_2\Big\|_{L^p(\R^2)}
  \\&\lesssim 2^{-j(\frac32+\delta)}2^{-j\frac 2p}\|f\|_{L^p(\R^2)},
  \end{split}
\end{equation}
while for $4\leq p<6$ and $0<\epsilon'\ll 1$
\begin{equation}\label{equ:Hpqest''}
\begin{split}
  \Big\|\int_0^\infty \int_{0}^{2\pi} \chi(b)H(r_1,r_2;\theta_1-\theta_2) &f(r_2,\theta_2)\;d\theta_2\;r_2\;dr_2\Big\|_{L^p(\R^2)}
  \\&\lesssim 2^{-j(\frac32+\delta)}2^{-j(\frac 2p-\epsilon')}\|f\|_{L^p(\R^2)}.
  \end{split}
\end{equation}

\end{proposition}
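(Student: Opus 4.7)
The plan is to first reduce $H$ to a Fresnel-type integral in order to establish the pointwise bound \eqref{kerDH3-est} via van der Corput, and then to analyze the operator with kernel $\chi(b) H$ by exploiting the odd cancellation at $\psi:=\theta_1-\theta_2+\pi=0$. Performing the change of variable $u=s/(\sqrt{2}\,|b|)$ in the integral \eqref{def-H} defining $H$, and using the identity $\sin(\theta_1-\theta_2+\pi)=\sqrt{2}\,b\cos((\theta_1-\theta_2+\pi)/2)$ to absorb the apparent $1/b$ singularity, I would rewrite
\[
H = C\, 2^{-j(\tfrac32+\delta)}\beta(r_1+r_2)(r_1+r_2)^{-\tfrac32-\delta}a(2^j(r_1+r_2))\,\mathrm{sgn}(b)\cos\!\bigl(\tfrac{\psi}{2}\bigr)\,\Phi(2cb^2),
\]
where $c:=2^j r_1r_2/(r_1+r_2)\sim 2^j r_1 r_2$ and $\Phi(v):=\int_0^\infty e^{ivu^2}/(u^2+1)\,du$. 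Combining the trivial bound $|\Phi(v)|\le \pi/2$ with Lemma \ref{lem:VCL} applied with $k=2$ to the phase $u^2$ yields $|\Phi(v)|\lesssim \min(1,v^{-1/2})$. Since $|b|\gtrsim 1$ on $\mathrm{supp}(1-\chi(b))$, this immediately gives \eqref{kerDH3-est}, and Proposition \ref{lem:kerlplq} then controls the corresponding operator.

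For the operator with kernel $\chi(b)H$ the key feature is the odd factor $\mathrm{sgn}(b)$ as a function of $\psi$. I would insert the stationary-phase expansion $\Phi(v)=\tfrac{\sqrt{\pi}}{2\sqrt{v}}e^{i\pi/4}+R(v)$, where $|R(v)|\lesssim \min(1,v^{-3/2})$ for $v\gtrsim 1$; this splits $\chi(b)H$ into a leading term proportional to $2^{-j(\tfrac32+\delta)}c^{-1/2}\,\mathrm{sgn}(\psi)/|\sin(\psi/2)|$ (a truncated Hilbert-transform-type kernel in $\psi$ at scale $1/\sqrt{c}$) plus a remainder satisfying a pointwise bound of the form $2^{-j(\tfrac32+\delta)}(1+2^j r_1 r_2)^{-1/2}\beta(r_1+r_2)$, which is handled by Proposition \ref{lem:kerlplq}.

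For \eqref{equ:Hpqest'}, I would pass to Fourier series in $\theta$; the boundedness of the truncated Hilbert transform as a Fourier multiplier in $k$ gives
\[
\sup_{k\in\mathbb{Z}}\bigl|\widehat{\chi(b)H}(r_1,r_2,k)\bigr|\lesssim 2^{-j(\tfrac32+\delta)}(1+2^j r_1r_2)^{-1/2}\beta(r_1+r_2),
\]
so Plancherel in $\theta$ combined with the weighted Schur integration from the proof of Proposition \ref{lem:kerlplq} produces the required $2^{-j(\tfrac32+\delta)}2^{-j/2}$. For the $L^p$ bounds \eqref{equ:Hpqest} and \eqref{equ:Hpqest''}, the truncated Hilbert transform is $L^p_\theta$-bounded for all $1<p<\infty$ with norm $\lesssim c^{-1/2}$ up to a logarithmic factor from the truncation: for $p\ge 6$ this logarithm is absorbed by the slack in the $r$-weighted argument of Proposition \ref{lem:kerlplq}, giving the clean exponent $2^{-2j/p}$, while for $4\le p<6$ the margin is thin and the logarithm is exchanged for the arbitrarily small loss $2^{j\epsilon'}$.

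The main obstacle is the $L^2$ estimate: one must translate the odd cancellation $\mathrm{sgn}(\psi)$ in the Fresnel kernel $\Phi(2cb^2)$ into uniform control in $k\in\mathbb{Z}$ of the Fourier multiplier, so that the hypothesis of Proposition \ref{lem:kerlplq} holds after Fourier series. This is where the diffractive structure of $\LL_{{\A}}$ is used most delicately, and where the truncation at scale $|\psi|\sim 1/\sqrt{c}$ has to be analyzed with care to avoid a logarithmic blow-up.
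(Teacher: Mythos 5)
Your proposal is essentially sound and reaches the same estimates, but by a genuinely different (and in places cleaner) route than the paper. The paper does not evaluate the $s$-integral explicitly: it replaces $\sin\theta$ by $\theta$ to pass from $\chi(b)H$ to a model kernel $\tilde H$, proves the pointwise bound \eqref{kerDH3-est} and the comparison errors by Van der Corput (Lemma \ref{lem:VCL}), obtains the $L^2$ bound \eqref{equ:Hpqest'} from a conjugate-Poisson-kernel Fourier-transform bound $|\hat{\tilde H}(\zeta)|\lesssim 2^{-j(\frac32+\delta)}\min\big((2^jr_1r_2)^{-\frac12},|\zeta|^{-1}\big)$ (quoting \cite{BFM18}), proves \eqref{equ:Hpqest} for $p\geq 6$ by a Littlewood--Paley/Bernstein decomposition in the angular frequency (this is exactly where the threshold $p\geq6$ enters), and gets \eqref{equ:Hpqest''} by the crude $L^1_\theta$ bound $\int_0^{2\pi}|\tilde H|\,d\theta\lesssim 2^{-j(\frac32+\delta)}(1+2^jr_1r_2)^{-\frac12+\epsilon'}$ plus Young and interpolation. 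You instead rescale $s\mapsto\sqrt2|b|u$, write $H$ in terms of the Fresnel factor $\Phi(2cb^2)$, and extract via stationary phase an explicit odd leading kernel $\sim c^{-1/2}\,\mathrm{sgn}(\psi)/\psi$ truncated at $|\psi|\sim c^{-1/2}$; the uniform $L^p_\theta$-boundedness of truncated Hilbert transforms then replaces both the \cite{BFM18} multiplier lemma (for $p=2$) and the Littlewood--Paley argument (for $p\geq6$), and in fact would give the clean exponent $2^{-2j/p}$ for every $p>4$, with a loss only at the critical $r_1$-integration when $p=4$; this is at least as strong as \eqref{equ:Hpqest'}--\eqref{equ:Hpqest''}. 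You should also note explicitly that $b$ degenerates both at $\theta_1-\theta_2+\pi\to 0$ and $\to 2\pi$; your identity covers both, but the sign bookkeeping ($\cos(\psi/2)\to -1$) deserves a sentence, as in the paper's periodization remark.

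Two claims need repair, though neither threatens the strategy. First, the remainder after removing the truncated-Hilbert leading term does \emph{not} obey the pointwise bound $2^{-j(\frac32+\delta)}(1+2^jr_1r_2)^{-\frac12}$: on the region $|\psi|\lesssim (2^jr_1r_2)^{-\frac12}$ (where $v=2cb^2\lesssim 1$ and only $|\Phi|\lesssim 1$ is available) the kernel is merely $O(2^{-j(\frac32+\delta)})$ pointwise, and the $R(v)$ piece near $v\sim 1$ is likewise only $O(2^{-j(\frac32+\delta)})$. So it cannot be ``handled by Proposition \ref{lem:kerlplq}'' as literally stated; what is true, and suffices, is the Schur-type bound $\int_0^{2\pi}|\text{remainder}|\,d\theta_2\lesssim 2^{-j(\frac32+\delta)}(1+2^jr_1r_2)^{-\frac12}$, after which one repeats the $r$-weighted integration from the \emph{proof} of Proposition \ref{lem:kerlplq} (Minkowski in $r_2$, then $L^p_{r_1dr_1}$ of $(1+2^jr_1r_2)^{-\frac12}$, then H\"older in $r_2$, using $p>4$). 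Second, the truncated Hilbert transform is bounded on $L^p(\R)$, $1<p<\infty$, \emph{uniformly} in the truncation parameters --- there is no logarithmic factor; the genuine source of loss in the range $4\leq p<6$ (in fact only at $p=4$) is the critical divergence of $\int_0^1(1+2^jr_1r_2)^{-\frac p2}r_1\,dr_1$, not the angular operator norm. Since you only claim the lossy bound \eqref{equ:Hpqest''} in that range, your conclusions stand once these two points are corrected.
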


\begin{remark}Compared with \eqref{equ:Hpqest}, there is an arbitrary small loss in \eqref{equ:Hpqest''}. But this is enough for our 
main Theorem \ref{thm:LA0}.
\end{remark}

By using Lemma \ref{lem:ker-est12} and Proposition \ref{lem:kerlplq},  we obtain \eqref{est:TDjequ} with $\ell=1,2$.  For $\ell=3$,  due to Lemma \ref{lem:ker-est3} and \eqref{kerDH3-est},  we use  Proposition \ref{lem:kerlplq} again
to prove \eqref{est:TDjequ} when $b=\sqrt{2}\sin\big(\frac{\theta_1-\theta_2+\pi}2\big)$ is large. When $b$ is small, we complete the proof of \eqref{est:TDjequ}  by using \eqref{equ:Hpqest} and \eqref{equ:Hpqest''}. Therefore, we are reduce to prove Lemma \ref{lem:ker-est12}, Lemma \ref{lem:ker-est3}, whose proofs are put in the appendix section.

\begin{proof}[{\bf The proof of Proposition \ref{prop:ker-est3H}:}] 
We first prove \eqref{kerDH3-est}.
Recall
$b=\sqrt{2}\sin\big(\frac{\theta_1-\theta_2+\pi}2\big)$ and the definitions \eqref{def-H} and \eqref{psi-m}, by scaling, it suffices to show
\begin{equation}\label{kerDH3-est'}
\Big|\beta(r_1+r_2)  a(2^j(r_1+r_2)) (1-\chi(b)) \int_0^\infty e^{i2^{j+1}\frac{r_1r_2}{r_1+r_2} s^2} \, \frac{b}{s^2+b^2}
\;ds\Big|\lesssim \big(1+2^jr_1r_2\big)^{-\frac12}.
\end{equation}
If $s\geq1$, the phase function satisfies that
$$\Big|\partial_s \big(2^{j+1}\frac{r_1r_2}{r_1+r_2} s^2\big)\Big|\geq 2^{j}r_1r_2,$$
hence the integration by parts shows 
\begin{equation*}
\Big|\beta(r_1+r_2)  a(2^j(r_1+r_2)) (1-\chi(b)) \int_1^\infty e^{i2^{j+1}\frac{r_1r_2}{r_1+r_2} s^2} \, \frac{b}{s^2+b^2}
\;ds\Big|\lesssim \big(1+2^jr_1r_2\big)^{-1}.
\end{equation*}
Since, outside of the support of $\chi$, i.e. $|b|\geq \epsilon$, we have
$$\int_0^1\Big|\partial_s\Big(\frac{b}{s^2+b^2}\Big)\Big| ds\leq \frac C{|b|}\leq C_\epsilon.$$
By using the Van der Corput Lemma \ref{lem:VCL}, we have
\begin{equation*}
\Big|\beta(r_1+r_2)  a(2^j(r_1+r_2)) (1-\chi(b)) \int_0^1 e^{i2^{j+1}\frac{r_1r_2}{r_1+r_2} s^2} \, \frac{b}{s^2+b^2}
\;ds\Big|\lesssim \big(1+2^jr_1r_2\big)^{-\frac12}.
\end{equation*}
Therefore we prove \eqref{kerDH3-est'}.\vspace{0.1cm}

We next prove \eqref{equ:Hpqest}. The main issue is from $b\to 0$. As $b\to 0$, due to the periodic property of  the sine function,  $\frac12 (\theta_1-\theta_2+\pi)\to 0, \pi$ or $2\pi$.
Without loss of generality, to prove \eqref{equ:Hpqest}, we only consider the case $\theta:=\frac12 (\theta_1-\theta_2+\pi)\to 0$ since the other cases can be proved by replacing $\theta_2\pm 2\pi$ by $\theta_2$.

Let $\theta=\frac12 (\theta_1-\theta_2+\pi)$,  and define 
\begin{equation}\label{def:tH}
\begin{split}
\tilde{H}=2^{-j(\frac 32+\delta)}\chi(\theta) &(r_1+r_2)^{-\frac32-\delta}\beta(r_1+r_2) \\
&\times a(2^j(r_1+r_2))\int_0^\infty e^{i2^{j+1}\frac{r_1r_2}{r_1+r_2} s^2} \, \frac{\sqrt{2}\theta}{s^2+2\theta^2}
\;ds.
\end{split}
\end{equation}
As $\theta\to0$, then $\sin\theta\sim \theta$,  one can verify that
$$\int_0^1\Big|\partial_s\Big(\frac{\sqrt{2}\sin\theta}{s^2+2\sin^2\theta}-\frac{\sqrt{2}\theta}{s^2+2\theta^2}\Big)\Big| ds\leq  C,\quad \theta\to 0.$$
The same argument of using Van der Corput Lemma \ref{lem:VCL} as before shows
$$\big|\chi(b)H(r_1,r_2;\theta_1-\theta_2)-\tilde{H}(r_1,r_2;\theta)\big|\lesssim 2^{-j(\frac 32+\delta)}\big(1+2^jr_1r_2\big)^{-\frac12}.$$
Therefore, it suffices to show \eqref{equ:Hpqest} and \eqref{equ:Hpqest'} with $\chi(b)H$ replaced by $\tilde{H}$. We first prove \eqref{equ:Hpqest'}. For $p\geq 2$, we write
\begin{equation}\label{est:tri}
\begin{split}
  &\Big\|\int_0^\infty \int_{0}^{2\pi} \tilde{H}(r_1,r_2;\theta_1-\theta_2) f(r_2,\theta_2)\;d\theta_2\;r_2\;dr_2\Big\|_{L^p(\R^2)}
  \\&\lesssim  \Big\|\int_0^\infty \Big\|\int_{\R} \tilde{H}(r_1,r_2;\theta_1-\theta_2) \mathbbm{1}_{[0,2\pi]}(\theta_2) f(r_2,\theta_2)\;d\theta_2\Big\|_{L^p_{\theta_1}(\R)} \;r_2\;dr_2\Big\|_{L^p_{r_1dr_1}([0,+\infty)}.
  \end{split}
\end{equation}
Notice the convolution kernel and the Plancherel theorem, if $p=2$, we obtain that
\begin{equation*}
\begin{split}
& \Big\|\int_{\R} \tilde{H}(r_1,r_2;\theta_1-\theta_2) \mathbbm{1}_{[0,2\pi]}(\theta_2) f(r_2,\theta_2)\;d\theta_2\Big\|_{L^2_{\theta_1}(\R)} 
 \\&\lesssim \|\hat{\tilde{H}}(r_1,r_2;\zeta)\|_{L^\infty_{\zeta}(\R)} \big\| \widehat{f\mathbbm{1}_{[0,2\pi]} }(r_2,\zeta)\big\|_{L_{\zeta}^{2}(\R)}.
  \end{split}
\end{equation*}

\begin{lemma} Let $\tilde{H}(r_1,r_2;\theta) $ be defined in \eqref{def:tH}. Then the Fourier transform of $\tilde{H}(r_1,r_2;\theta) $ satisfies the bounds
\begin{equation}\label{FtH}
\big|\hat{\tilde{H}}(r_1,r_2;\zeta)  \big|\lesssim 2^{-j(\frac 32+\delta)}\times
\begin{cases} (2^j r_1r_2)^{-\frac12}, \quad &|\zeta|\lesssim (2^j r_1r_2)^{\frac12},\\
\frac1{|\zeta|}, \quad &|\zeta|\gg (2^j r_1r_2)^{\frac12}.
\end{cases}
\end{equation}
\end{lemma}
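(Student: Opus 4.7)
First I would simplify the kernel $\tilde{H}$: substituting $s=\sqrt{2}|\theta|u$ into the defining inner integral produces
\[
\tilde{H}(r_1,r_2;\theta)=2^{-j(\frac{3}{2}+\delta)}\,C(r_1,r_2)\,\chi(\theta)\,\mathrm{sgn}(\theta)\,G(\mu\theta^2),\qquad \mu:=\tfrac{2^{j+1}r_1 r_2}{r_1+r_2}\sim 2^j r_1 r_2,
\]
where $C(r_1,r_2)$ collects the $\theta$-independent factors and $G(t):=\int_0^\infty \frac{e^{2itu^2}}{u^2+1}\,du$. Thus $\tilde{H}$ is odd in $\theta$, uniformly bounded by $2^{-j(\frac{3}{2}+\delta)}$, and Van der Corput's lemma applied to $G$ (with $\phi''=4t$ and $1/(u^2+1)$ of bounded variation on $[0,\infty)$) gives $|G(t)|\lesssim \min(1,t^{-1/2})$. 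Differentiating and using the identity $G'(t)=i\sqrt{\pi/(2t)}\,e^{i\pi/4}-2iG(t)$ (so that the leading $t^{-1/2}$ contributions cancel and $G'(t)=O(t^{-3/2})$ for $t\geq 1$), one verifies $\|\tilde{H}'\|_{L^1(0,\infty)}\lesssim 2^{-j(\frac{3}{2}+\delta)}$.

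\textbf{Regime $|\zeta|\gg\sqrt{\mu}$.} Using the oddness, $\hat{\tilde{H}}(\zeta)=-2i\int_0^\infty \sin(\zeta\theta)\,\tilde{H}(\theta)\,d\theta$; integrating by parts once yields
\[
\hat{\tilde{H}}(\zeta)=\frac{-2i}{\zeta}\Big[\tilde{H}(0^+)+\int_0^\infty \cos(\zeta\theta)\,\tilde{H}'(\theta)\,d\theta\Big].
\]
The boundary value $\tilde{H}(0^+)=\tfrac{\pi}{2}\cdot 2^{-j(\frac{3}{2}+\delta)}C$ and the $L^1$ bound on $\tilde{H}'$ each contribute $O(2^{-j(\frac{3}{2}+\delta)})$, so $|\hat{\tilde{H}}(\zeta)|\lesssim 2^{-j(\frac{3}{2}+\delta)}/|\zeta|$, which is the desired bound (and in fact holds for every $\zeta\neq 0$).

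\textbf{Regime $|\zeta|\lesssim\sqrt{\mu}$.} Rescaling $\theta=\eta/\sqrt{\mu}$ gives $\mu\theta^2=\eta^2$ and
\[
\hat{\tilde{H}}(\zeta)=2^{-j(\frac{3}{2}+\delta)}C\,\mu^{-1/2}\,\widehat{h_\mu}(\tilde\zeta),\qquad h_\mu(\eta):=\chi(\eta/\sqrt{\mu})\,F(\eta),\quad F(\eta):=\mathrm{sgn}(\eta)\,G(\eta^2),
\]
with $\tilde\zeta:=\zeta/\sqrt{\mu}$. It then suffices to show $\|\widehat{h_\mu}\|_\infty\lesssim 1$ uniformly in $\mu$. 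By Young's inequality, $\|\widehat{h_\mu}\|_\infty\leq \|\widehat{\chi(\cdot/\sqrt{\mu})}\|_{L^1}\,\|\widehat{F}\|_\infty=\|\widehat{\chi}\|_{L^1}\,\|\widehat{F}\|_\infty$, which reduces matters to $\widehat{F}\in L^\infty$. I would decompose $F=F_0+F_\infty+F_1$, where $F_0:=\tfrac{\pi}{2}\,\mathrm{sgn}(\eta)\,\rho_0(\eta)$ with $\rho_0\in C_c^\infty$ even and $\rho_0\equiv 1$ near $0$ (isolating the behaviour $F(\eta)\to\tfrac{\pi}{2}\mathrm{sgn}(\eta)$ as $\eta\to 0$), $F_\infty:=c_0\,(1-\rho_0(\eta))/\eta$ with $c_0=\tfrac{1}{2}\sqrt{\pi/2}\,e^{i\pi/4}$ the leading coefficient from the stationary-phase expansion $G(t)=c_0\,t^{-1/2}+O(t^{-3/2})$, and $F_1:=F-F_0-F_\infty$. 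Then $F_1=O(\eta^2)$ near $0$ and $O(\eta^{-2})$ at infinity, so $F_1\in L^1$; $F_0\in L^1$ as well; and writing $F_\infty=c_0\,\mathrm{pv}(1/\eta)-c_0\,\rho_0\,\mathrm{pv}(1/\eta)$, the first piece has Fourier transform $-ic_0\pi\,\mathrm{sgn}(\tilde\zeta)$ and the second equals $\widehat{\rho_0}\ast(-ic_0\pi\,\mathrm{sgn})$, both bounded. This produces the desired bound $|\hat{\tilde{H}}(\zeta)|\lesssim 2^{-j(\frac{3}{2}+\delta)}\mu^{-1/2}\sim 2^{-j(\frac{3}{2}+\delta)}(2^jr_1r_2)^{-1/2}$.

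\textbf{Main obstacle.} The crux is the non-integrability of the tail $F(\eta)\sim c_0/\eta$: any naive Minkowski or $L^1$-Fourier estimate for $\tilde{H}$ leaves a spurious $\log\mu$ factor in the regime $|\zeta|\lesssim\sqrt{\mu}$. The remedy is to recognise this tail as a principal-value kernel, whose Fourier transform is the bounded function $-i\pi\,\mathrm{sgn}$; the logarithmic loss on the spatial side is thereby converted to a jump on the frequency side and absorbed by the sign function.
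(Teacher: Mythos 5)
Your argument is correct, and it takes a genuinely different route from the paper, which in fact gives no details at all: it only remarks that $\tilde H$ in \eqref{def:tH} is close to the kernel of \cite[Lemma 2.3]{BFM18} and that one can argue via the conjugate Poisson kernel. That hinted route transforms in $\theta$ \emph{inside} the $s$-integral, using that $\int_{\R}e^{-i\zeta\theta}\tfrac{\sqrt2\,\theta}{s^2+2\theta^2}\,d\theta$ is a constant multiple of $\mathrm{sgn}(\zeta)\,e^{-|s||\zeta|/\sqrt2}$, after which the remaining oscillatory integral $\int_0^\infty e^{i2^{j+1}\frac{r_1r_2}{r_1+r_2}s^2}e^{-|s||\zeta|/\sqrt2}\,ds$ is $\lesssim\min\big((2^jr_1r_2)^{-1/2},|\zeta|^{-1}\big)$ by Van der Corput, respectively trivial integration, the cutoff $\chi$ being absorbed by convolution with $\widehat\chi\in L^1$. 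You instead evaluate the $s$-integral first, reducing everything to the single profile $G(t)=\int_0^\infty\frac{e^{2itu^2}}{u^2+1}\,du$, and then handle the $\theta$-transform directly: oddness plus one integration by parts (with $\|\partial_\theta\tilde H\|_{L^1}$ controlled through the identity $G'(t)=i\sqrt{\pi/(2t)}\,e^{i\pi/4}-2iG(t)$ and the cancellation of the $t^{-1/2}$ terms) gives the $|\zeta|^{-1}$ bound, while the rescaling $\theta=\eta/\sqrt\mu$ together with the splitting of $F=\mathrm{sgn}\cdot G(\cdot^{\,2})$ into an $L^1$ part plus the tail $c_0(1-\rho_0)/\eta$, read as a smoothly truncated principal value whose Fourier transform is a bounded multiple of $\mathrm{sgn}$, gives the $\mu^{-1/2}\sim(2^jr_1r_2)^{-1/2}$ bound without the logarithmic loss. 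Both routes ultimately rest on the same fact---the Hilbert-transform/conjugate-Poisson kernel has bounded Fourier transform---but the paper's cited route is shorter because the $\theta$-transform is explicit before any asymptotics are needed, whereas yours is self-contained at the price of the endpoint stationary-phase expansion $G(t)=c_0t^{-1/2}+O(t^{-3/2})$ and a justification of the $G'$ identity (both standard: one integration by parts plus Van der Corput). One small slip, which is harmless: near $\eta=0$ one has $G(\eta^2)-\tfrac\pi2=O(|\eta|)$, since $G'(t)\sim c\,t^{-1/2}$ as $t\to0^+$, so $F_1=O(|\eta|)$ rather than $O(\eta^2)$ there; integrability of $F_1$ near the origin, which is all you use, is unaffected.
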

\begin{proof}
The $\tilde{H}(r_1,r_2;\theta)$ given in \eqref{def:tH} is close to $\tilde{H}$ of \cite[Lemma 2.3]{BFM18}. One can use conjugate Poisson kernel to prove this lemma, so we omit the details. 
\end{proof}

Therefore, due to the fact that $r_1+r_2\sim 1$, we finally show \eqref{equ:Hpqest'}
\begin{equation*}
\begin{split}
  &\Big\|\int_0^\infty \int_{0}^{2\pi} \tilde{H}(r_1,r_2;\theta_1-\theta_2) f(r_2,\theta_2)\;d\theta_2\;r_2\;dr_2\Big\|_{L^2(\R^2)}
  \\&\lesssim 2^{-j(\frac 32+\delta)} 2^{-\frac j2} \Big\|\int_0^1 (r_1r_2)^{-\frac12}\big\| f(r_2,\theta_2)\big\|_{L_{\theta_2}^{2}([0,2\pi])} \;r_2\;dr_2\Big\|_{L^2_{r_1dr_1}([0,1])}
    \\&\lesssim 2^{-j(\frac 32+\delta)} 2^{-\frac j2} \|f\|_{L^2(\R^2)}.
  \end{split}
\end{equation*}

Now we prove \eqref{equ:Hpqest}. We noticed that $r_1+r_2\sim 1$ due to the compact support of $\beta$, hence $f$ has compact support.
Recalling the partition of unity
$$\beta_0(r)=1-\sum_{j\geq1}\beta_j(r),\quad \beta_j(r)=\beta(2^{-j}r),\quad\beta\in\mathcal{C}_c^\infty\big(\big[\tfrac38,\tfrac43\big]\big),$$
we define $\tilde{H}_\ell$ by
\begin{equation}
\widehat{\tilde{H}_\ell}(r_1,r_2,\zeta)=\beta_{\ell}(|\zeta|) \hat{\tilde{H}}(r_1,r_2;\zeta).
\end{equation}
Thus, using the Bernstein inequality and \eqref{FtH}, we have
\begin{equation}\label{FtH'}
\begin{split}
\Big\|\tilde{H}_\ell (r_1,r_2;\cdot)\ast f(\theta_1)\Big\|_{L^p_{\theta_1}(\R)}& \lesssim 2^{-j(\frac 32+\delta)}\\
&\times
\begin{cases} (2^j r_1r_2)^{-\frac12}2^{\ell(\frac12-\frac1p)}\|f_\ell\|_{L^2_{\theta_2}}, \quad &2^\ell \lesssim (2^j r_1r_2)^{\frac12},\\
2^{-\ell(\frac12+\frac1p)}\|f_\ell\|_{L^2_{\theta_2}}, \quad &2^{\ell}\gg (2^j r_1r_2)^{\frac12},
\end{cases}
\end{split}
\end{equation}
where $f_\ell$ is defined by $\widehat{ f_\ell}=\beta_\ell(\zeta) \hat{f}(\zeta)$.
Then, by using the Littlewood-Paley square function estimates, the Minkowski inequality and \eqref{FtH'}, we have 
\begin{equation}\label{key1}
\begin{split}
& \Big\|\int_{\R} \tilde{H}(r_1,r_2;\theta_1-\theta_2) \mathbbm{1}_{[0,2\pi]}(\theta_2) f(r_2,\theta_2)\;d\theta_2\Big\|^2_{L^p_{\theta_1}(\R)} \\
&= \Big\|\sum_{\ell=0}^\infty \tilde{H}_\ell (r_1,r_2;\cdot)\ast (\mathbbm{1}_{[0,2\pi]}f)(\theta_1)\Big\|^2_{L^p_{\theta_1}(\R)} 
= \sum_{\ell=0}^\infty \Big\|\tilde{H}_\ell (r_1,r_2;\cdot)\ast (\mathbbm{1}_{[0,2\pi]}f)(\theta_1)\Big\|^2_{L^p_{\theta_1}(\R)} 
 \\&\lesssim 2^{-2j(\frac 32+\delta)}\Big( (2^j r_1r_2)^{-1}\sum_{2^\ell \lesssim (2^j r_1r_2)^{\frac12}}2^{\ell(1-\frac2p)}\|(\mathbbm{1}_{[0,2\pi]}f)_\ell\|^2_{L^2_{\theta_2}}
 \\&\qquad\qquad\qquad+ \sum_{2^\ell \gg (2^j r_1r_2)^{\frac12}}
 2^{-2\ell(\frac12+\frac1p)}\|(\mathbbm{1}_{[0,2\pi]}f)_\ell\|^2_{L^2_{\theta_2}}\Big).
  \end{split}
\end{equation}
To estimate \eqref{equ:Hpqest}, by using Minkowski's inequality, we write
\begin{equation}
\begin{split}
&\int_0^\infty \Big\|\int_{\R} \tilde{H}(r_1,r_2;\theta_1-\theta_2) \mathbbm{1}_{[0,2\pi]}(\theta_2) f(r_2,\theta_2)\;d\theta_2\Big\|_{L^p(d\theta_1r_1dr_1)} \;r_2\;dr_2\\
&\lesssim \int_0^1 \Big(\int_{0}^1 \big\|\tilde{H}(r_1,r_2;\cdot) \ast(\mathbbm{1}_{[0,2\pi]}f)(r_2,\theta_1)\big\|^p_{L^p(d\theta_1)} r_1dr_1\Big)^{1/p} \;r_2\;dr_2.
  \end{split}
\end{equation}
This together with \eqref{key1} yields
\begin{equation}
\begin{split}
&\int_0^\infty \Big\|\int_{\R} \tilde{H}(r_1,r_2;\theta_1-\theta_2) \mathbbm{1}_{[0,2\pi]}(\theta_2) f(r_2,\theta_2)\;d\theta_2\Big\|_{L^p(d\theta_1r_1dr_1)} \;r_2\;dr_2\\
&\lesssim \int_0^1 \Big(\int_{0}^1 \big\|\tilde{H}(r_1,r_2;\cdot) \ast(\mathbbm{1}_{[0,2\pi]}f)(r_2,\theta_1)\big\|^p_{L^p(d\theta_1)} r_1dr_1\Big)^{1/p} \;r_2\;dr_2,\\
&\lesssim 2^{-j(\frac 32+\delta)}  \int_0^1\Big[ \int_0^1 \Big( (2^j r_1r_2)^{-1}\sum_{2^\ell \lesssim (2^j r_1r_2)^{\frac12}}2^{\ell(1-\frac2p)}\|(\mathbbm{1}_{[0,2\pi]}f)_\ell\|^2_{L^2_{\theta_2}}
 \\&\qquad\qquad+ \sum_{2^\ell \gg (2^j r_1r_2)^{\frac12}}
 2^{-2\ell(\frac12+\frac1p)}\|(\mathbbm{1}_{[0,2\pi]}f)_\ell\|^2_{L^2_{\theta_2}}\Big)^{\frac p2} \;r_1\;dr_1\Big]^{1/p}\;r_2\;dr_2.
  \end{split}
\end{equation}
We use Minkowski's inequality to estimate the first term
\begin{equation}
\begin{split}
&\Big(\int_0^1 \Big((2^j r_1r_2)^{-1}\sum_{2^\ell \lesssim (2^j r_1r_2)^{\frac12}}2^{\ell(1-\frac2p)}\|(\mathbbm{1}_{[0,2\pi]}f)_\ell\|^2_{L^2_{\theta_2}}\Big)^{\frac p2}\;r_1\;dr_1\Big)^{\frac2p}\\
&\lesssim (2^j r_2)^{-1}\sum_{\ell\geq 0}2^{\ell(1-\frac2p)}\|(\mathbbm{1}_{[0,2\pi]}f)_\ell\|^2_{L^2_{\theta_2}}\;\Big(\int_{2^{2\ell}(2^jr_2)^{-1} \lesssim r_1}   r_1^{1-\frac p2}\;dr_1\Big)^{\frac2p}\\
&\lesssim (2^j r_2)^{-\frac4p}\sum_{\ell\geq 0}2^{2\ell(\frac3p-\frac12)}\|(\mathbbm{1}_{[0,2\pi]}f)_\ell\|^2_{L^2_{\theta_2}}.
  \end{split}
\end{equation}
For the second term, we similarly obtain
\begin{equation}
\begin{split}
&\Big(\int_0^1 \Big(\sum_{2^\ell \gg (2^j r_1r_2)^{\frac12}}
 2^{-2\ell(\frac12+\frac1p)}\|(\mathbbm{1}_{[0,2\pi]}f)_\ell\|^2_{L^2_{\theta_2}}\Big)^{\frac p2}\;r_1\;dr_1\Big)^{\frac2p}\\
&\lesssim \sum_{\ell\geq 0}2^{-2\ell(\frac12+\frac1p)}\|(\mathbbm{1}_{[0,2\pi]}f)_\ell\|^2_{L^2_{\theta_2}}\;\Big(\int_{ r_1\ll 2^{2\ell}(2^jr_2)^{-1} }   r_1\;dr_1\Big)^{\frac2p}\\
&\lesssim (2^jr_2)^{-\frac4p}\sum_{\ell\geq 0}2^{2\ell(\frac3p-\frac12)}\|(\mathbbm{1}_{[0,2\pi]}f)_\ell\|^2_{L^2_{\theta_2}}.
  \end{split}
\end{equation}
If $p\geq 6$, then we obtain
\begin{equation}
\begin{split}
&\int_0^\infty \Big\|\int_{\R} \tilde{H}(r_1,r_2;\theta_1-\theta_2) \mathbbm{1}_{[0,2\pi]}(\theta_2) f(r_2,\theta_2)\;d\theta_2\Big\|_{L^p_{\theta_1}(\R)} \;r_2\;dr_2\\
&\lesssim 2^{-j(\frac 32+\delta)} \int_0^1 (2^jr_2)^{-\frac2p}\|f\|_{L^p_{\theta_2}([0,2\pi))}\;r_2\;dr_2\\
&\lesssim 2^{-j(\frac 32+\delta)} 2^{-j\frac2p}.
  \end{split}
\end{equation}
This proves \eqref{equ:Hpqest}.\vspace{0.2cm}

We finally prove  \eqref{equ:Hpqest''}. For this purpose, we need a lemma.
\begin{lemma} Define $I_j(r_1, r_2; \theta)$ to be integral 
\begin{equation}\label{def:I}
I_j(r_1, r_2; \theta)= \beta(r_1+r_2)\int_0^\infty e^{i2^{j+1}\frac{r_1r_2}{r_1+r_2} s^2} \, \frac{\sqrt{2}\theta}{s^2+2\theta^2}
\;ds.
\end{equation}
Then there exists a constant $C$ such that
\begin{equation}\label{est:I}
|I_j(r_1, r_2; \theta)|\leq C(1+2^j r_1r_2\theta^2)^{-\frac12}.
\end{equation}

\end{lemma}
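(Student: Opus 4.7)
\textbf{Proof plan for \eqref{est:I}.} My plan is to establish two separate bounds for $|I_j|$, namely a trivial uniform bound $|I_j|\lesssim 1$ and an oscillatory decay bound $|I_j|\lesssim (2^jr_1r_2\theta^2)^{-1/2}$, and then combine them via the elementary inequality $\min\{1,x^{-1/2}\}\lesssim (1+x)^{-1/2}$. Because $\beta$ is supported in $[\tfrac{3}{8},\tfrac{4}{3}]$, on the support of the amplitude we have $r_1+r_2\sim 1$, hence $\tfrac{r_1r_2}{r_1+r_2}\sim r_1r_2$, and the effective phase parameter will be
\[
\mu:=2^{j+2}\,\frac{r_1r_2}{r_1+r_2}\,\theta^2\sim 2^jr_1r_2\theta^2 .
\]
The first step is the rescaling $u=s/(\sqrt{2}\,|\theta|)$, which converts \eqref{def:I} into
\[
I_j=\mathrm{sgn}(\theta)\,\beta(r_1+r_2)\int_0^\infty e^{i\mu u^2}\,\frac{du}{1+u^2}.
\]
The uniform bound $|I_j|\lesssim 1$ is then immediate from $\int_0^\infty(1+u^2)^{-1}\,du=\pi/2$, so \eqref{est:I} is already proved whenever $\mu\lesssim 1$.

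For the decay regime $\mu\gg 1$, I would split the $u$-integral at $u=1$. On $[0,1]$, the phase $\phi(u)=u^2$ satisfies $|\phi''(u)|=2\ge 1$ and the amplitude $\psi(u)=(1+u^2)^{-1}$ is monotone and bounded at the endpoints with total variation at most $1/2$; Lemma \ref{lem:VCL} with $k=2$ therefore gives
\[
\Bigl|\int_0^1 e^{i\mu u^2}\,\frac{du}{1+u^2}\Bigr|\lesssim \mu^{-1/2}.
\]
On $[1,\infty)$ the phase derivative $\phi'(u)=2u$ is monotone and bounded below, so one integration by parts using the identity $e^{i\mu u^2}=(2i\mu u)^{-1}\partial_u e^{i\mu u^2}$ produces a boundary term of size $O(\mu^{-1})$ at $u=1$, a vanishing boundary term at infinity, and a remainder integral whose new amplitude $\partial_u\bigl(\tfrac{1}{u(1+u^2)}\bigr)$ is absolutely integrable on $[1,\infty)$. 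Hence this piece is also $O(\mu^{-1})$, and adding the two contributions yields $|I_j|\lesssim \mu^{-1/2}$ in the regime $\mu\gg 1$.

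The only mildly delicate point is that Lemma \ref{lem:VCL} is formulated on a bounded interval, so the unbounded tail $u\in[1,\infty)$ cannot be treated by a direct application of Van der Corput; instead I handle it by an integration by parts, which is in fact more favorable and gives $O(\mu^{-1})$ rather than $O(\mu^{-1/2})$. No further difficulty is expected: the compact support of $\beta$ eliminates any issue with the behavior of $r_1+r_2$, and combining the uniform bound with the oscillatory bound yields
\[
|I_j|\;\lesssim\;\min\{1,\mu^{-1/2}\}\;\lesssim\;(1+\mu)^{-1/2}\;\sim\;(1+2^jr_1r_2\theta^2)^{-1/2},
\]
which is exactly \eqref{est:I}.
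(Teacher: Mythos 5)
Your proof is correct and follows essentially the same route as the paper: rescale $s$ by $|\theta|$ to normalize the amplitude, take the trivial bound for small $2^jr_1r_2\theta^2$, and use Van der Corput (Lemma \ref{lem:VCL}) for the oscillatory decay, using $r_1+r_2\sim 1$ on the support of $\beta$. The only difference is cosmetic: the paper applies Van der Corput directly on the half-line (the amplitude has finite total variation and vanishes at infinity), whereas you split at $u=1$ and treat the tail by integration by parts, which is an equally valid way to handle the unbounded interval.
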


\begin{proof} Using variable changes $s\to \frac{s\theta}{\sqrt{2}}$, we have 
\begin{equation*}
I_j(r_1, r_2; \theta)= \beta(r_1+r_2)\int_0^\infty e^{i2^{j}\frac{r_1r_2}{r_1+r_2} \theta^2 s^2} \, \frac{2}{s^2+4}
\;ds.
\end{equation*}
It is easy to see that $|I_j(r_1, r_2; \theta)|\leq C$. By using Van der Corput Lemma \ref{lem:VCL}, due to that
$$\partial_s^2\big(2^{j}\frac{r_1r_2}{r_1+r_2} \theta^2 s^2\big)\geq 2^{j} r_1r_2 \theta^2,$$we have
\begin{equation*}
|I_j(r_1, r_2; \theta)|\leq C (2^j r_1r_2\theta^2)^{-\frac12}.
\end{equation*}

\end{proof}
Recall the definition of  $\tilde{H}$ in \eqref{def:tH},
\begin{equation}
\begin{split}
\tilde{H}&=2^{-j(\frac 32+\delta)}\chi(\theta) (r_1+r_2)^{-\frac32-\delta} a(2^j(r_1+r_2))I_j(r_1, r_2; \theta),
\end{split}
\end{equation}
we obtain, for any $0<\epsilon'\ll 1$,
\begin{equation}
\begin{split}
\int_0^{2\pi} |\tilde{H}(r_1,r_2;\theta)|\, d\theta &\lesssim  2^{-j(\frac 32+\delta)}\int_0^\epsilon \big(1+2^jr_1r_2\theta^2\big)^{-\frac12}\, d\theta
\\&\lesssim 2^{-j(\frac 32+\delta)} \big(1+2^jr_1r_2\big)^{-\frac12+\epsilon'}.
\end{split}
\end{equation}
Using the Young inequality, we obtain that
\begin{equation}\label{equ:Hpqest4}
\begin{split}
&\int_0^\infty \Big\|\int_{\R} \tilde{H}(r_1,r_2;\theta_1-\theta_2) \mathbbm{1}_{[0,2\pi]}(\theta_2) f(r_2,\theta_2)\;d\theta_2\Big\|_{L^4(d\theta_1r_1dr_1)} \;r_2\;dr_2\\
&\lesssim \int_0^1 \Big(\int_{0}^1 \big\|\tilde{H}(r_1,r_2;\cdot) \ast(\mathbbm{1}_{[0,2\pi]}f)(r_2,\theta_1)\big\|^4_{L^4(d\theta_1)} r_1dr_1\Big)^{1/4} \;r_2\;dr_2\\
&\lesssim 2^{-j(\frac 32+\delta)} \int_0^1 \Big(\int_{0}^1  \big(1+2^jr_1r_2\big)^{-2+\epsilon'} r_1dr_1\Big)^{1/4} \big\|(\mathbbm{1}_{[0,2\pi]}f)(r_2,\theta_2)\big\|_{L^4(d\theta_2)} \;r_2\;dr_2\\
&\lesssim 2^{-j(\frac 32+\delta)} \int_0^1 \big(1+2^jr_2\big)^{-\frac12+\epsilon'} \big\|f(r_2,\theta_2)\big\|_{L^4_{\theta_2}([0,2\pi])} \;r_2\;dr_2\\
&\lesssim 2^{-j(\frac 32+\delta)} 2^{-j(\frac12-\epsilon')}\big\|f\big\|_{L^4(\R^2)} .
  \end{split}
\end{equation}
Therefore, \eqref{equ:Hpqest''} follows by interpolating with \eqref{equ:Hpqest4} and  \eqref{equ:Hpqest}.
\end{proof}

\medskip

\section{Appendix: The proofs of technical lemmas}

In this section, we provide the detail proof of the technical lemmas which serve to the proof of Proposition \ref{prop:TGj} and Proposition \ref{prop:TDj}.

\subsection{The lemma for Proposition \ref{prop:TGj} } In this subsection, we main prove Lemma  \ref{lem:keyker}. To this end,
we define
 \begin{equation}\label{Psi}
\begin{split}
\Psi(r_1,r_2;\theta_1,\theta_2,\theta_2', \tilde{\theta}_2,\tilde{\theta}_2')=\Phi(r_1,r_2;\theta_1,\theta_2,\theta_2')-\Phi(r_1,r_2;\theta_1,\tilde{\theta}_2,\tilde{\theta}_2').
\end{split}
\end{equation}
where 
\begin{equation}\label{Phi}
\begin{split}
\Phi(r_1,r_2;\theta_1,\theta_2,\theta_2')
=d_G(r_1,r_2; \theta_1-\theta_2)-d_G(r_1,r_2; \theta_1-\theta'_2),
\end{split}
\end{equation}
and
 $$d_G(r_1,r_2; \theta_1-\theta_2)=|x-y|=\sqrt{r_1^2+r_2^2-2r_1r_2\cos(\theta_1-\theta_2)}.$$ 
We aim to obtain the lower bounds for 
\begin{equation}
\begin{split}
&\big|\nabla_{r_1, \theta_1} \Psi(r_1,r_2;\theta_1,\theta_2,\theta_2', \tilde{\theta}_2,\tilde{\theta}_2')\big|\\
&=\Big|\nabla_{r_1, \theta_1}\big[\Phi(r_1,r_2;\theta_1,\theta_2,\theta_2')-\Phi(r_1,r_2;\theta_1,\tilde{\theta}_2,\tilde{\theta}_2')\big]\Big|.
\end{split}
\end{equation}
 In fact, from \eqref{Phi}, we see
\begin{equation}
\begin{split}
&\nabla_{r_1, \theta_1}\big[\Phi(r_1,r_2;\theta_1,\theta_2,\theta_2')-\Phi(r_1,r_2;\theta_1,\tilde{\theta}_2,\tilde{\theta}_2')\big]\\
&=\left(\begin{array}{cc}\frac{\partial^2 \Phi(r_1,r_2;\theta_1,\theta_2,\theta_2')}{\partial r_1\partial\theta_2} & \frac{\partial^2 \Phi(r_1,r_2;\theta_1,\theta_2,\theta_2')}{\partial r_1\partial\theta'_2} \\\frac{\partial^2 \Phi(r_1,r_2;\theta_1,\theta_2,\theta_2')}{\partial\theta_1\partial\theta_2} & \frac{\partial^2 \Phi(r_1,r_2;\theta_1,\theta_2,\theta_2')}{\partial \theta_1\partial\theta'_2}\end{array}\right) \left(\begin{array}{c}\theta_2-\tilde{\theta}_2\\ \theta'_2-\tilde{\theta}'_2 \end{array}\right)+O(|\theta_2-\tilde{\theta}_2|^2+|\theta'_2-\tilde{\theta}'_2|^2)\\
&=\left(\begin{array}{cc}-\frac{\partial^2 d_G(r_1,r_2; \theta_1-\theta_2)}{\partial r_1\partial\theta_1} & \frac{\partial^2 d_G(r_1,r_2; \theta_1-\theta'_2)}{\partial r_1\partial\theta_1} \\-\frac{\partial^2 d_G(r_1,r_2; \theta_1-\theta_2)}{\partial\theta_1\partial\theta_1} & \frac{\partial^2 d_G(r_1,r_2; \theta_1-\theta'_2)}{\partial \theta_1\partial\theta_1}\end{array}\right) \left(\begin{array}{c}\theta_2-\tilde{\theta}_2\\ \theta'_2-\tilde{\theta}'_2 \end{array}\right)+O(|\theta_2-\tilde{\theta}_2|^2+|\theta'_2-\tilde{\theta}'_2|^2).
\end{split}
\end{equation}
Recall \eqref{assu:f}, one has the facts that $\theta_2, \theta'_2, \tilde{\theta}_2, \tilde{\theta}'_2\in[0,\epsilon]$.
The determinant 
\begin{equation}\label{eq:det}
\begin{split}
&\text{det}\left(\begin{array}{cc}-\frac{\partial^2 d_G(r_1,r_2; \theta_1-\theta_2)}{\partial r_1\partial\theta_1} & \frac{\partial^2 d_G(r_1,r_2; \theta_1-\theta'_2)}{\partial r_1\partial\theta_1} \\-\frac{\partial^2 d_G(r_1,r_2; \theta_1-\theta_2)}{\partial\theta_1\partial\theta_1} & \frac{\partial^2 d_G(r_1,r_2; \theta_1-\theta'_2)}{\partial \theta_1\partial\theta_1}\end{array}\right)\\
&=\frac{\partial^2 d_G(\theta_1-\theta_2) }{\partial\theta_1^2}\frac{\partial^2 d_G( \theta_1-\theta'_2)}{\partial r_1\partial\theta_1}-\frac{\partial^2 d_G(\theta_1-\theta'_2)}{\partial\theta_1^2}\frac{\partial^2 d_G( \theta_1-\theta_2)}{\partial r_1\partial\theta_1}
\end{split}
\end{equation}
 vanishes on the diagonal $\theta_2=\theta'_2$. However, we can verify that
\begin{equation}
\begin{split}
\Big|\text{det}\left(\begin{array}{cc}-\frac{\partial^2 d_G(r_1,r_2; \theta_1-\theta_2)}{\partial r_1\partial\theta_1} & \frac{\partial^2 d_G(r_1,r_2; \theta_1-\theta'_2)}{\partial r_1\partial\theta_1} \\-\frac{\partial^2 d_G(r_1,r_2; \theta_1-\theta_2)}{\partial\theta_1\partial\theta_1} & \frac{\partial^2 d_G(r_1,r_2; \theta_1-\theta'_2)}{\partial \theta_1\partial\theta_1}\end{array}\right)\Big|\geq cr_1 r_2^{3}|\theta_2-\theta_2'|
\end{split}
\end{equation}
for some small constant $c>0$ when $|\theta_2-\theta'_2|\leq \epsilon\ll 1$. Indeed, the right hand side of \eqref{eq:det} equals 
\begin{equation}
\begin{split}
&\frac{\partial^2 d_G(\theta_1-\theta_2) }{\partial\theta_1^2}\frac{\partial^2 d_G( \theta_1-\theta'_2)}{\partial r_1\partial\theta_1}-\frac{\partial^2 d_G(\theta_1-\theta'_2)}{\partial\theta_1^2}\frac{\partial^2 d_G( \theta_1-\theta_2)}{\partial r_1\partial\theta_1}\\
&=\Big[\frac{\partial^2 d_G(\theta_1-\theta'_2)}{\partial\theta_1^2}-\frac{\partial^3 d_G (\theta_1-\theta'_2)}{\partial\theta_1^2\partial\theta_1} (\theta_2-\theta'_2)+O(|\theta_2-\theta'_2|^2)\Big]\frac{\partial^2 d_G( \theta_1-\theta'_2)}{\partial r_1\partial\theta_1}\\
&-\frac{\partial^2 d_G(\theta_1-\theta'_2)}{\partial\theta_1^2}\Big[ \frac{\partial^2 d_G( \theta_1-\theta_2')}{\partial r_1\partial\theta_1}-\frac{\partial^3 d_G( \theta_1-\theta_2')}{\partial r_1\partial^2\theta_1}(\theta_2-\theta'_2)+O(|\theta_2-\theta'_2|^2)\Big]\\
&=-\Big[ \frac{\partial^3 d_G (\theta_1-\theta'_2)}{\partial\theta_1^2\partial\theta_1}\frac{\partial^2 d_G( \theta_1-\theta'_2)}{\partial r_1\partial\theta_1}- \frac{\partial^2 d_G(\theta_1-\theta'_2)}{\partial\theta_1^2}\frac{\partial^3 d_G( \theta_1-\theta_2')}{\partial r_1\partial^2\theta_1}\Big](\theta_2-\theta'_2)+O(|\theta_2-\theta'_2|^2)\\
&=\text{det}\left(\begin{array}{cc}\frac{\partial^2 d_G(\theta_1-\theta'_2)}{\partial r_1\partial\theta_1} & \frac{\partial^2 d_G(\theta_1-\theta'_2)}{\partial\theta_1^2} \\\frac{\partial^3 d_G(\theta_1-\theta'_2)}{\partial r_1\partial\theta_1^2} & \frac{\partial^3 d_G(\theta_1-\theta'_2)}{\partial \theta_1^3}\end{array}\right)(\theta_2-\theta'_2)+O(|\theta_2-\theta'_2|^2).
\end{split}
\end{equation}

\begin{lemma}\label{lem:det} Let $d_G(r_1, r_2, \theta_1-\theta_2)=|x-y|=(r_1^2+r_2^2-2r_1r_2\cos(\theta_1-\theta_2))^{\frac 1 2}$, then
\begin{equation}
\begin{split}
\text{det}\left(\begin{array}{cc}\frac{\partial^2 d_G}{\partial r_1\partial\theta_1} & \frac{\partial^2 d_G}{\partial\theta_1^2} \\\frac{\partial^3 d_G}{\partial r_1\partial\theta_1^2} & \frac{\partial^3 d_G}{\partial \theta_1^3}\end{array}\right)=\frac{r_1 r_2^{3}(r_1 \cos (\theta_1-\theta_2)-r_2)^{3}}{|x-y|^6}.
\end{split}
\end{equation}
\end{lemma}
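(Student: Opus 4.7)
My plan is to expand the determinant by direct iterated differentiation of $d := d_G$. Write $\theta := \theta_1 - \theta_2$ so that $\partial_{\theta_1} = \partial_\theta$. Differentiating $d^2 = r_1^2 + r_2^2 - 2 r_1 r_2 \cos\theta$ once in each variable yields the fundamental identities
\[
d\,\partial_{r_1} d = r_1 - r_2\cos\theta, \qquad d\,\partial_\theta d = r_1 r_2 \sin\theta,
\]
which I will use repeatedly as a substitution rule to eliminate first derivatives whenever they appear under the quotient rule.

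Iterating produces the second derivatives
\[
\partial^2_{r_1\theta_1} d = \frac{r_2^2 \sin\theta\,(r_2 - r_1\cos\theta)}{d^3}, \qquad \partial^2_{\theta_1^2} d = -\frac{r_1 r_2\,(r_1 - r_2\cos\theta)(r_2 - r_1\cos\theta)}{d^3},
\]
where the second simplification uses the algebraic identity $d^2 \cos\theta - r_1 r_2 \sin^2\theta = -(r_1 - r_2\cos\theta)(r_2 - r_1\cos\theta)$. Note that both already display the factor $(r_2 - r_1\cos\theta) = -(r_1\cos\theta - r_2)$ that must reappear to the third power in the final answer. A further round of differentiation, again using the substitution rule to kill any first derivatives, yields explicit formulas for $\partial^3_{r_1\theta_1^2} d$ and $\partial^3_{\theta_1^3} d$ as rational functions over $d^5$.

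Substituting these four expressions into the determinant gives a rational function which a priori has denominator $d^8$. The key claim is then that a factor $d^2 = r_1^2 + r_2^2 - 2r_1r_2\cos\theta$ drops out of the numerator after expansion, leaving the clean form $r_1 r_2^3 (r_1\cos\theta - r_2)^3 / d^6$. The main obstacle is precisely this last algebraic step: the two third derivatives are each sums of several terms, so the cancellation must be tracked carefully. I would group the terms of the determinant so as to preserve a common factor $(r_2 - r_1\cos\theta)$ already visible in the second derivatives, then reduce the remaining polynomial expressions using $\sin^2\theta = 1 - \cos^2\theta$. A useful sanity check at the end is that the right-hand side is manifestly independent of $\sin\theta$ and a polynomial in $\cos\theta$; any residual odd power of $\sin\theta$ surviving after full simplification would flag an arithmetic error.
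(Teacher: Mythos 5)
Your setup is sound and follows essentially the same route as the paper: differentiate $d_G$ directly and expand the $2\times 2$ determinant. Your substitution rules $d\,\partial_{r_1}d=r_1-r_2\cos\theta$, $d\,\partial_\theta d=r_1r_2\sin\theta$ are correct, and so are your factored second derivatives, including the identity $d^2\cos\theta-r_1r_2\sin^2\theta=-(r_1-r_2\cos\theta)(r_2-r_1\cos\theta)$; in fact your factored bookkeeping is cleaner than the paper's raw expansion. The problem is that you stop exactly where the lemma lives: you never write the third derivatives $\partial^3_{r_1\theta_1^2}d_G$ and $\partial^3_{\theta_1^3}d_G$, and the cancellation of the spurious $d^2$ from the numerator is only announced as "the key claim", with a description of how you would organize the algebra. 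Since the statement is a pure computational identity, asserting that the expansion closes to $r_1r_2^3(r_1\cos\theta-r_2)^3/d_G^6$ without carrying it out leaves the proof incomplete; the paper does perform this computation explicitly.

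For what it is worth, your framework does close, and quickly, if you push it one step further. Set $u=r_1-r_2\cos\theta$, $v=r_2-r_1\cos\theta$, $s=\sin\theta$, so that $\partial^2_{r_1\theta_1}d_G=r_2^2sv\,d_G^{-3}$ and $\partial^2_{\theta_1^2}d_G=-r_1r_2uv\,d_G^{-3}$. Differentiating once more (using $\partial_\theta u=r_2s$, $\partial_\theta v=r_1s$, $\partial_{r_1}u=1$, $\partial_{r_1}v=-\cos\theta$) gives
\begin{equation*}
\partial^3_{\theta_1^3}d_G=-\frac{r_1r_2s}{d_G}+\frac{3r_1^2r_2^2suv}{d_G^5},\qquad
\partial^3_{r_1\theta_1^2}d_G=-\frac{r_2uv}{d_G^3}-\frac{r_1r_2(v-u\cos\theta)}{d_G^3}+\frac{3r_1r_2u^2v}{d_G^5},
\end{equation*}
where the first simplification uses $r_1u+r_2v=d_G^2$. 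Expanding the determinant and using $u^2+r_2^2s^2=d_G^2$ to collapse the $d_G^{-8}$ terms, the numerator factors as $r_1r_2^2v$ times a bracket that reduces, via $s^2=1-\cos^2\theta$, to $-r_2v^2$, giving $-r_1r_2^3v^3\,d_G^{-6}=r_1r_2^3(r_1\cos\theta-r_2)^3d_G^{-6}$ as claimed. Spelling out this last computation (or an equivalent one) is what is missing from your proposal. As a side remark, the displayed final line of the paper's own computation contains a typo, $(r_2\cos(\theta_1-\theta_2)-r_2)^3$, while the lemma statement and the calculation above give $(r_1\cos(\theta_1-\theta_2)-r_2)^3$.
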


We postpone the proof a moment. Due to the compact support of $\eta$,  we note that
$\theta_1-\theta_2, \theta_1-\theta'_2\to \pi$ as $\epsilon\to 0$. Hence the facts $|x-y|, |x-z|\sim 1$ again implies $r_1+r_2\sim 1$.
Then,  as $\epsilon\to 0$,  we compute that
\begin{equation}
\begin{split}
\left|\text{det}\left(\begin{array}{cc}\frac{\partial^2 d_G}{\partial r_1\partial\theta_1} & \frac{\partial^2 d_G}{\partial\theta_1^2} \\\frac{\partial^3 d_G}{\partial r_1\partial\theta_1^2} & \frac{\partial^3 d_G}{\partial \theta_1^3}\end{array}\right)\right|=\frac{r_1 r_2^{3}|r_1 \cos (\theta_1-\theta'_2)-r_2|^{3}}{|x-y|^6}\geq cr_1 r_2^{3}.
\end{split}
\end{equation}
Now we prove \eqref{est:keyker}. From \eqref{bfKj} and the fact $r_1+r_2\sim 1$, by using the integration by parts once, we have 
 \begin{equation}
\begin{split}
& \big|{\bf \mathcal{K}}_{G_1, r_2}^j (\theta_2,\theta'_2; \tilde{\theta}_2, \tilde{\theta}'_2)\big|\\
 &\lesssim  
2^{-4j(\frac32+\delta)} \int_0^1 \int_{\pi-2\epsilon}^{\Theta}   \Big(1+2^jr_1r_2^3(|\theta_2-\tilde{\theta}_2|+|\theta'_2-\tilde{\theta}'_2|)\Big)^{-1} r_1 dr_1 d\theta_1\\
 &\lesssim  
2^{-4j(\frac32+\delta)}\Big( \int_{r_1\leq \min\big\{1, \big(2^jr_2^3(|\theta_2-\tilde{\theta}_2|+|\theta'_2-\tilde{\theta}'_2|)\big)^{-1} \big\}}   r_1 dr_1 \\
&\qquad\qquad\qquad+\big(2^jr_2^3(|\theta_2-\tilde{\theta}_2|+|\theta'_2-\tilde{\theta}'_2|)\big)^{-1} \int_{r_1\leq 1}   dr_1 \Big)\\
&\lesssim  
2^{-4j(\frac32+\delta)}\big(2^jr_2^3(|\theta_2-\tilde{\theta}_2|+|\theta'_2-\tilde{\theta}'_2|)\big)^{-1}. 
\end{split}
\end{equation}
Therefore, we have proved Lemma  \ref{lem:keyker} once we show Lemma  \ref{lem:det}.

\begin{proof}[{\bf The proof of Lemma  \ref{lem:det}}]
From the expression of $d_G$ we directly compute that
\begin{align*}
\partial_{\theta_1} d_G&=r_1r_2\sin(\theta_1-\theta_2)d_G^{-1},\\
\partial^2_{r_1\theta_1} d_G&=r_2\sin(\theta_1-\theta_2)d_G^{-1}-r_1r_2\sin(\theta_1-\theta_2)(r_1-r_2\cos(\theta_1-\theta_2))d_G^{-3},\\
\partial^2_{\theta_1} d_G&=r_1r_2\cos(\theta_1-\theta_2)d_G^{-1}-(r_1r_2\sin(\theta_1-\theta_2))^2 d_G^{-3},
\end{align*}
and
\begin{align*}
\partial_{r_1} \partial_{\theta_1}^2  d_G&=r_2\cos(\theta_1-\theta_2)d_G^{-1}-r_1r_2\cos(\theta_1-\theta_2)(r_1-r_2\cos(\theta_1-\theta_2))d_G^{-3}\\
&\quad -2r_1(r_2\sin(\theta_1-\theta_2))^2 d_G^{-3}+3(r_1r_2\sin(\theta_1-\theta_2))^2(r_1-r_2\cos(\theta_1-\theta_2)) d_G^{-5},\\
\partial_{\theta_1}^3 d_G&=-r_1r_2\sin(\theta_1-\theta_2)d_G^{-1}-3(r_1r_2)^2\sin(\theta_1-\theta_2)\cos(\theta_1-\theta_2)d_G^{-3}+
3(r_1r_2\sin(\theta_1-\theta_2))^3 d_G^{-5}.
\end{align*}
Hence, after carefully computation, we obtain
\begin{align*}
&\text{det}\left(\begin{array}{cc}\frac{\partial^2 d_G}{\partial r_1\partial\theta_1} & \frac{\partial^2 d_G}{\partial\theta_1^2} \\\frac{\partial^3 d_G}{\partial r_1\partial\theta_1^2} & \frac{\partial^3 d_G}{\partial \theta_1^3}\end{array}\right)
=\partial^2_{r_1\theta_1} d_G \times \partial_{\theta_1}^3 d_G-\partial^2_{\theta_1} d_G\times \partial_{r_1} \partial_{\theta_1}^2  d_G\\
=&-r_1r_2^2d_G^{-2}+r_1^3r_2^4\sin^4(\theta_1-\theta_2) d_G^{-6}+r_1^2r_2^2(r_1-r_2\cos(\theta_1-\theta_2))d_G^{-4}\\
&\qquad-r_1^3r_2^3\sin^2(\theta_1-\theta_2)\cos(\theta_1-\theta_2)(r_1-r_2\cos(\theta_1-\theta_2))d_G^{-6}\\
=&d_G^{-6}\Big(-r_1r_2^2d_G^4+r_1^3r_2^4\sin^4(\theta_1-\theta_2)+r_1^2r_2^2(r_1-r_2\cos(\theta_1-\theta_2))d^2_G\\
&\qquad-r_1^3r_2^3\sin^2(\theta_1-\theta_2)\cos(\theta_1-\theta_2)(r_1-r_2\cos(\theta_1-\theta_2))\Big)\\
=&r_1r_2^3(r_2\cos(\theta_1-\theta_2)-r_2)^3 d_G^{-6}.
\end{align*}

\end{proof}

\subsection{ The lemmas for Proposition \ref{prop:TDj}}
The proofs of the lemmas are modified from \cite[Lemma 4.4, Lemma 4.5, Lemma 4.6 ]{FZZ1}, but we would like to provide the details here for readers' convenience.
This is because that the phase function 
\begin{equation}
\phi(r_1,r_2; s)=|{\bf n}_s|=\sqrt{r_1^2+r_2^2+2r_1r_2\cosh s},
\end{equation}
is same to the phase function of the oscillatory integral in \cite{FZZ1},
and the minor difference lies in the amplitude functions, in which we have to replace $|{\bf n}_s|^{-\frac12}$ by $(2^{-j}+|{\bf n}_s|)^{-\frac32-\delta}$. 

\begin{proof}[{\bf The proof of Lemma \ref{lem:ker-est12}}]: 
If $2^jr_1r_2\lesssim1$, as before, then \eqref{kerD-est} follows by
 using \eqref{equ:ream1} and \eqref{equ:ream2}.
So from now on, we assume $2^jr_1r_2\geq 1$ in the proof.
We first compute that
\begin{equation}
\begin{split}
\partial_s \phi&=\frac{r_1r_2\sinh s}{(r_1^2+r_2^2+2r_1r_2\cosh s)^{1/2}},\\
\partial^2_s\phi&=\frac{r_1r_2\cosh s}{(r_1^2+r_2^2+2r_1r_2\cosh s)^{1/2}}-\frac{(r_1r_2\sinh s)^2}{(r_1^2+r_2^2+2r_1r_2\cosh s)^{3/2}},
\end{split}
\end{equation}
therefore, thanks to the fact $r_1+r_2\sim 1$ which is from the support of $\beta$, we obtain, for $0\leq s\leq 1$
\begin{equation}
\partial_s \phi (0)=0,\quad |\partial^2_s \phi|\geq c \frac{r_1r_2}{r_1+r_2} \gtrsim r_1r_2,
\end{equation}
and for $s\geq 1$
\begin{equation}
\partial_s \phi\geq c \frac{r_1r_2}{r_1+r_2} \gtrsim r_1r_2.
\end{equation}

One can verify that $\partial_s\phi(s)$ is monotonic on the interval $[1,\infty)$ and the facts that
\begin{equation}\label{psi-b-1}
\int_0^\infty |\psi_1'(s)|ds+\int_1^\infty|\psi_2'(s)|ds\lesssim 1.
\end{equation} Indeed, if the derivative hits  $(2^{-j}+|{\bf n}_s|)^{-\frac32-\delta} a(2^j|{\bf n}_s|)$ which is bounded, we again use \eqref{equ:ream1} and \eqref{equ:ream2}  to obtain \eqref{psi-b-1}.
If the derivative hits  $e^{-|\alpha|s}$, it is harmless.
By using the Van der Corput Lemma \ref{lem:VCL}, thus we prove
\begin{equation}
\begin{split}
|\tilde{K}_{D}^{1,j}(r_1,r_2;\theta_1-\theta_2)|\lesssim 2^{-j(\frac 32+\delta)}\big(2^jr_1r_2\big)^{-\frac12}.
\end{split}
\end{equation}
However, if the derivative hits
\begin{equation*}
\begin{split}
 \frac{(e^{-s}-\cos(\theta_1-\theta_2+\pi))\sinh(\alpha s)}{\cosh(s)-\cos(\theta_1-\theta_2+\pi)},
\end{split}
\end{equation*}
it is harmless when $s\in[1,\infty)$ but it becomes more singular near $s=0$
so that we can not verify
\begin{equation*}
\int_0^1|\psi_2'(s)|ds\lesssim 1.
\end{equation*}
Hence, to estimate $\tilde{K}_{D}^{2,j}$, we need  to bound
\begin{equation}\label{psi-2'}
\begin{split}
2^{-j(\frac 32+\delta)}\beta(r_1+r_2) \int_0^1 e^{i2^j\phi(s)} \psi_2(s)\;ds.
\end{split}
\end{equation}
To this aim, when $s$ is close to $0$, we replace
\begin{equation*}
\begin{split}
 \frac{(e^{-s}-\cos(\theta_1-\theta_2+\pi))\sinh(\alpha s)}{\cosh(s)-\cos(\theta_1-\theta_2+\pi)}\sim \frac{(-s+b^2)(\alpha s)}{\frac{s^2}2+b^2}
\end{split}
\end{equation*}
where $b=\sqrt{2}\sin\big(\frac{\theta_1-\theta_2+\pi}2\big)$.
Then for $0\leq s\leq 1$,  uniformly in $b$, we have
\begin{equation}\label{d-v1}
\begin{split}
\Big|\partial_s^{k}\Big(\frac{(e^{-s}-\cos(\theta_1-\theta_2+\pi))\sinh(\alpha s)}{\cosh(s)-\cos(\theta_1-\theta_2+\pi)}-\frac{(-s+b^2)(\alpha s)}{\frac {s^2}2+b^2}\Big)\Big|\lesssim 1,\quad k=0,1
\end{split}
\end{equation}
which has been verified in \cite[Lemma 5.1]{FZZ1}.

Therefore  the difference term is accepted by using Van der Corput Lemma \ref{lem:VCL} again. Hence, instead of \eqref{psi-2'}, we need to control
\begin{equation}\label{psi-2'1}
\begin{split}
2^{-j(\frac 32+\delta)}\beta(r_1+r_2) \int_0^1 e^{i2^j\phi(s)}(2^{-j}+|{\bf n}_s|)^{-\frac32-\delta} a(2^j|{\bf n}_s|)\frac{(-s+b^2)(\alpha s)}{\frac{s^2}2+b^2}\;ds,
\end{split}
\end{equation}
which is bounded by
\begin{equation}
\begin{split}
\lesssim 2^{-j(\frac 32+\delta)}\beta(r_1+r_2)& \Big(\Big|\int_0^1 e^{i2^j\phi(s)} (2^{-j}+|{\bf n}_s|)^{-\frac32-\delta} a(2^j|{\bf n}_s|)\;ds\Big|\\
&+\Big|\int_0^1 e^{i2^j\phi(s)} (2^{-j}+|{\bf n}_s|)^{-\frac32-\delta} a(2^j|{\bf n}_s|)\, \frac{b^2}{\frac{s^2}2+b^2}\;ds\Big|\\
&+\Big|\int_0^1 e^{i2^j\phi(s)} (2^{-j}+|{\bf n}_s|)^{-\frac32-\delta} a(2^j|{\bf n}_s|)\, \frac{sb^2}{\frac{s^2}2+b^2}\;ds\Big|\Big).
\end{split}
\end{equation}
Now we can bound
\begin{equation}
\begin{split}
\beta(r_1+r_2)& \Big(\int_0^1 \Big|\partial_s\big((2^{-j}+|{\bf n}_s|)^{-\frac32-\delta} a(2^j|{\bf n}_s|)\big)\Big|\;ds\\
&+\int_0^1 \Big|\partial_s\Big((2^{-j}+|{\bf n}_s|)^{-\frac32-\delta} a(2^j|{\bf n}_s|)\, \frac{b^2}{\frac{s^2}2+b^2}\Big)\Big|\;ds\\
&+\int_0^1\Big|\partial_s\Big((2^{-j}+|{\bf n}_s|)^{-\frac32-\delta} a(2^j|{\bf n}_s|)\, \frac{sb^2}{\frac{s^2}2+b^2}\Big)\Big|\;ds\Big)\lesssim 1,
\end{split}
\end{equation}
where we make the change of variable $s\to bs$ to get rid of the issue when both $s, b\to 0$.
Then by using Van der Corput Lemma \ref{lem:VCL} again, we obtain
\begin{equation}
\begin{split}
\Big|2^{-j(\frac 32+\delta)}\beta(r_1+r_2) \int_0^1 e^{i2^j\phi(s)}(2^{-j}+|{\bf n}_s|)^{-\frac32-\delta} &a(2^j|{\bf n}_s|)\frac{(-s+b^2)(\alpha s)}{\frac{s^2}2+b^2}\;ds\Big|\\
&\lesssim 2^{-j(\frac 32+\delta)} (2^jr_1r_2)^{-\frac12}.
\end{split}
\end{equation}
Therefore, we prove Lemma \ref{lem:ker-est12}.
\end{proof}

\begin{proof}[{\bf The proof of Lemma \ref{lem:ker-est3}:}] We first prove \eqref{kerDe3-est}. Arguing similarly as Lemma \ref{lem:ker-est12},
it suffices to show, when $r_1+r_2\sim 1$,
\begin{equation}
\begin{split}
\int_0^1\big|\partial_s \psi_{3, e}(r_1,r_2,\theta_1,\theta_2; s)\big|\, ds\lesssim 1,
\end{split}
\end{equation}
uniformly in $r_1,r_2,\theta_1,\theta_2$. For our purpose, when $r_1+r_2\sim 1$, we have
\begin{align}\nonumber
&\int_0^1\Big|\partial_s\Big[\Big((2^{-j}+|{\bf n}_s|)^{-\frac32-\delta} a(2^j|{\bf n}_s|)\Big)\\
&\qquad\Big(\frac{\sin(\theta_1-\theta_2+\pi)\cosh(\alpha s)}{\cosh(s)-\cos(\theta_1-\theta_2+\pi)}
-\frac{\sin(\theta_1-\theta_2+\pi)}{\frac{s^2}2+2\sin^2\big(\tfrac{\theta_1-\theta_2+\pi}2\big)}\Big)\Big]\Big|\,ds\lesssim 1\label{d-v2}
 ,
\end{align}
and
\begin{align}\nonumber
&\int_0^1\Big|\partial_s\Big[\Big((2^{-j}+|{\bf n}_s|)^{-\frac32-\delta} a(2^j|{\bf n}_s|)\\&\qquad-(r_1+r_2)^{-\frac32-\delta}a(2^j(r_1+r_2))\Big)\frac{\sin(\theta_1-\theta_2+\pi)}{\frac{s^2}2+2\sin^2
\big(\tfrac{\theta_1-\theta_2+\pi}2\big)}\Big]\Big|\, ds\lesssim 1,\label{d-v3}
\end{align}
which can be verified by modifying \cite[Lemma 5.1]{FZZ1} and by replacing $|{\bf n}_s|^{-\frac12}$ by $(2^{-j}+|{\bf n}_s|)^{-\frac32-\delta}$.

We next prove \eqref{kerDm3-est}. We will use  the Morse Lemma to write the phase function
in term of quadratic forms via variable change.
Let $$\bar{\varphi}(s)=\frac{\phi(s)-(r_1+r_2)}{r_1r_2}=\frac{\big(r_1^2+r_2^2+2r_1r_2\cosh s\big)^{\frac12}}{r_1r_2}-\frac{r_1+r_2}{r_1r_2},$$
then $\bar{\varphi}(0)=\bar{\varphi}'(0)=0$ and
$$\bar{\varphi}''(0)=\frac{1}{r_1+r_2}\neq 0.$$
Let
\begin{equation} \label{gs}
g(s)=\int_0^1(1-t)\bar{\varphi}''(ts)dt,
\end{equation}
then we can write $$\bar{\varphi}(s)=g(s)s^2.$$
We write
$$\phi(s)-(r_1+r_2)=r_1r_2\bar{\varphi}(s)=r_1r_2g(s)s^2.$$
Choose $\tilde{s}$ such that 
$$\frac{\tilde{s}^2}{r_1+r_2}=g(s)s^2,$$
then our aim is to estimate 
\begin{align*}
&\big|e^{-i2^j(r_1+r_2)}\tilde{K}_{D,m}^{3,j}(r_1,r_2;\theta_1-\theta_2)- H(r_1,r_2;\theta_1-\theta_2)\big|\\ \nonumber
&\leq 2^{-j(\frac 32+\delta)} \beta(r_1+r_2) \Big(\Big|\int_0^\infty e^{i2^jr_1r_2 g(s)s^2}\psi_{3, m}(s)\;ds-(r_1+r_2)^{\frac12}\int_0^\infty e^{i 2^j\frac{r_1r_2}{r_1+r_2} \tilde{s}^2} \psi_{3, m} (\tilde{s})\;d\tilde{s}\Big|\Big).
\end{align*}
Note that $g(0)=\bar{\varphi}''(0)=(r_1+r_2)^{-1}\neq 0$, we make the change of
variables
$$\tilde{s}=\frac1{\sqrt{g(0)}}g(s)^{\frac12}s.$$
Then we obtain
\begin{equation}
\frac{d\tilde{s}}{ds}=\frac1{\sqrt{g(0)}}\Big(\frac12 g^{-\frac12}(s)g'(s) s+g^{\frac12}(s)\Big):=F(s).
\end{equation}
It suffices to estimate 
\begin{align*}
&\big|e^{-i2^j(r_1+r_2)}\tilde{K}_{D,m}^{3,j}(r_1,r_2;\theta_1-\theta_2)- H(r_1,r_2;\theta_1-\theta_2)\big|\\ \nonumber
&\leq 2^{-j(\frac 32+\delta)} \beta(r_1+r_2) \Big|\int_0^\infty e^{i2^jr_1r_2 g(s)s^2}\Big(\psi_{3, m}(s)-\psi_{3, m}(\tilde{s})F(s)\Big)\;ds\Big|\\
&\leq 2^{-j(\frac 32+\delta)} \beta(r_1+r_2) \Big(\Big|\int_0^1 e^{i2^jr_1r_2 g(s)s^2}\Big(\psi_{3, m}(s)-\psi_{3, m}(\tilde{s})F(s)\Big)\;ds\Big|\\
&\quad + \Big|\int_1^\infty e^{i2^jr_1r_2 g(s)s^2}\Big(\psi_{3, m}(s)-\psi_{3, m}(\tilde{s})F(s)\Big)\;ds\Big|\Big)\\
&:=I+II.
\end{align*}
We first consider $II$. When $s\in[1,+\infty)$,  there is no critical point for the phase functions and 
the derivative of the phase function $\phi=g(s)s^2$ is monotonic. By using Van der Corput Lemma \ref{lem:VCL}, we have
\begin{align}
II\lesssim 2^{-j(\frac 32+\delta)}\big(1+2^jr_1r_2\big)^{-1} .
\end{align}
We next consider $I$.
Furthermore, we observe that 
$$\bar{\varphi}'''(0)=0,\implies F'(0)=0,$$
then by the Taylor expansion, we have
$$F(s)=F(0)+F'(0)s+\frac12 F''(\theta s)s^2=1+\frac12 F''(\theta s)s^2, \quad 0\leq \theta\leq 1.$$
Therefore, we obtain
$$\Big(\psi_{3, m}(s)-\psi_{3, m}(\tilde{s})F(s)\Big)=\Big(\psi_{3, m}(s)-\psi_{3, m}(\tilde{s})\Big)-\frac12\psi_{3, m}(\tilde{s}) F''(\theta s)s^2.$$
By using Van der Corput Lemma \ref{lem:VCL} as before, the first term is bounded by
$$I\lesssim 2^{-j(\frac 32+\delta)}\big(1+2^jr_1r_2\big)^{-\frac12},$$
if we could verify that there exists a constant $C$ such that
\begin{align}\label{e:vercond1}
\Big| \big(\psi_{3, m}(1)-\psi_{3, m}(\tilde{s}_0)\big)\Big|+|\frac12\psi_{3, m}(\tilde{s}_0) F''(\theta )|\leq C, \quad \tilde{s}_0:=\frac1{\sqrt{g(0)}}g(s)^{\frac12}s\big|_{s=1},
\end{align}
and when $s\in[0, 1]$ and $\tilde{s}\in[0,\tilde{s}_0]$
\begin{align}\label{e:vercond1'}
&\int_0^1\Big|\frac{d}{ds}\Big(\psi_{3, m}(s)- \psi_{3, m}(\tilde{s})\Big)\Big|+\Big|\frac{d}{ds}\Big(\psi_{3, m}(\tilde{s}) F''(\theta s)s^2\Big)\Big|\, ds\leq C.
\end{align}
which can be verified as \cite[Lemma 5.3]{FZZ1} did.
Therefore, as desired, the difference is bounded by
$$\big|e^{-i2^j(r_1+r_2)}\tilde{K}_{D,m}^{3,j}(r_1,r_2;\theta_1-\theta_2)-H(r_1,r_2;\theta_1-\theta_2)\big|\lesssim 2^{-j(\frac 32+\delta)}\big(1+2^jr_1r_2\big)^{-\frac12}.$$
\end{proof}

\begin{center}

\end{center}

\end{document}